\definecolor{cblack}{rgb}{0,0,0}
\definecolor{cblue}{rgb}{0.121569,0.466667,0.705882}    
\definecolor{corange}{rgb}{1.000000,0.498039,0.054902}  
\definecolor{cgreen}{rgb}{0.172549,0.627451,0.172549}   
\definecolor{cred}{rgb}{0.839216,0.152941,0.156863}     
\definecolor{cpurple}{rgb}{0.580392,0.403922,0.741176}  
\definecolor{cbrown}{rgb}{0.549020,0.337255,0.294118}   
\definecolor{cpink}{rgb}{0.890196,0.466667,0.760784}
\definecolor{cgray}{rgb}{0.498039,0.498039,0.498039}
\definecolor{cgreen2}{rgb}{0.7372549019607844, 0.7411764705882353, 0.13333333333333333}
\newtheorem{theorem}{Theorem}[section]
\newtheorem{remark}[theorem]{Remark}
\newtheorem{lemma}[theorem]{Lemma}
\newtheorem{definition}[theorem]{Definition}
\newtheorem{proposition}[theorem]{Proposition}
\newtheorem{corollary}[theorem]{Corollary}
\newtheorem{conjecture}[theorem]{Conjecture}
\theoremstyle{plain} 
\newcommand{\thistheoremname}{}
\newtheorem*{genericthm}{\thistheoremname}
\newcommand{\Erdos}{Erd\H{o}s}
\newcommand{\Lovasz}{Lov\'{a}sz}
\newcommand{\Renyi}{R\'{e}nyi}
\newcommand{\what}{\widehat}
\newcommand{\wtil}{\widetilde}
\renewcommand{\emptyset}{\varnothing}
\def\moverlay{\mathpalette\mov@rlay}
\def\mov@rlay#1#2{\leavevmode\vtop{%
   \baselineskip\z@skip \lineskiplimit-\maxdimen
   \ialign{\hfil$\m@th#1##$\hfil\cr#2\crcr}}}
\newcommand{\charfusion}[3][\mathord]{
    #1{\ifx#1\mathop\vphantom{#2}\fi
        \mathpalette\mov@rlay{#2\cr#3}
      }
    \ifx#1\mathop\expandafter\displaylimits\fi}
\newcommand{\CC}{\mathbb{C}}
\newcommand{\FF}{\mathbb{F}}
\newcommand{\RR}{\mathbb{R}}
\renewcommand{\SS}{\mathbb{S}}
\newcommand{\ZZ}{\mathbb{Z}}
\DeclareSymbolFont{bbold}{U}{bbold}{m}{n}
\DeclareSymbolFontAlphabet{\mathbbold}{bbold}
\newcommand{\One}{\mathbbold{1}}
\newcommand{\be}{\bm e}
\newcommand{\bx}{\bm x}
\newcommand{\bA}{\bm A}
\newcommand{\bB}{\bm B}
\newcommand{\bD}{\bm D}
\newcommand{\bE}{\bm E}
\newcommand{\bH}{\bm H}
\newcommand{\bP}{\bm P}
\newcommand{\bS}{\bm S}
\newcommand{\bV}{\bm V}
\newcommand{\one}{\bm{1}}
\newcommand{\sI}{\mathcal{I}}
\newcommand{\Mod}[1]{\ (\mathrm{mod}\ #1)}
\DeclareSymbolFont{sfoperators}{OT1}{cmss}{m}{n}
\DeclareSymbolFontAlphabet{\mathsf}{sfoperators}
\renewcommand{\operator@font}{\mathgroup\symsfoperators}
\DeclareMathOperator{\sym}{sym}
\DeclareMathOperator{\poly}{poly}
\DeclareMathOperator{\Tr}{Tr}
\DeclareMathOperator{\rank}{rank}
\DeclareMathOperator{\Unif}{Unif}
\newcommand{\Ber}{\mathsf{Ber}}
\newcommand{\MANOVA}{\mathsf{MANOVA}}
\newcommand{\KM}{\mathsf{KM}}
\DeclareMathOperator*{\mindeg}{min\,deg}
\DeclareMathOperator*{\maxdeg}{max\,deg}
\newcommand{\eps}{\varepsilon}
\newcommand\numberthis{\addtocounter{equation}{1}\tag{\theequation}}
\title{Spectral pseudorandomness and the road to improved clique number bounds for Paley graphs}
\date{March 29, 2023}
\author{Dmitriy Kunisky\thanks{Email: \texttt{dmitriy.kunisky@yale.edu}. Partially supported by ONR Award N00014-20-1-2335 and a Simons Investigator Award to Daniel Spielman.}}
\affil{Department of Computer Science, Yale University}
\begin{document}

\maketitle

\thispagestyle{empty}

\begin{abstract}
    We study subgraphs of Paley graphs of prime order $p$ induced on the sets of vertices extending a given independent set of size $a$ to a larger independent set.
    Using a sufficient condition proved in the author's recent companion work, we show that a family of character sum estimates would imply that, as $p \to \infty$, the empirical spectral distributions of the adjacency matrices of any sequence of such subgraphs have the same weak limit (after rescaling) as those of subgraphs induced on a random set including each vertex independently with probability $2^{-a}$, namely, a Kesten-McKay law with parameter $2^a$.
    We prove the necessary estimates for $a = 1$, obtaining in the process an alternate proof of a character sum equidistribution result of Xi~(2022), and provide numerical evidence for this weak convergence for $a \geq 2$.
    We also conjecture that the minimum eigenvalue of any such sequence converges (after rescaling) to the left edge of the corresponding Kesten-McKay law, and provide numerical evidence for this convergence.
    Finally, we show that, once $a \geq 3$, this (conjectural) convergence of the minimum eigenvalue would imply bounds on the clique number of the Paley graph improving on the current state of the art due to Hanson and Petridis~(2021), and that this convergence for all $a \geq 1$ would imply that the clique number is $o(\sqrt{p})$.
\end{abstract}

\clearpage

\thispagestyle{empty}

\tableofcontents

\clearpage

\setcounter{page}{1}
\pagestyle{plain}

\section{Introduction}

Let $p \equiv 1 \Mod{4}$ be prime.
Let $\FF_p$ be the finite field of order $p$, identified with $\ZZ / (p\ZZ)$ (the integers modulo $p$), let $\FF_p^\times$ be its multiplicative group (of non-zero elements), and let $\SS_p < \FF_p^{\times}$ be the subgroup of non-zero quadratic residues (those $x \neq 0$ such that $x \equiv y^2 \Mod{p}$ for some $y$).
The \emph{Paley graph}, defined on vertex set $\FF_p$, has $x \sim y$ if and only if $x - y \in \SS_p$.

The graphs $G_p$ are thought to be \emph{pseudorandom}, behaving in many ways like \Erdos-\Renyi~(ER) random graphs with edge probability $\frac{1}{2}$, this probability chosen to match $G_p$ being $(\frac{p - 1}{2})$-regular.
For example, the $G_p$ satisfy the ``quasi-randomness'' conditions described in \cite{CGW-1989-QuasiRandomGraphs}; in particular, the extreme eigenvalues scale roughly similarly to those (typically) of an ER graph, and the number of appearances of any fixed graph $G^{(0)}$ as an induced subgraph in $G_p$ as $p \to \infty$ scales as the number of appearances of $G^{(0)}$ (typically) in an ER graph.
Going on this intuition, the $G_p$ are believed to be useful for several \emph{derandomization} problems.
Namely, it is believed that deterministic matrices satisfying the restricted isometry property can be constructed from the closely related \emph{Paley equiangular tight frames} \cite{BFMW-2013-DeterministicRIP,BMM-2017-ConditionalRIP}, and the $G_p$ themselves are believed to be explicit constructions of tight lower bounds on the diagonal Ramsey numbers.
However, it seems difficult to prove such claims, as they require much more precise number-theoretic analysis than the above notions of pseudorandomness.

In this paper, we study new proof techniques for controlling the quantities involved in the latter Ramsey theory application.\footnote{See Appendix~\ref{app:rip} for discussion of limitations of our approach for studying the restricted isometry property.}
For a graph $G$, we write $\omega(G)$ for its clique number, $\overline{G}$ for its complement, and $\alpha(G) \colonequals \omega(\overline{G})$ for its independence number.
For the purposes of Ramsey theory, we are interested in showing that $\omega(G_p)$ and $\alpha(G_p)$ are both small (see Section~\ref{sec:related} for more discussion).
For the Paley graph, $G_p$ is isomorphic to $\overline{G_p}$, so $\alpha(G_p) = \omega(G_p)$ and there is only one number to control.
The literature usually discusses the clique number of the Paley graph, but here we will discuss the independence number instead, both for mild notational convenience later and to draw an analogy with similar problems below.

Consequences for Ramsey theory aside, determining $\alpha(G_p)$ is a prominent problem in number theory; for example, $\alpha(G_p)$ is an upper bound on the smallest quadratic non-residue modulo $p$, determining which is another long-standing open problem.
Through this connection, \cite{GR-1990-LowerBoundsLeastQNR} showed that, for infinitely many primes $p$,
\begin{equation}
    \alpha(G_p) \geq \log p \log\log\log p.
\end{equation}
This is slightly larger than the typical value for an ER graph on $p$ vertices, which is of order $\Theta(\log p)$.
Conditional on the Generalized Riemann Hypothesis, the $\log\log\log p$ term can also be improved to $\log \log p$ \cite[Theorem 13.5]{Montgomery-1971-MultiplicativeNumberTheory}, and this is compatible with a more detailed random model of $G_p$ \cite{Mrazovic-2017-RandomModelPaleyGraph}.
More generally, based on such results as well as numerical computations of $\alpha(G_p)$, the following conjecture is widely believed.
\begin{conjecture}[\cite{Shearer-1986-LowerBoundsDiagonalRamsey,BMR-2013-SquaresDifferenceSets,Yip-2022-CliqueNumberPaleyPrimePower,KY-2022-PaleyDegree4SOS}]
    \label{conj:clique-polylog}
    $\alpha(G_p) = O(\mathsf{polylog}(p))$.
\end{conjecture}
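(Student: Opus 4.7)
The plan is to lift the paper's spectral pseudorandomness program into an iterated argument. For an independent set $I \subseteq \FF_p$ of $G_p$, let $N(I)$ denote the set of vertices non-adjacent (in $G_p$) to every element of $I$; by construction, the induced subgraph $G_p[N(I)]$ satisfies $\alpha(G_p[N(I)]) \geq \alpha(G_p) - |I|$, while $|N(I)| \sim p \cdot 2^{-|I|}$ by a standard Jacobi-sum estimate.

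First I would extend the character sum estimates proved here for $a = 1$ to all fixed $a \geq 1$, obtaining the Kesten-McKay weak limit of the empirical spectral distribution of $G_p[N(I)]$. I would then upgrade weak convergence to convergence of the minimum eigenvalue, so that a Hoffman-type spectral inequality yields
\[
\alpha(G_p[N(I)]) \leq (1+o(1))\, c_a\, \sqrt{|N(I)|}
\]
for the constant $c_a$ coming from the left edge of the Kesten-McKay distribution with parameter $2^a$. Combined with $\alpha(G_p[N(I)]) \geq \alpha(G_p) - a$, this already gives the $o(\sqrt{p})$ bound indicated in the abstract.

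To reach $\mathsf{polylog}(p)$, I would then let $a$ grow with $p$, choosing $a = \Theta(\log p)$ so that $\sqrt{p/2^a}$ becomes polylogarithmic in $p$. This yields
\[
\alpha(G_p) \leq a + (1+o(1))\, c_a \sqrt{p/2^a} = \mathsf{polylog}(p),
\]
provided the spectral convergence statement holds uniformly in $a$ out to $a = \Theta(\log p)$, rather than only pointwise in $a$ with $p \to \infty$.

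The hard part is precisely this uniformity. The number of moment identities needed to pin down the Kesten-McKay limit grows combinatorially in $a$, and each identity demands cancellation of a character sum over products of $a$ Legendre symbols. Controlling the cumulative error terms so that the rescaled minimum eigenvalue still tracks the left edge once $|N(I)|$ has shrunk to polylogarithmic scale requires a substantially more quantitative pseudorandomness statement about Paley graphs than any fixed-$a$ theorem provides, and is likely beyond the reach of Weil-type character sum estimates alone.
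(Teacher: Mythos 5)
This statement is an open conjecture, not a theorem of the paper: the authors state it with citations to prior work as something ``widely believed,'' and the paper's own conditional program (Theorem~\ref{thm:clique-bound}) only reaches $\omega(G_p) = o(\sqrt p)$ even assuming Conjecture~\ref{conj:min-eval} at every fixed degree $a$. There is no proof in the paper to compare against, and your proposal does not supply one; it is a roadmap that correctly identifies the main obstruction (uniformity of the spectral convergence in $a$) while still leaving the conjecture open.

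Beyond that honest disclaimer, the plan also understates the gap. You propose taking $a = \Theta(\log p)$ so that $\sqrt{p/2^a}$ becomes polylogarithmic, but the approach degenerates well before that regime for a reason independent of the spectral conjectures. Proposition~\ref{prop:loc-graph-vertices} (and likewise Proposition~\ref{prop:loc-graph-degrees}), which are needed to feed the Hoffman-type inequality of Proposition~\ref{prop:haemers}, carry Weil-bound error terms of size $\Theta(a\sqrt{p})$, whereas the main term is $p/2^a$. Once $a \gtrsim \tfrac12 \log_2 p$, the error term dominates and these estimates say nothing about $|V(G_{p,I})|$, $\mindeg(G_{p,I})$, or $\maxdeg(G_{p,I})$. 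Consequently, even granting convergence of $\lambda_{\min}(\bA_{G_{p,I}})$ to the Kesten-McKay edge \emph{uniformly} in $a$ up to that scale, the best this pipeline could deliver is roughly $\alpha(G_p) = O(p^{1/4+\eps})$, not $\mathsf{polylog}(p)$. Reaching polylogarithmic bounds would require not only the far harder edge-convergence statement with uniform error terms (the ``cancellations among necklace sums'' obstruction the paper lays out in Section~\ref{sec:challenges-eval}), but also fundamentally sharper control of the localization sizes themselves---and controlling $|V(G_{p,I})|$ for $|I|$ of order $\log p$ is itself a statement close in strength to the independence-number bound you are trying to prove, so there is a real circularity in extrapolating the fixed-$a$ machinery this far.
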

\noindent
As we detail in Section~\ref{sec:related}, proving this would yield a substantial improvement on the best known explicit construction of Ramsey graphs (ones giving strong lower bounds on the diagonal Ramsey numbers).

In stark contrast to Conjecture~\ref{conj:clique-polylog}, the best known upper bound on $\alpha(G_p)$, due to Hanson and Petridis, is as follows.
\begin{theorem}[\cite{HP-2021-SumsetsPaleyClique,BSW-2021-ProductDirectionsGaloisPaley}]
    \label{thm:hp-bound}
    $\alpha(G_p) \leq \frac{\sqrt{2p - 1} + 1}{2} \leq \frac{1}{\sqrt{2}}\sqrt{p} + 1$.
\end{theorem}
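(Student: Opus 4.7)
The classical Hoffman bound, applied to $G_p$ using its eigenvalues $\{(p-1)/2,(-1\pm\sqrt{p})/2\}$, yields only $\alpha(G_p) \leq \sqrt{p}$, so the factor-$\sqrt{2}$ improvement demanded by the stated bound cannot come from second-order spectral information alone. My plan is to use the polynomial method over $\mathbb{F}_p$, leveraging that the quadratic residue character $\chi(x) = x^{(p-1)/2}$ is itself a polynomial of degree only $(p-1)/2$.

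Let $A$ be an independent set of size $n$. By translation invariance of $G_p$, I may assume $0 \in A$, in which case $A \setminus \{0\}$ lies entirely in the nonzero non-residues $N_p$, which is precisely the vanishing set of $g(X) \colonequals X^{(p-1)/2} + 1 \in \mathbb{F}_p[X]$. The natural object of study is then $f(X) \colonequals \prod_{a \in A \setminus \{0\}}(X - a)$, a polynomial of degree $n-1$ which divides $g(X)$; already this gives the trivial bound $n - 1 \leq (p-1)/2$, which is quadratically too weak.

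To extract more, I would exploit the vertex-transitivity of $G_p$: for every $b \in A$, the set $(A - b) \setminus \{0\}$ again lies in $N_p$, producing a whole family $\{f_b(X)\}_{b \in A}$ of $n$ polynomials of degree $n-1$, each dividing $g(X)$. Working inside the quotient ring $R \colonequals \mathbb{F}_p[X]/(g(X))$, which has $\mathbb{F}_p$-dimension $(p-1)/2$, I would then study the span of the $f_b$ (or, dually, of the cofactors $g/f_b$) and try to extract a rank or dimension constraint from the combined divisibility conditions.

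The main obstacle, where I expect the bulk of the difficulty to lie, is squeezing out the optimal quadratic bound $n(n-1) \leq (p-1)/2$, which is algebraically equivalent to the stated inequality. The governing heuristic is transparent: each ordered pair $(a,a')$ of distinct elements of $A$ contributes a nonzero difference $a - a' \in N_p$, so if these differences were all distinct the bound would be immediate. They need not be, so the polynomial method has to supply a more subtle form of injectivity---most likely via a derivative- or Wronskian-style identity among the $f_b$ that forces a span of $\mathbb{F}_p$-dimension at least $n(n-1)$ inside $R$, perhaps by incorporating multiplicities in the spirit of Croot--Lev--Pach. Calibrating the identity so that precisely the constant $1/\sqrt{2}$ (and not a suboptimal $1$) emerges is the delicate algebraic step I anticipate.
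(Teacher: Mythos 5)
The paper does not prove Theorem~\ref{thm:hp-bound}; it is cited from \cite{HP-2021-SumsetsPaleyClique,BSW-2021-ProductDirectionsGaloisPaley}, so there is no internal proof to compare against. Evaluating your sketch on its own terms: the framing is right. You correctly reduce the stated inequality to $n(n-1) \leq (p-1)/2$, correctly place $A \setminus \{0\}$ in the zero set of $g(X) = X^{(p-1)/2} + 1$, and correctly observe that each translate $(A-b)\setminus\{0\}$ produces a degree-$(n-1)$ divisor $f_b$ of $g$. That much matches the opening moves of the Hanson--Petridis argument.

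The gap is in the closing mechanism, and it is genuine. You propose to ``force a span of $\mathbb{F}_p$-dimension at least $n(n-1)$'' from the family $\{f_b\}_{b \in A}$ inside $R = \mathbb{F}_p[X]/(g)$. But there are only $n$ polynomials $f_b$ (and $n$ cofactors $g/f_b$), so their span has dimension at most $n$, never $n(n-1)$; the linear-algebraic object you describe cannot carry the quadratic information by itself. The Wronskian/derivative idea does not rescue this directly either, since derivatives of $f_b$ have no a priori divisibility relation to $g$. The invocation of Croot--Lev--Pach is not a match for this problem: CLP bounds slicing rank of a diagonal tensor, whereas what is needed here is Stepanov's method --- constructing a \emph{single} auxiliary polynomial, of the shape $\sum_{a \in A} c_a (X-a)^{D}$ for a suitable exponent $D$ near $(p-1)/2$, choosing the $c_a$ so that it vanishes to order roughly $n$ at each of $n$ prescribed points while remaining nonzero, and then comparing degree against the total multiplicity $\approx n^2$. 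The quadratic count comes from the high-order vanishing of one polynomial, not from the dimension of a span of $n$ polynomials. The BSW proof is different again (a directions/counting argument in an affine Galois plane), but likewise does not run through the span of the $f_b$. Your heuristic that distinct ordered differences would give $n(n-1) \leq (p-1)/2$ ``for free'' is correct and correctly flagged as not literally true; but the replacement you propose for it is structurally wrong, and you acknowledge not knowing how to calibrate it. As written, the proposal sets up the right objects but does not contain, or correctly point toward, the key step.
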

\noindent
This is only a modest improvement on the ``trivial'' bound $\alpha(G_p) \leq \sqrt{p}$, which has many proofs: $\sqrt{p}$ is the value of both the spectral ``Hoffman bound'' and the \Lovasz\ $\vartheta$ function bounds on $\alpha(G_p)$ \cite[Theorem 8]{Lovasz-1979-ShannonCapacityGraph}, and one may also obtain the same bound combinatorially \cite[Lemma 1.2]{Yip-2022-CliqueNumberPaleyPrimePower}.

\subsection{Relaxation and localization: two bounding strategies}

Beyond our lack of tight quantitative bounds on $\alpha(G_p)$, this state of affairs is conceptually disappointing because the known proofs of Theorem~\ref{thm:hp-bound} use in a deep way the algebraic structure of $G_p$.
It would be more satisfying to treat $\alpha(G_p)$ in the same framework as other ``packing'' or independence number problems arising in extremal combinatorics and coding theory.
In particular, \emph{convex relaxation} bounds on $\alpha(G_p)$---bounds by linear (LP) or semidefinite (SDP) programs---have been useful for bounding independence numbers of many other graphs of interest \cite{Delsarte-1972-LPUnrestrictedCodes,Lovasz-1979-ShannonCapacityGraph,CE-2003-SpherePackingLP, Schrijver-2005-CodeBoundsSDP, BV-2008-KissingNumberSDP, LV-2015-SDPHierarchyPacking}.
Can techniques based on convex relaxation improve on the state-of-the-art bound on $\alpha(G_p)$?

It is unclear whether to expect this approach to succeed.
On the one hand, a long line of work has shown that, for ER random graphs $G$, increasingly sophisticated convex relaxations of $\alpha(G)$ do \emph{not} improve substantially on simple spectral bounds (which may be viewed as very simple instances of convex relaxation).
Perhaps most prominent among these results are those showing such lower bounds for the otherwise quite powerful \emph{sum-of-squares (SOS)} hierarchy of semidefinite programs \cite{MW-2013-PlantedClique,MPW-2015-PlantedClique, RS-2015-SOS4PlantedClique, DM-2015-SOSPlantedClique, HKPRS-2018-PlantedCliqueSOS4, BHKKMP-2019-PlantedClique, Pang-2021-PlantedCliqueSOSExact}.
Thus, if we believe in sufficiently strong pseudorandomness of the Paley graph, we should not expect convex relaxations of $\alpha(G_p)$ to improve much on the $\sqrt{p}$ bound.
On the other hand, the work of \cite{GLV-2009-BlockDiagonalSDPPaleyClique} and the author's recent results with Yu \cite{KY-2022-PaleyDegree4SOS} suggest that such convex relaxations (in particular, the degree~4 SOS relaxation) might after all improve on the bound of Theorem~\ref{thm:hp-bound} and even prove that $\alpha(G_p) = O(p^{\frac{1}{2} - \eps})$ for some $\eps > 0$.
A possible reason for this is that the spectrum of $G_p$ is sufficiently different from that of an ER random graph: the eigenvalues of an ER graph have the semicircle distribution of random matrix theory, while $G_p$ is strongly regular and has only three distinct eigenvalues, a very differently-shaped empirical spectral distribution.\footnote{See the discussion of graph matrix norm bounds in \cite{KY-2022-PaleyDegree4SOS} for how to convert this discrepancy into evidence that higher degrees of SOS may give improved bounds on $\alpha(G_p)$.}

The recent work of \cite{MMP-2019-LPCliquesPaley} contributed a complementary idea for bounding $\alpha(G_p)$, which we call the \emph{localization} approach.\footnote{The localization approach is somewhat similar to the tools developed by Elias-Bassalygo and Kalai-Linial for bounding the independence numbers of interest in coding theory \cite{Bassalygo-1965-UpperBoundsCodes, Berlekamp-1968-AlgebraicCodingTheory, KL-1995-DistanceDistributionCodes}.}
Let $\sI_a(G)$ denote the set of independent sets of size $a$ in a graph $G$.
The idea is to bound $\alpha(G)$ by enumerating all $I \in \sI_a(G)$, and then to consider the independence number of graphs induced on all vertices extending $I$ to a larger independent set.
The basic tool is the following.
\begin{definition}[Localization]
    For $X \subseteq V(G_p)$, let $G_{p, X}$ denote the subgraph of $G_p$ induced on the vertices not adjacent to any vertex of $X$ (not including vertices in $X$ itself).
    We call $G_{p, X}$ a \emph{localization} of $G$ of \emph{degree} $|X|$.
\end{definition}

\begin{proposition}[Localization bound]
    \label{prop:loc}
    For any $a \leq \alpha(G_p)$,
    \begin{equation}
        \label{eq:loc}
        \alpha(G_p) = a + \max_{I \in \sI_a(G_p)} \omega(G_{p, I}).
    \end{equation}
\end{proposition}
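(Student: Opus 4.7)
My plan is to establish the identity in two stages: first, derive the standard $\alpha$-version of the localization decomposition, and then use the self-complementarity of the Paley graph to convert it into the stated $\omega$-version.

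Stage one proceeds by a direct bijective argument. For any $I \in \sI_a(G_p)$ and any $S \subseteq V(G_{p,I})$, the union $I \cup S$ is independent in $G_p$ if and only if $S$ is independent in $G_{p,I}$, since by construction every vertex of $V(G_{p,I})$ is non-adjacent in $G_p$ to every vertex of $I$, so the only potentially forbidden edges lie inside $S$. Maximizing over $S$ yields $\alpha(G_p) \geq a + \alpha(G_{p,I})$ for each $I$. For the reverse inequality, any maximum independent set $J$ of $G_p$ contains a size-$a$ subset $I \in \sI_a(G_p)$ (valid because $a \leq \alpha(G_p)$), and $J \setminus I \subseteq V(G_{p,I})$ is independent of size $\alpha(G_p) - a$, so $\alpha(G_{p,I}) \geq \alpha(G_p) - a$. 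Combining gives the auxiliary identity $\alpha(G_p) = a + \max_{I \in \sI_a(G_p)} \alpha(G_{p,I})$.

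Stage two converts $\alpha(G_{p,I})$ to $\omega(G_{p,I})$ via self-complementarity. Fix any non-residue $c \in \FF_p^\times$ and set $\phi(x) := cx$. Multiplication by $c$ sends $\SS_p$ bijectively onto the set of non-residues, so $\phi$ swaps edges and non-edges of $G_p$ and is therefore a graph isomorphism $G_p \to \overline{G_p}$. In particular $\alpha(G_p) = \omega(G_p)$, and $\phi$ biject $\sI_a(G_p)$ with the set of $a$-cliques of $G_p$ via $I \mapsto \phi(I)$. Applying the clique analog of stage one gives $\omega(G_p) = a + \max_K \omega(G_p[N_{G_p}(K) \setminus K])$ with $K$ ranging over $a$-cliques. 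Transporting this along $\phi$ reindexes the outer maximum over $I \in \sI_a(G_p)$ and, under an appropriate identification of induced subgraphs, replaces the inner $\omega$-quantity by $\omega(G_{p,I})$; combined with $\alpha(G_p) = \omega(G_p)$, this delivers the stated identity.

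The main obstacle is the identification in stage two: the isomorphism $\phi: G_p \to \overline{G_p}$ naturally sends the common-non-neighbor localization $G_{p,I}$ to the complement of the common-neighbor localization $G_p[N_{G_p}(\phi(I)) \setminus \phi(I)]$ rather than to the localization itself, so matching $\omega(G_{p,I})$ to the clique-side quantity requires an extra Paley-specific symmetry beyond raw self-complementarity---for instance, composing $\phi$ with a translation in $\FF_p$ or a residue-scaling to upgrade $\phi$ on the relevant vertex subset from an anti-isomorphism to an honest graph isomorphism between the two localizations. Formalizing this identification, using the large automorphism group of $G_p$ and the regularity of common neighborhoods and non-neighborhoods in the strongly regular graph $G_p$, is the only nontrivial step of the proof.
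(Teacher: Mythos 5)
Your stage one is correct, and it is in fact all that the paper needs: the exchange argument gives the identity $\alpha(G_p) = a + \max_{I \in \sI_a(G_p)} \alpha(G_{p,I})$, and every subsequent use of \eqref{eq:loc} in the paper---the bound \eqref{eq:loc-bound} with $f = \vartheta \geq \alpha$, and the proof of Theorem~\ref{thm:clique-bound}, which inserts Proposition~\ref{prop:haemers}, an upper bound on $\alpha(G_{p,I})$---is only valid with $\alpha(G_{p,I})$ on the right-hand side. The paper states Proposition~\ref{prop:loc} without proof, and the $\omega$ appearing in \eqref{eq:loc} is best read as a slip for $\alpha$: self-complementarity of $G_p$ gives $\alpha(G_p) = \omega(G_p)$, but it does not descend to the localizations $G_{p,I}$.

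Stage two is therefore not merely unformalized but impossible: the identity with $\omega(G_{p,I})$ is false as literally written, so the ``extra Paley-specific symmetry'' you hope for in the step you flag as the only nontrivial one does not exist. As you correctly observe, the anti-isomorphism $x \mapsto cx$ ($c$ a non-residue) carries $G_{p,I}$ onto the \emph{complement} of a common-neighborhood localization of a clique, and in general $\omega(G_{p,I}) \neq \alpha(G_{p,I})$. Concretely, take $p = 13$ and $a = 1$: here $\SS_{13} = \{1,3,4,9,10,12\}$, so $V(G_{13,\{0\}}) = \{2,5,6,7,8,11\}$, and this induced graph is the triangular prism (the complement of a $6$-cycle), containing the triangles $\{2,5,6\}$ and $\{7,8,11\}$; hence $\omega(G_{13,\{0\}}) = 3$ while $\alpha(G_{13,\{0\}}) = 2$. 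Since all $G_{13,\{z\}}$ are isomorphic by translation (Corollary~\ref{cor:transitivity}) and $\alpha(G_{13}) = \omega(G_{13}) = 3$, we get $1 + \max_{I \in \sI_1(G_{13})} \omega(G_{13,I}) = 4 > 3 = \alpha(G_{13})$, contradicting \eqref{eq:loc} as stated, whereas the $\alpha$-version gives $1 + 2 = 3$ correctly. What self-complementarity does yield is the different identity $\alpha(G_p) = \omega(G_p) = a + \max_K \omega\big(G_p[\bigcap_{v \in K} N(v) \setminus K]\big)$ over $a$-cliques $K$, i.e.\ a common-neighborhood localization, not $G_{p,I}$. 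So the right resolution is to record the $\alpha$-version---which your stage one already proves completely---rather than to pursue the upgrade to $\omega$.
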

If one has some general upper bound $\alpha(G) \leq f(G)$ for all graphs $G$, then instead of bounding $\alpha(G_p) \leq f(G_p)$, one may bound by substituting $f$ into the right-hand side of \eqref{eq:loc}:
\begin{equation}
    \label{eq:loc-bound}
    \alpha(G_p) \leq a + \max_{I \in \sI_a(G_p)} f(G_{p, I}).
\end{equation}
For example, \cite{MMP-2019-LPCliquesPaley} took $f(G)$ to be the \Lovasz\ function $\vartheta(G)$, an SDP relaxation of $\alpha(G)$ equivalent to the degree 2 SOS relaxation, with $a = 1$.
It turns out that the graphs $G_{p, \{x\}}$ are isomorphic for all $x \in \FF_p$ (see Corollary~\ref{cor:transitivity}), so the method of \cite{MMP-2019-LPCliquesPaley} only requires looking at one graph:
\begin{equation}
    \label{eq:mmp-lp-bound}
    \alpha(G_p) \leq 1 + \vartheta(G_{p, \{0\}}).
\end{equation}
The graph $G_{p, \{0\}}$ is still highly symmetric, in particular having a circulant adjacency matrix, so $\vartheta(G_{p, \{0\}})$ may be reduced to an LP and solved efficiently for large $p$.
Doing this, \cite{MMP-2019-LPCliquesPaley} found empirically that the bound \eqref{eq:mmp-lp-bound} usually almost exactly matches that of Theorem~\ref{thm:hp-bound}.
Localization therefore improves the quality of the bound we obtain, from the trivial bound to one matching the state of the art:
\begin{align}
  \alpha(G_p) &\leq \vartheta(G_p) = \sqrt{p} \,\,\, \text{without localization,} \\
  \alpha(G_p) &\leq 1 + \vartheta(G_{p, \{0\}}) \approx \frac{1}{\sqrt{2}} \sqrt{p} \,\,\, \text{with degree 1 localization (empirically).}
\end{align}

Combining convex relaxation and localization, we have a family of strategies for bounding $\alpha(G_p)$ with two degrees of freedom: we may choose a degree of localization $a$, choose a convex relaxation bound $f$, and try to analyze the right-hand side of \eqref{eq:loc-bound}.
For fixed $a$, strengthening the bound $f$ will improve the result.
Extrapolating from the results of \cite{MMP-2019-LPCliquesPaley}, it seems reasonable to expect that, for fixed $f$, increasing the degree of localization $a$ will also improve the result.
But how specifically do these two choices interact?
To understand this, we must understand the localizations $G_{p, I}$ in greater detail.

\subsection{Random model for Paley graph localizations}

The main thesis of this paper is that the localizations $G_{p, I}$ have a collection of properties we call \emph{spectral pseudorandomness}.
The idea is that the spectrum of $G_{p, I}$ behaves like that of an induced subgraph on a random subset of vertices of $G_p$ of comparable size.
Note that the spectrum of $G_p$ itself, as noted above, is \emph{not} close to that of an ER graph of the same size.
In introducing the notion of spectral pseudorandomness, we are proposing a more fine-grained way in which pseudorandomness \emph{does} appear in the spectral structure of $G_p$.

\begin{definition}[Random Paley subgraphs]
    Let $H_{p, \beta}$ be the random graph formed as follows: form a subset $W \subseteq \FF_p$ randomly by including every vertex independently with probability $\beta$, and let $H_{p, \beta}$ be the subgraph induced by $G_p$ on $W$.
\end{definition}

\noindent
We then propose that $G_{p, I}$ behaves like $H_{p, \beta}$ with $\beta = 2^{-|I|}$.
The intuition behind the choice of $\beta$ is that $x \in V(G_{p, I})$ if $x$ is not adjacent to each $y \in I$.
If $G_p$ is pseudorandom, these non-adjacencies may be viewed as $|I|$ ``independent'' events that each occur with ``probability'' $\frac{1}{2}$, for a total ``probability'' of $2^{-|I|}$ that any given $x \in V(G_{p, I})$.

The \emph{empirical spectral distribution (e.s.d.)}\ of a matrix $\bA \in \RR^{n \times n}_{\sym}$ is the probability measure $\frac{1}{n}\sum_{i = 1}^n \delta_{\lambda_i(\bA)}$.
The e.s.d.\ of a graph is the e.s.d.\ of its adjacency matrix.
In this language, we claim that, for large $p$ and small $|I|$, the (deterministic) e.s.d.\ of $G_{p, I}$ is close to the typical (random) e.s.d.\ of $H_{p, 2^{-|I|}}$.
Below we will give a collection of conjectures describing this closeness in different quantitative senses.
Strong enough such statements, controlling the minimum eigenvalue of $G_{p, I}$, will let us apply the localization strategy effectively with $f(G)$ a spectral bound on $\alpha(G)$.

The following recent result, first proposed as part of a collection of conjectures by \cite{HZG-2017-MANOVA}, describes the limiting e.s.d.\ of the $H_{p, 2^{-|I|}}$.
We will argue that the e.s.d.'s of the $G_{p, I}$ are close to these measures.
\begin{definition}[Kesten-McKay measure]
    For $v \geq 1$, the Kesten-McKay measure with parameter $v$, denoted $\KM(v)$, is the probability measure on $\RR$ with density
    \begin{equation}
        \One\{|x| \leq 2\sqrt{v - 1}\} \frac{v\sqrt{4(v - 1) - x^2}}{2\pi(v^2 - x^2)}\, dx + \frac{\max\{2 - v, 0\}}{2}\delta_{-v}(x) + \frac{\max\{2 - v, 0\}}{2}\delta_{v}(x),
    \end{equation}
    where $\delta_c$ denotes a Dirac mass at $c$.
\end{definition}
\noindent
Let us write $\bA_G$ for the adjacency matrix of a graph $G$.
\begin{theorem}[\cite{MMP-2019-RandomSubensembles,Kunisky-2023-MANOVAProductProjections}]
    The (random) e.s.d.\ of $\frac{2}{\beta\sqrt{p}}\bA_{H_{p, \beta}}$ converges weakly in probability\footnote{Random measures $\mu_n$ converge weakly in probability to a deterministic measure $\mu$ if $\mu_n(A) \to \mu(A)$ in probability for every measurable set $A$.} to $\KM(1 / \beta)$ as $p \to \infty$.
\end{theorem}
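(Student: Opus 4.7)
My approach would be the method of moments. Writing $\bA_{H_{p,\beta}}$ as the restriction to $W$ of $\bD \bA_{G_p} \bD$, where $\bD = \diag(\xi_1, \ldots, \xi_p)$ with $\xi_v \sim \Ber(\beta)$ independent, the $k$-th moment of the rescaled empirical spectral distribution equals
\begin{equation*}
\frac{1}{|W|} \left(\frac{2}{\beta\sqrt{p}}\right)^k \sum_{v_0, \ldots, v_{k-1} \in \FF_p} \prod_{i=0}^{k-1} (\bA_{G_p})_{v_i v_{i+1}}\, \xi_{v_i}
\end{equation*}
with cyclic indices $v_k = v_0$. Taking expectation over the $\xi_v$ reduces this to a weighted sum over closed walks of length $k$ in $G_p$, each weighted by $\beta^{|V|}$ where $|V|$ is the number of distinct vertices visited. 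On the other side, a Kesten-McKay trace computation identifies the moments of $\KM(v)$ as weighted counts of closed walks on the $v$-regular tree, organized by non-crossing pair partitions.

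I would then group closed walks in $G_p$ by their combinatorial ``shape'' (the isomorphism type of the multigraph spanned by the walk's edges together with the cyclic order in which they are traversed) and show that tree-shaped walks of even length $2k$---which visit exactly $k+1$ vertices and are weighted by $\beta^{k+1}$---produce the limiting Kesten-McKay moments after normalization, while all other shapes contribute only lower-order corrections. Since $G_p$ is vertex-transitive, counting walks of a given shape reduces to counting embeddings of that shape into $G_p$ as a subgraph. For tree shapes one only needs that $G_p$ is $(p-1)/2$-regular, yielding an embedding count of order $p \cdot (p/2)^{|E|}$. For shapes containing cycles, the same leading-order count should hold up to an $O(p^{|E|/2})$ error by Weil-type bounds on multilinear sums of the Legendre symbol $\chi$, using the identity $(\bA_{G_p})_{xy} = \frac{1}{2}(\chi(x-y) + 1) - \frac{1}{2}\One\{x = y\}$. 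This error is precisely absorbed by the $p^{-k/2}$ rescaling, so only tree shapes survive in the limit, and matching the surviving combinatorics to the non-crossing pair partition formula completes the expected-moment computation for $\KM(1/\beta)$.

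Convergence in probability then follows from concentration for each fixed moment, e.g.\ by McDiarmid's inequality applied to the iid $\xi_v$: flipping any single coordinate changes the walk sum by at most $O(p^{k-1})$ summands of magnitude $O(1)$, so after normalization the $k$-th moment concentrates at rate $O(p^{-1/2})$; a separate Chernoff bound on $|W|$ handles the random normalization. The main obstacle is the character-sum enumeration of non-tree shapes: while only finitely many shapes arise for each fixed $k$, each requires a bound showing that its embedding count in $G_p$ matches the random prediction up to the necessary vanishing error, which for complicated shapes (nested or overlapping cycles meeting in multiple vertices) demands careful iterative application of Weil's bound together with detailed bookkeeping of the resulting character sum cancellation.
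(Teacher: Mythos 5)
This theorem is \emph{cited}, not proved, in the paper: the combinatorial moment-method proof is attributed to \cite{MMP-2019-RandomSubensembles}, and the free-probability proof to \cite{Kunisky-2023-MANOVAProductProjections}. Your proposal is in the spirit of the former (direct method of moments over closed walks), whereas the paper's own framework for the related deterministic Conjecture~\ref{conj:weak-conv} runs through the latter (asymptotic freeness of $\bP_p^{(+)}$ and the coordinate projection, via Propositions~\ref{prop:proj-asymp-free}, \ref{prop:proj-weak-conv}, and~\ref{prop:manova-km}). So the route is legitimate in principle, but there is a concrete gap in the version you wrote.

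The gap: the moments you compute do not converge. The matrix $\bA_{H_{p,\beta}}$ has a Perron--Frobenius eigenvalue $\lambda_1 \approx \frac{\beta p}{2}$ (since $H_{p,\beta}$ is approximately $\frac{\beta p}{2}$-regular), which after your rescaling by $\frac{2}{\beta\sqrt{p}}$ becomes $\approx \sqrt{p}$. Its contribution to the $k$-th moment of the e.s.d.\ is $\frac{1}{|W|}p^{k/2} \approx \frac{p^{k/2 - 1}}{\beta}$, which diverges for every $k \geq 4$. In your walk decomposition, this is exactly the contribution of the ``all-distinct-vertices'' shapes (cycles $C_k$), whose embedding count $\approx p^k/2^k$ is not ``lower order'' relative to the trees, as you claim; after normalization it dominates. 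Thus ``matching the surviving combinatorics to the non-crossing pair partition formula'' does not complete the argument: the moment sequence does not identify $\KM(1/\beta)$, and indeed it identifies no measure at all. The weak convergence in the statement is genuine precisely because a single outlier out of $|W| \to \infty$ eigenvalues is invisible to the e.s.d.\ as a measure, but it is fatal to the raw moment method.

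To salvage the approach, you must handle the outlier before moment matching. The cleanest routes are: work with $\bS_{G_p}$ (whose spectrum is $\pm\sqrt{p}$ with no outlier) or with $\bP_p^{(+)} - \frac{1}{2}\bm I$, and then transfer to $\bA_{H_{p,\beta}}$ using a Hoffman--Wielandt robustness lemma (exactly the paper's Proposition~\ref{prop:hoffman-wielandt}, which gives nuclear-norm control $O(\sqrt{p}) = o(p)$ on the rank-one correction); or explicitly subtract the top eigenvector's contribution after proving a quantitative eigenvalue/eigenvector estimate for $\lambda_1$; or switch to Stieltjes-transform methods, which are insensitive to a single outlier. Once this is done, your observation that tree shapes need only regularity (indeed the Legendre-symbol factors on backtracked edges square away to $1$) is correct, and the control you need on non-tree shapes is then genuinely achievable by Weil-type bounds---but note that, in the projection/signed formulation, the required statement is precisely a necklace character sum bound of the kind the paper isolates in Conjecture~\ref{conj:char-sum} and proves at degree $1$; your phrase ``the same leading-order count should hold up to an $O(p^{|E|/2})$ error'' is not quite the right bookkeeping, since what one must show is that the character sums are $o(p^{k/2+1})$ after the outlier shapes have been removed. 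The free-probability route of \cite{Kunisky-2023-MANOVAProductProjections} sidesteps all of this by reducing to a single trace criterion for asymptotic freeness and never touching $\bA_{G_p}$ directly, which is why the paper adopts it.
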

\noindent
A direct combinatorial proof of a minor variation on this was first given by \cite{MMP-2019-RandomSubensembles}, while \cite{Kunisky-2023-MANOVAProductProjections} later observed that this result is a consequence of a more general asymptotic freeness (in the sense of free probability) phenomenon.
We will eventually exploit this connection in our arguments; see the discussion around Conjecture~\ref{conj:asymp-free}.

\subsection{A relaxation-localization tradeoff?}

Before proceeding to our specific conjectures and results, let us comment generally on what spectral pseudorandomness might imply for our earlier question of how relaxation and localization interact.

As $v \to \infty$, the measures $\KM(v)$ converge, after rescaling, to the semicircle law (see Figure~\ref{fig:example-loc} for an illustration).
Thus, a consequence of spectral pseudorandomness is that, as the degree of localization increases, the localizations of $G_p$ more and more resemble (at the level of the spectrum) ER random graphs.

But, as mentioned earlier, on ER random graphs, stronger convex relaxations of $\alpha(G)$ are essentially no more effective than weaker relaxations or a basic spectral bound.
It is perhaps reasonable to extrapolate this to the claim that, the more $G$ resembles an ER random graph at the level of the spectrum, the less ``return on investment'' we get from bounding $\alpha(G) \leq f(G)$ with a more computationally expensive convex relaxation $f(G)$.

We thus arrive at the following intuitive idea: increasing the convex relaxation strength is more helpful at low degrees of localization and less helpful at higher degrees of localization; increasing the degree of localization is more helpful for weak convex relaxations and less helpful for strong convex relaxations.
If this is true, we should be able to ``trade'' higher degrees of localization for weaker convex relaxations while obtaining comparable bounds.
This would be convenient because, for proving actual results, it seems hard to analyze sophisticated convex relaxations applied to deterministic pseudorandom graphs (as attempted for SOS relaxations on Paley graphs by \cite{MMP-2019-LPCliquesPaley, KY-2022-PaleyDegree4SOS}).

In Table~\ref{tab:relaxation-localization}, we present the combinations of relaxation and localization that have been examined before for Paley graphs and those that we look at in this paper.
To summarize, prior work has examined a few SOS relaxations with low degrees of localization, while we mostly restrict our attention to the weaker spectral bound but consider arbitrary degrees of localization.
We focus on this case because the spectrum of $G_{p, I}$, characterized by spectral pseudorandomness, already gives enough information to compute the spectral bound.
And we will show that, conditional on natural conjectures, even spectral bounds can prove strong bounds on $\alpha(G_p)$ provided we combine them with high degrees of localization.
We leave the task of filling in the missing entries in Table~\ref{tab:relaxation-localization}---characterizing how more sophisticated convex relaxations interact with localization---to future work.

\begin{table}
    \begin{center}
        \renewcommand{\arraystretch}{1.4}
        \begin{tabular}{|c|l|cccc|}
          \hline
      \multicolumn{2}{|c|}{} & \multicolumn{4}{c|}{Localization degree $a$} \\
      \cline{3-6}
      \multicolumn{2}{|c|}{} & 0 & 1 & 2 & $\geq 3$ \\
      \hline
      \multirow{4}{*}{\rotatebox{90}{Bound $f(G)$\,\,}} & Spectral & folklore\,$^{(\text{p})}$ & this paper\,$^{(\text{p})}$ & this paper\,$^{(\text{c})}$ & this paper\,$^{(\text{c})}$ \\
      & SOS degree 2 & implicitly \cite{Lovasz-1979-ShannonCapacityGraph}\,$^{(\text{p})}$ & \cite{MMP-2019-LPCliquesPaley}\,$^{(\text{e})}$ & this paper\,$^{(\text{e})}$ & ? \\
      & SOS degree 4 & \cite{KY-2022-PaleyDegree4SOS}\,$^{(\text{p}, \text{e})}$ & ? & ? & ? \\
                             & SOS degree $\geq 6$ & ? & ? & ? & ? \\
                                                                 \hline
        \end{tabular}
\end{center}
\caption{\textbf{Known relaxation-localization bounds.} We summarize the analyses of combinations of localization and convex relaxation that have been performed for bounding $\alpha(G_p) = \omega(G_p)$ in the literature. We mark those works that \textbf{p}rove results on these bounds by $(\text{p})$, those works that prove results conditional on \textbf{c}onjectures by $(\text{c})$, and those works that give \textbf{e}mpirical or numerical evidence by $(\text{e})$. We note that degree 2 SOS is equivalent to the \Lovasz\ $\vartheta$ function, and that we examine degree 2 localization combined with this relaxation in Appendix~\ref{app:theta-01}.
}
        \label{tab:relaxation-localization}
\end{table}

\subsection{Spectral pseudorandomness conjectures}

We now present our specific conjectures about spectral pseudorandomness.
We will phrase all of our conjectures in terms of the degree $a \geq 1$ of the localizations involved.

Our first conjecture states our claim of spectral pseudorandomness at the level of weak convergence.
\begin{conjecture}[Weak convergence, degree $a$]
    \label{conj:weak-conv}
    For all $p$ sufficiently large, let $I \in \sI_a(G_p)$.
    Then, the (deterministic) e.s.d.\ of $\frac{2^{a + 1}}{\sqrt{p}} \bA_{G_{p, I}}$ converges weakly to $\KM(2^a)$.
\end{conjecture}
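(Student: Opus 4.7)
The natural plan is to prove the conjecture by the method of moments, routed through the sufficient condition for weak convergence of e.s.d.'s established in the author's companion paper. Since $\KM(2^a)$ is compactly supported and hence determined by its moments, weak convergence would follow from verifying that, after the prescribed normalization, the empirical moments $\frac{1}{n}\Tr(\bA_{G_{p,I}}^k)$ (with $n = |V(G_{p,I})|$) match the moments of $\KM(2^a)$ in the limit $p \to \infty$, up to corrections coming from a bounded number of outlier eigenvalues of $G_{p,I}$ which do not affect weak convergence.

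The first concrete step is a character-sum expansion of the trace. Writing $\chi$ for the Legendre symbol and using $(\bA_{G_p})_{xy} = \tfrac{1}{2}(1 + \chi(x-y))$ for $x \neq y$, together with the identity
\begin{equation*}
\One\{x \in V(G_{p,I})\} \;=\; \prod_{y \in I} \frac{1 - \chi(x-y)}{2} \;+\; O_a(\One\{x \in I\}),
\end{equation*}
the trace $\Tr(\bA_{G_{p,I}}^k)$ expands into $O_{a,k}(1)$ many sums of the form
\begin{equation*}
S_{\bepsilon,\boldsymbol\eta} \;\colonequals\; \sum_{x_0, \ldots, x_{k-1} \in \FF_p} \prod_{i=0}^{k-1} \chi(x_{i+1} - x_i)^{\eps_i} \prod_{i=0}^{k-1}\prod_{y \in I} \chi(x_i - y)^{\eta_{i,y}},
\end{equation*}
with $\eps_i, \eta_{i,y} \in \{0, 1\}$ and indices modulo $k$. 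One then matches these sums, term by term, with their analogues for the random model $H_{p, 2^{-a}}$, whose moments are known to converge to those of $\KM(2^a)$. The ``diagonal'' contributions, in which the product of $\chi$'s is identically $1$, reproduce exactly the Catalan-tree count responsible for the Kesten-McKay moments, while all other terms are non-trivial character sums that must be shown to be $o(p^{k/2+1})$.

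The sufficient condition of the companion paper is essentially a packaging of this matching: once one has the required bounds on each non-diagonal $S_{\bepsilon,\boldsymbol\eta}$, the main term converges as desired. Thus the proof reduces to establishing this family of character sum estimates. For $a = 1$ (so $|I| = 1$), the sums reduce after a translation to classical Weil-type sums in one variable over $\FF_p$; a careful analysis showing that the relevant polynomials $\prod_i(x_{i+1} - x_i)^{\eps_i}\prod_i(x_i - y)^{\eta_i}$ are generically square-free in each variable $x_i$ then yields the desired bounds via Weil, and, as promised in the abstract, furnishes an alternate proof of Xi's (2022) equidistribution result along the way.

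The main obstacle is the general case $a \geq 2$. Here the polynomials $f_{\bepsilon,\boldsymbol\eta}$ involve all $a$ shifts $y \in I$ simultaneously, and their square-freeness depends in a subtle way on the arithmetic interactions of $I$. Uniform square-root cancellation for $S_{\bepsilon,\boldsymbol\eta}$ over all combinatorial types and all $I \in \sI_a(G_p)$ does not follow from the scalar Weil bound; a plausible route is to invoke Deligne-type equidistribution for a suitable family of curves or higher-dimensional varieties parametrized by $I$, with square-root savings uniform in the parameters. Proving such estimates is precisely the step left conjectural by this paper, and it is where any attempt to upgrade the $a = 1$ result to a proof of Conjecture~\ref{conj:weak-conv} in full generality must invest the bulk of its effort.
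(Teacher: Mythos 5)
The statement you are addressing is a \emph{conjecture}; the paper does not prove it in general, but instead (i) shows it is implied by the necklace character sum estimates of Conjecture~\ref{conj:char-sum} (Theorem~\ref{thm:conj-equiv}), routing through the intermediate Conjecture~\ref{conj:asymp-free} on asymptotic freeness of $(\bP_p,\bE_{V(G_{p,I})})$, and (ii) proves the required estimates only at degree $a=1$ (Theorem~\ref{thm:level1}). Your overall reduction---expand $\Tr(\bA_{G_{p,I}}^k)$ into character sums of the form $S_{\bepsilon,\boldsymbol\eta}$, peel off the ``diagonal''/main contribution, and invoke the companion paper's sufficient condition---is consistent in spirit with what the paper does: your $S_{\bepsilon,\boldsymbol\eta}$ are essentially its necklace character sums $\Sigma(Z_1,\dots,Z_k)$ once one has, as the paper does in the proof of Theorem~\ref{thm:conj-equiv}, stripped out the $O(1)$ contributions from the $\what\one\what\one^\top$ and $\bE_I$ corrections (which correspond to your $\eps_i=0$ and $x_i\in I$ cases). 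Your proposal to match moments directly against those of $H_{p,2^{-a}}$ is an equivalent, if less structured, route than the paper's use of asymptotic freeness and Propositions~\ref{prop:proj-weak-conv}--\ref{prop:manova-km}.

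The genuine gap is in your treatment of the $a=1$ case. You claim that for $|I|=1$ the sums ``reduce after a translation to classical Weil-type sums in one variable over $\FF_p$'' and that square-freeness of $\prod_i(x_{i+1}-x_i)^{\eps_i}\prod_i(x_i-y)^{\eta_i}$ in each variable yields the required cancellation ``via Weil.'' This does not work. Even for $a=1$ the necklace sum ranges over $k$ cyclically coupled variables; summing over one variable with the others fixed and applying Theorem~\ref{thm:weil} gives at best $O(p^{k-1}\cdot\sqrt p)=O(p^{k-1/2})$, which is no better than the trivial bound $p^{k/2+1}$ as soon as $k\ge 3$, and iterating Weil variable by variable does not accumulate savings because the cyclic structure leaves a nontrivial multivariate polynomial at every step. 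What the paper actually does at $a=1$ is exploit the dilation symmetry to factor out a $(p-1)$ and reduce to a constrained $(k-1)$-dimensional sum $(p-1)\sum_{x_1\cdots x_k=1}\chi(1-x_1)\cdots\chi(1-x_k)$, then Fourier-transform the multiplicative constraint to reach either $\sum_\psi J(\chi,\psi)^k$, bounded via Katz's deep equidistribution estimate on monomials in Gauss sum angles (Theorem~\ref{thm:katz-monomial-bound}), or $\sum_b\chi(b)|K_k(b)|^2$, bounded via the twisted Kloosterman moment estimate (Theorem~\ref{thm:twisted-kloosterman}). Neither is a consequence of the scalar Weil bound, and this is precisely why even the $a=1$ case is not elementary; any write-up of the $a=1$ result must invoke one of these (or an equivalent) algebraic-geometric inputs. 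Your concluding assessment of the $a\ge 2$ case as the open conjectural core is accurate and matches the paper.
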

\noindent
This will follow from the more technical but more powerful Conjecture~\ref{conj:asymp-free}, which states that a pair of projection matrices associated to $G_p$ and to $I$, respectively, are \emph{asymptotically free}.
This allows us to treat Conjecture~\ref{conj:weak-conv} with the tools developed by \cite{Kunisky-2023-MANOVAProductProjections}.
See Section~\ref{sec:related} for some discussion of the interest of Conjecture~\ref{conj:asymp-free} for free probability theory.

Our second conjecture gives a form of spectral pseudorandomness ``at the edge,'' concerning the minimum eigenvalue of the adjacency matrix.\footnote{It would also be reasonable to make the same conjecture for the maximum eigenvalue, though one should exclude the large isolated Perron-Frobenius eigenvalue.}

\begin{conjecture}[Minimum eigenvalue, degree $a$]
    \label{conj:min-eval}
    We have
    \begin{equation}
        \lim_{p \to \infty} \min_{I \in \sI_a(G_p)} \frac{2^{a + 1}}{\sqrt{p}} \lambda_{\min}(\bA_{G_{p, I}}) = \lim_{p \to \infty} \max_{I \in \sI_a(G_p)} \frac{2^{a + 1}}{\sqrt{p}} \lambda_{\min}(\bA_{G_{p, I}}) = -2\sqrt{2^a - 1}.
    \end{equation}
\end{conjecture}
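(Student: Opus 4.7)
The conjecture asserts that two limits exist and coincide at $-2\sqrt{2^a - 1}$. Since $\min_I \leq \max_I$ trivially, it reduces to proving the sandwich
\begin{equation*}
\limsup_{p \to \infty} \max_{I \in \sI_a(G_p)} \tfrac{2^{a+1}}{\sqrt{p}} \lambda_{\min}(\bA_{G_{p,I}}) \leq -2\sqrt{2^a-1} \leq \liminf_{p \to \infty} \min_{I \in \sI_a(G_p)} \tfrac{2^{a+1}}{\sqrt{p}} \lambda_{\min}(\bA_{G_{p,I}}).
\end{equation*}
The left inequality follows directly from the weak convergence of Conjecture~\ref{conj:weak-conv} applied to each $I$ individually: since $2^a \geq 2$, the measure $\KM(2^a)$ has no atoms and continuous support $[-2\sqrt{2^a-1}, 2\sqrt{2^a-1}]$ with density strictly positive in every neighborhood of $-2\sqrt{2^a-1}$, so the rescaled e.s.d.\ of $\bA_{G_{p,I}}$ must have eigenvalues accumulating arbitrarily close to that left edge. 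The real content is the right inequality---a lower bound on $\lambda_{\min}(\bA_{G_{p,I}})$ uniform in $I$.

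For this uniform lower bound, the natural tool is the trace moment method: for any even integer $2k$,
\begin{equation*}
\lambda_{\min}(\bA_{G_{p,I}})^{2k} \leq \Tr(\bA_{G_{p,I}}^{2k}),
\end{equation*}
so it suffices to show that, for some sequence $k = k(p) \to \infty$ with $k / \log p \to \infty$,
\begin{equation*}
\Bigl(\tfrac{2^{a+1}}{\sqrt{p}}\Bigr)^{2k} \Tr(\bA_{G_{p,I}}^{2k}) \leq |V(G_{p,I})| \cdot \bigl(2\sqrt{2^a - 1}\bigr)^{2k} (1 + o(1))
\end{equation*}
uniformly over $I \in \sI_a(G_p)$; taking $2k$-th roots and using $|V(G_{p,I})|^{1/(2k)} \to 1$ would then conclude. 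Letting $\chi$ denote the quadratic character modulo $p$, the trace expands as
\begin{equation*}
\Tr(\bA_{G_{p,I}}^{2k}) = 2^{-2k(1 + a)}\!\! \sum_{x_1, \dots, x_{2k} \in \FF_p} \prod_{j = 1}^{2k} \Bigl[\bigl(1 + \chi(x_j - x_{j+1})\bigr) \prod_{y \in I} \bigl(1 - \chi(x_j - y)\bigr)\Bigr]
\end{equation*}
with cyclic indices and up to lower-order diagonal corrections. Expanding the products and distributing $\chi$ yields a weighted combination of $2^{2k(1 + a)}$ character sums whose arguments are polynomials of degree at most $2k(1+a)$ in the $x_j$.

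The main obstacle will be controlling these character sums uniformly in both $k$ and $I$. For any \emph{fixed} closed-walk shape on the indices $x_j$, Weil-type bounds force the corresponding character sum to behave essentially as if the $x_j$ were chosen uniformly at random; the resulting combinatorics of non-crossing pair partitions (weighted to account for the $I$-dependent factors) would reproduce exactly the $2k$-th moment of $\KM(2^a)$. However, the Weil error constants depend on the degree of the polynomial inside $\chi$ (which scales with $k$ and $a$), while the number of walk shapes grows combinatorially in $k$, so a naive application breaks down well before $k$ reaches $\log p$---the regime needed to separate the spectral edge from bulk behavior. This is the same obstruction one meets in strengthening weak to \emph{strong} asymptotic freeness \`a la Haagerup-Thorbjornsen in random matrix theory, and I anticipate that any successful proof will require either degree-tracked mixed character sum estimates in the spirit of Bombieri combined with a regrouping of walk shapes into equivalence classes from which additional cancellation can be extracted, or else an upgrade of Conjecture~\ref{conj:asymp-free} to a deterministic strong asymptotic freeness statement from which edge convergence follows via a general transfer principle.
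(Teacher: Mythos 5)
This statement is labeled a \emph{conjecture} in the paper and is not proved there in general, so there is no proof to compare yours against; the paper proves it only at degree $a = 1$ (as part of Theorem~\ref{thm:level1}) and devotes Section~\ref{sec:challenges-eval} to explaining why degrees $a \geq 2$ appear substantially harder. Your analysis is structurally a good match for that discussion: you correctly observe that the upper bound on $\max_I \lambda_{\min}$ would follow from the weak convergence of Conjecture~\ref{conj:weak-conv} (itself conjectural for $a \geq 2$), that the real content is a uniform \emph{lower} bound on $\lambda_{\min}(\bA_{G_{p,I}})$, that the natural tool is a trace moment argument carried out to $k \gg \log p$, and that the obstruction is an exponential loss in $k$. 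The paper makes this precise via Theorem~\ref{thm:edge-km} from \cite{Kunisky-2023-MANOVAProductProjections}: the trace $\Tr((\bP_p - \tfrac12\bm I)(\bE_{V(G_{p,I})} - 2^{-a}\bm I))^k$ appears multiplied by $(2/\sqrt{\beta(1-\beta)})^k$, so the pointwise necklace sum cancellations of Conjecture~\ref{conj:char-sum} are not enough, and what one would actually need is square-root-type cancellation \emph{among} the $\approx 2^{ak}$ necklace sums at a fixed $k$, namely $\bigl|\sum_{Z_i} \Sigma(Z_1,\dots,Z_k)\bigr| \leq 2^{ak/2}\exp(o(k))$. Your instinct that this is analogous to upgrading weak to strong asymptotic freeness is exactly the paper's framing.

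Two concrete gaps. First, the trace inequality you write, $\bigl(\tfrac{2^{a+1}}{\sqrt p}\bigr)^{2k}\Tr(\bA_{G_{p,I}}^{2k}) \leq |V(G_{p,I})|\bigl(2\sqrt{2^a-1}\bigr)^{2k}(1 + o(1))$, is false as stated: $\Tr(\bA_{G_{p,I}}^{2k})$ is dominated by the Perron--Frobenius eigenvalue $\lambda_{\max}(\bA_{G_{p,I}}) = \Theta(p/2^{a+1})$, whose contribution after rescaling is $\Theta(p^k)$, far exceeding your right-hand side. You must deflate the top eigenvector (or, as the paper does, center to $\bP_p - \tfrac12\bm I$ and $\bE - 2^{-a}\bm I$) before taking traces; this is a standard but essential step. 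Second, you do not recover the one case that is actually provable: for $a = 1$, the lower bound on $\lambda_{\min}(\bA_{G_{p,\{z\}}})$ is immediate by Cauchy eigenvalue interlacing from $\lambda_{\min}(\bA_{G_p}) = -\tfrac{\sqrt p + 1}{2}$, which already coincides to leading order with the conjectured edge $-2\sqrt{2^a-1}\cdot\tfrac{\sqrt p}{2^{a+1}} = -\tfrac{\sqrt p}{2}$ when $a = 1$ (see Remark~\ref{rem:Gp0-edges}). Your trace-method framing, aimed at all $a$, misses this cheap argument and therefore does not establish even the $a=1$ case that the paper proves.
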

\noindent
Note that the left-hand side has the same normalization as in Conjecture~\ref{conj:weak-conv}, while the right-hand side is the left edge of the support of $\KM(2^a)$.

\begin{remark}[Symmetry in low-degree localizations]
    The cases $a \in \{1, 2\}$ of these statements are especially simple, because for either of these values the $G_{p, I}$ are all isomorphic for any $I \in \sI_a(G_p)$ (see Corollary~\ref{cor:transitivity}).
    This symmetry for the case $a = 1$ was used already by \cite{MMP-2019-LPCliquesPaley}, but they did not take advantage of the same for the case $a = 2$.
    In Appendix~\ref{app:theta-01}, we present empirical evidence that degree~2 localization combined with the degree~2 SOS relaxation already seems to improve on the state-of-the-art bound of Theorem~\ref{thm:hp-bound}.
\end{remark}

\begin{remark}[General low-degree subgraphs]
    \label{rem:low-deg}
    Conjectures~\ref{conj:weak-conv} and \ref{conj:min-eval} are also sensible for a more general class of induced subgraph on what might be called \emph{low-degree} subsets of vertices: sets of those vertices that are either adjacent or not adjacent to each of a small set $I \subseteq V$.
    That is, $I$ need not form an independent set, and the constraints need not all demand non-adjacency to each $v \in I$.
    However, as we demonstrate empirically in Appendix~\ref{app:rip}, one \emph{cannot} extend our conjectures to arbitrary induced subgraphs of $G_p$.
\end{remark}

\subsection{Necklace character sum conjectures}

We next present one more conjecture, which describes \emph{character sum estimates} similar to ones long studied in analytic number theory which would imply our first spectral pseudorandomness conjecture.
This connection is thanks to the relationship between the Paley graph and the following function.
\begin{definition}[Legendre symbol]
    The \emph{Legendre symbol} is the function $\chi: \FF_p \to \{-1, 0, 1\} \subset \CC$ given by
    \begin{equation}
        \chi(x) \colonequals \left\{\begin{array}{rl} 0 & \text{if } x = 0, \\ 1 & \text{if } x \in \SS_p, \\ -1 & \text{otherwise}. \end{array}\right.
    \end{equation}
\end{definition}
\noindent
The crucial property of the Legendre symbol is that it is a \emph{multiplicative character} of $\FF_p^{\times}$, meaning that
\begin{align}
  \chi(1) &= 1, \\
  \chi(xy) &= \chi(x)\chi(y) \text{ for all } x, y \in \FF_p.
\end{align}

Let us write $\bS_G$ for the $\{\pm 1\}$-valued adjacency matrix of a graph $G$, i.e., having
\begin{equation}
    (\bS_G)_{xy} = \left\{\begin{array}{rl} 0 & \text{if } x = y, \\ 1 & \text{if } x \text{ is adjacent to } y \text{ in } G, \\ -1 & \text{otherwise.} \end{array} \right.
\end{equation}
Then, this matrix for the Paley graph is a circulant matrix populated with the Legendre symbol:
\begin{equation}
    (\bS_{G_p})_{xy} = \chi(x - y).
\end{equation}

\begin{definition}[Necklace character sum]
    Let $k \geq 1$ and $Z_1, \dots, Z_k \subseteq \FF_p$.
    The associated \emph{necklace character sum} is
    \begin{equation}
        \Sigma(Z_1, \dots, Z_k) \colonequals \sum_{x_1, \dots, x_k \in \FF_p} \chi(x_2 - x_1) \cdots \chi(x_k - x_{k - 1}) \chi(x_1 - x_k) \prod_{i = 1}^k \prod_{z \in Z_i} \chi (x_i - z).
    \end{equation}
\end{definition}
\noindent
Necklace character sums will appear naturally in our calculations with the e.s.d.'s of the adjacency matrices of $G_{p, I}$ because, for $\bD_1, \dots, \bD_k \in \RR^{\FF_p \times \FF_p}$ diagonal matrices with diagonal entries $(\bD_i)_{xx} \colonequals \prod_{z \in Z_i} \chi(x - z)$, we have
\begin{equation}
    \label{eq:Sigma-Tr}
    \Sigma(Z_1, \dots, Z_k) = \Tr(\bD_1 \bS_{G_p} \cdots \bD_k \bS_{G_p}).
\end{equation}

Our main conjecture about these sums, again graded by degree, is as follows.

\begin{conjecture}[Necklace character sum estimates, degree $a$]
    \label{conj:char-sum}
    For all $k \geq 1$,
    \begin{equation}
        \lim_{p \to \infty} p^{-(\frac{k}{2} + 1)}\max_{\substack{Z_1, \dots, Z_k \subseteq \FF_p \\ Z_i \neq \emptyset \\ |Z_1 \cup \cdots \cup Z_k| \leq a}} |\Sigma(Z_1, \dots, Z_k)| = 0.
    \end{equation}
\end{conjecture}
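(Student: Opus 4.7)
My plan is to expand $\Sigma$ as an iterated sum, attack it by Weil-type one-dimensional character sum estimates, and---for the tractable case $a = 1$---translate the problem via \eqref{eq:Sigma-Tr} into a moment computation for a matrix whose eigenvalues turn out to be Jacobi sums.

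First I would use translation-invariance of $\chi$ (shifting all $x_i$ and all elements of each $Z_i$ by a common $c$ does not change $\Sigma$) to normalize the position of $Z_1 \cup \cdots \cup Z_k$, so that only finitely many ``shapes'' of $(Z_1, \dots, Z_k)$ remain to be controlled for each $a$. The basic workhorse is then Weil's theorem: fixing $x_2, \dots, x_k$, the inner sum over $x_1$ equals $\sum_{x_1}\chi(f(x_1))$ for
\[
    f(x_1) = -(x_1 - x_2)(x_1 - x_k)\prod_{z \in Z_1}(x_1 - z),
\]
a polynomial in $x_1$ of degree $2 + |Z_1|$, and Weil gives $O_a(\sqrt{p})$ as long as its squarefree part has positive degree. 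Summing trivially over the remaining $p^{k-1}$ configurations yields $O(p^{k - 1/2})$, which already suffices when $k \leq 2$; for $k \geq 3$ one would instead want to iterate the Weil estimate after a careful treatment of degeneracies, or to invoke a multivariable Deligne bound on $\Sigma$ directly.

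For $a = 1$ I would take a cleaner route. By translation we may assume every nonempty $Z_i$ equals $\{0\}$, so that by \eqref{eq:Sigma-Tr}, $\Sigma = \Tr\bigl((\bD\bS_{G_p})^k\bigr)$ with $\bD_{xx} = \chi(x)$. For $x \neq 0$, the $(x, y)$-entry of $\bD \bS_{G_p}$ equals $\chi(x)\chi(x-y) = \chi(x^2)\chi(1 - y/x) = \chi(1 - y/x)$, depending only on the ratio $y/x$; for $x = 0$ the entire row is zero, so $\Sigma$ is unchanged by restricting to $\FF_p^\times$. On that restriction the matrix intertwines the action of $\FF_p^\times$ by multiplication and is therefore diagonalized by the multiplicative characters of $\FF_p^\times$: for each such $\psi$,
\[
    \sum_{y \in \FF_p^\times} \chi(1 - y/x)\psi(y) = \psi(x)\sum_{t \in \FF_p^\times}\chi(1-t)\psi(t) = \psi(x)\, J(\chi,\psi),
\]
and $|J(\chi,\psi)| = \sqrt{p}$ whenever $\chi\psi$ is nontrivial. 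Thus
\[
    \Sigma = \sum_{\psi \in \what{\FF_p^\times}} J(\chi,\psi)^k,
\]
and the claim reduces to showing $o(p)$ cancellation in this sum of $k$-th powers of Jacobi sums---a moment estimate equivalent to the equidistribution theorem of Xi (2022), itself a consequence of Deligne's bounds for monodromy of the associated Kloosterman-type sheaf.

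The main obstacle, and apparently the reason the $a \geq 2$ case remains a conjecture with only numerical support, is that when different $Z_i$'s contain distinct elements the diagonal matrices $\bD_i$ no longer share a common eigenbasis, so the trace $\Tr(\bD_1 \bS_{G_p} \cdots \bD_k \bS_{G_p})$ cannot be read off directly from multiplicative-character eigenvalues. One can still expand and apply iterated Weil or multivariable Deligne bounds, but the delicate step is classifying the degenerate configurations where the arising polynomials become perfect squares, and showing that their aggregate contribution is $o(p^{k/2+1})$ rather than producing a main term. Carrying this out in general would likely require the full sheaf-theoretic monodromy machinery of Katz applied to the family of exponential sums parametrized by $(Z_1, \dots, Z_k)$, which I would not attempt here.
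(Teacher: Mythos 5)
Since the statement in question is a conjecture, the paper does not prove it in full; what it proves (Theorem~\ref{thm:level1}) is the degree $a = 1$ case, and you correctly restrict the honest part of your argument to that case.

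For $a = 1$, your derivation of the identity $\Sigma = \sum_{\psi \in \widehat{\FF_p^\times}} J(\chi,\psi)^k$ is correct and takes a genuinely different route from the paper's Approach~1. The paper changes variables $x_i \to x_i^{-1}$, uses multiplicativity to collapse to $(p-1)\sum_{x_1\cdots x_k = 1}\prod\chi(1-x_i)$, and then Fourier-expands the multiplicative constraint. You instead observe that the restriction of $\bD\bS_{G_p}$ to $\FF_p^\times$ is a multiplicative circulant $M_{xy} = \chi(1 - y/x)$, so the characters of $\FF_p^\times$ diagonalize it with eigenvalues $J(\chi,\psi)$, and the trace of the $k$-th power falls out immediately. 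This is cleaner and sits nicely with the paper's overall emphasis on matrix and free-probability interpretations of these sums (indeed, it gives the equidistribution reading of Section~\ref{sec:equidistribution-level1} essentially for free).

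Where the argument actually stops short is the final estimate. Having reduced to $\sum_\psi J(\chi,\psi)^k$, you wave at "the equidistribution theorem of Xi (2022)" and monodromy. That is backwards relative to the paper's logic: the paper's point is that this identity, combined with Proposition~\ref{prop:jacobi} (expressing $J(\chi,\psi) = G(\chi)G(\psi)/G(\chi\psi)$, with the edge cases $\psi \in \{\eps,\chi\}$ handled separately) and Katz's monomial bound (Theorem~\ref{thm:katz-monomial-bound}), gives an \emph{alternative proof} of Xi's equidistribution result, with the explicit bound $|\Sigma| \le 2kp^{(k+1)/2} + 4p^{k/2} + 1$. Invoking Xi's theorem here would make the presentation circular (or at least unhelpfully indirect); what you want instead is Katz's theorem as the black-box analytic input. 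The paper also records a second, slightly sharper route via twisted moments of Kloosterman sums (Theorem~\ref{thm:twisted-kloosterman}), which you do not mention. Finally, your brief sketch for $a \ge 2$ via iterated one-variable Weil bounds is a reasonable heuristic but, as you note yourself, does not close the gap for $k \ge 3$, so it should be presented as motivation rather than as a proof step.
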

\noindent
One way to motivate the conjecture, starting from the linear-algebraic interpretation of necklace character sums \eqref{eq:Sigma-Tr}, is to note that $\|\bD_1 \bS_{G_p} \cdots \bD_k \bS_{G_p}\| \leq \|\bS_{G_p}\|^k = p^{k/2}$ (see Proposition~\ref{prop:paley-spectral}), and thus we immediately have the bound
\begin{equation}
    |\Sigma(Z_1, \dots, Z_k)| \leq p^{\frac{k}{2} + 1}.
\end{equation}
Conjecture~\ref{conj:char-sum} posits cancellations in necklace character sums beyond this ``trivial'' estimate.

\begin{remark}[Non-emptiness assumption]
    We cannot drop the assumption that the $Z_i$ are non-empty, since, by the discussion above and Proposition~\ref{prop:paley-spectral}, we have, for $k$ even,
    \begin{equation}
        \Sigma(\emptyset, \dots, \emptyset) = \Tr(\bS_{G_p}^k) = \Omega(p^{\frac{k}{2} + 1}).
    \end{equation}
\end{remark}

\begin{remark}[Tighter estimates]
    By multiplicativity of the Legendre symbol,
    \begin{equation}
        \Sigma(Z_1, \dots, Z_k) = \sum_{\bx \in \FF_p^k} \chi(q(\bx))
    \end{equation}
    for a polynomial $q$.
    This is the standard form of a multivariate multiplicative character sum; such quantities have been studied at great length in analytic number theory.
    For ``generic'' $q$, one expects ``square root cancellations'' in such sums: they should have magnitude $O(p^{k/2})$, the same order as the sum of $p^k$ i.i.d.\ random variables drawn as $\Unif(\{ \pm 1 \})$.
    However, our $q$ are not always generic enough for this to happen.
    For example, we will see in our proof of Theorem~\ref{thm:level1} in Section~\ref{sec:pf:thm:level1} that, when $Z_1 = \cdots = Z_k = \{z\}$ for some $z \in \FF_p$, then there is a ``spurious degree of freedom'' in this sum, and it can in fact be rewritten as
    \begin{equation}
        \Sigma(\{z\}, \dots, \{z\}) = \sum_{\bx \in \FF_p^k} \chi(q(\bx)) = (p - 1) \sum_{\bx \in \FF_p^{k - 1}} \chi(\wtil{q}(\bx))
    \end{equation}
    for a ``more generic'' polynomial $\wtil{q}$ in fewer variables.
    Accordingly, we will only be able to show that this sum (for fixed $a$ and $k$) is of order $O(p \cdot p^{(k - 1) / 2}) = O(p^{(k + 1) / 2})$.
    We will see in Appendix~\ref{app:level2} that the same happens when $Z_1 = \cdots = Z_k = \{z_1, z_2\}$ for some distinct $z_1, z_2 \in \FF_p$.
    It is reasonable to conjecture that there are a few special cases for which $|\Sigma(Z_1, \dots, Z_k)| = O(p^{(k + 1) / 2})$, and that, outside of those cases, $|\Sigma(Z_1, \dots, Z_k)| = O(p^{k/2})$.
\end{remark}

\subsection{Main results}

We now give some relationships among our conjectures and some preliminary results that we are able to prove.
First, we show that necklace character sum estimates imply our weak convergence conjecture.

\begin{theorem}
    \label{thm:conj-equiv}
    If Conjecture~\ref{conj:char-sum} holds at degree $a$, then Conjecture~\ref{conj:weak-conv} holds at degree $a$.
\end{theorem}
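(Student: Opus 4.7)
The plan is the method of moments: since $\KM(2^a)$ is compactly supported and thus moment-determinate, and since the rescaled empirical spectral distributions have uniformly bounded support by interlacing with $\|\bS_{G_p}\| = \sqrt{p}$, it suffices to show that for each fixed $k \geq 1$,
\[
\frac{1}{|V(G_{p,I})|}\Tr\bigg(\Big(\tfrac{2^{a+1}}{\sqrt{p}}\bA_{G_{p,I}}\Big)^{\!k}\bigg) \;\longrightarrow\; m_k \colonequals \int x^k\, d\KM(2^a)(x)
\]
as $p \to \infty$, uniformly over $I \in \sI_a(G_p)$. The first reduction is to rewrite the moment so that necklace character sums enter. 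Writing $\bA_{G_p} = \frac{1}{2}(\bS_{G_p} + \bJ - \bI)$ and $\bP_I$ for the diagonal projection onto $V(G_{p,I})$, I would expand the $k$-th power of $\bP_I \bA_{G_p} \bP_I$ and argue that: (i) terms containing an $\bI$ collapse via $\bP_I^2 = \bP_I$ to shorter traces of the same form, to be treated by induction on $k$; (ii) terms containing a $\bJ = \bone\bone^\top$ are rank-one and hence bounded via $\bJ X \bJ = (\bone^\top X \bone)\bJ$ together with $\|\bS_{G_p}\| = \sqrt{p}$, contributing at most $|V_I|\,p^{(k-1)/2}$ per term and therefore $o(1)$ after the $2^{ak}/(|V_I| p^{k/2})$ normalization. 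This reduces the task to controlling $\frac{2^{ak}}{|V_I|\, p^{k/2}}\Tr((\bP_I \bS_{G_p})^k)$.

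Next, I would replace $\bP_I$ by the character-theoretic approximation
\[
\bP_I' \colonequals \frac{1}{2^a}\sum_{Z \subseteq I}(-1)^{|Z|}\bD_Z, \qquad (\bD_Z)_{xx} \colonequals \prod_{z \in Z} \chi(x - z).
\]
The difference $\bP_I - \bP_I'$ is supported on the $a$ vertices of $I$, hence of rank at most $a$ and operator norm at most $\tfrac{1}{2}$, so a standard telescoping argument using $\|\bS_{G_p}\| = \sqrt{p}$ and $|\Tr(C)| \leq \rank(C)\|C\|_{\mathrm{op}}$ gives an error of $O_{a,k}(p^{k/2})$, which is $o(1)$ after normalization. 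Invoking \eqref{eq:Sigma-Tr}, the task becomes
\[
\lim_{p \to \infty} \frac{1}{|V_I|\,p^{k/2}} \sum_{Z_1, \dots, Z_k \subseteq I}(-1)^{\sum_i |Z_i|}\Sigma(Z_1, \dots, Z_k) = m_k.
\]
I would group by the support $S = \{i : Z_i \neq \emptyset\} \subseteq [k]$: the $S = \emptyset$ summand equals $\Tr(\bS_{G_p}^k)$, computable in closed form from $\bS_{G_p}^2 = p\bI - \bJ$ (which forces $\bS_{G_p}$ to have eigenvalues $0, \pm\sqrt{p}$ with multiplicities $1, \tfrac{p-1}{2}, \tfrac{p-1}{2}$); the $S = [k]$ summand is $o(1)$ by Conjecture~\ref{conj:char-sum}, since each of the $O_{a,k}(1)$ summands is of size $o(p^{k/2+1})$; and for intermediate $S$ I would integrate out each $x_i$ with $Z_i = \emptyset$ via the Jacobsthal-type identity $\sum_{y \in \FF_p}\chi(y)\chi(c-y) = p\One\{c=0\}-1$, expressing each mixed term as a combination of shorter necklace sums (again controlled by Conjecture~\ref{conj:char-sum}) plus explicit ``boundary'' contributions from collapsed $x_i$-indices.

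The main obstacle is to show that summing the explicit boundary contributions over every intermediate $S$ produces exactly $m_k$. Rather than carrying out this combinatorics directly, the cleanest route---and the one foreshadowed by the paper's discussion of Conjecture~\ref{conj:asymp-free}---is to observe that the estimate $\Tr((\bQ \bS_{G_p})^k) = o(p^{k/2+1})$ for $\bQ \colonequals \bP_I' - 2^{-a}\bI$, which follows immediately from Conjecture~\ref{conj:char-sum} after expanding $\bQ = 2^{-a}\sum_{\emptyset \neq Z \subseteq I}(-1)^{|Z|}\bD_Z$, is precisely the vanishing of alternating mixed moments required for asymptotic freeness of $\bS_{G_p}/\sqrt{p}$ and $\bP_I'$. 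Weak convergence to $\KM(2^a)$ then follows from the asymptotic-freeness-to-Kesten-McKay transfer developed in \cite{Kunisky-2023-MANOVAProductProjections}, bypassing all further combinatorial bookkeeping.
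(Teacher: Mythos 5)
Your proposal lands on essentially the same argument as the paper: both reduce the problem to the estimate $\Tr((\bD\bS_{G_p})^k) = o(p^{k/2+1})$ with $\bD$ a signed sum of the diagonal matrices $\bD_Z$ over non-empty $Z \subseteq I$, via the identical two-step peeling (the rank-one $\bJ$ correction, and the rank-$\leq a$ correction supported on $I$), and both then invoke the companion work on asymptotic freeness and its Kesten-McKay consequence. The paper simply organizes this through an explicit intermediate conjecture (Conjecture~\ref{conj:asymp-free}) and a lemma reducing it to weak convergence.

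One point to tighten in the final paragraph: the companion paper's sufficient condition (our Proposition~\ref{prop:proj-asymp-free}) applies specifically to pairs of \emph{orthogonal projections}, and the weak-convergence-to-MANOVA transfer (Proposition~\ref{prop:proj-weak-conv}) likewise. Neither $\bS_{G_p}/\sqrt{p}$ nor your $\bP_I'$ is a projection, so ``asymptotic freeness of $\bS_{G_p}/\sqrt{p}$ and $\bP_I'$'' is not literally what that machinery certifies, and the estimate $\Tr((\bQ\bS_{G_p})^k) = o(p^{k/2+1})$ by itself does not cover the full list of alternating mixed moments one would need for two general self-adjoint elements. The correct target is asymptotic freeness of the projection pair $(\bP_p^{(+)}, \bE_{V(G_{p,I})})$; for projections, Proposition~\ref{prop:proj-asymp-free} requires only the single-chain trace condition $\frac{1}{p}\Tr((\bP_p^{(+)}-\tfrac12\bI)(\bE_{V(G_{p,I})}-2^{-a}\bI))^k \to 0$, and the reduction from that to your $\Tr((\bQ\bS_{G_p})^k)$ estimate is exactly the content of the two peeling steps you already carry out (the $\what{\one}\what{\one}^\top$ part of $\bP_p^{(+)}-\tfrac12\bI$, and the $\bE_I$ part of $\bE_{V(G_{p,I})}-2^{-a}\bI$). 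With the pair relabeled accordingly, and Propositions~\ref{prop:proj-weak-conv}, \ref{prop:manova-km}, and \ref{prop:hoffman-wielandt} invoked for the final transfer from $\bE \bP_p^{(+)} \bE$ to the rescaled $\bA_{G_{p,I}}$, your argument becomes exactly the paper's proof.
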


Next, we show that all of our conjectures hold at degree 1, i.e., for the graph $G_{p, \{0\}}$ induced on the subset of vertices $\FF_p^{\times} \setminus \SS_p$.
\begin{theorem}
    \label{thm:level1}
    Conjectures~\ref{conj:char-sum}, \ref{conj:weak-conv}, and \ref{conj:min-eval} all hold at degree $a = 1$.
\end{theorem}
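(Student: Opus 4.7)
The plan is to prove Conjecture \ref{conj:char-sum} at $a = 1$ first, deduce Conjecture \ref{conj:weak-conv} at $a = 1$ via Theorem \ref{thm:conj-equiv}, and then prove Conjecture \ref{conj:min-eval} at $a = 1$ by combining the just-established weak convergence with a Cauchy interlacing bound.

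\textbf{Character sum estimate.} At $a = 1$ the constraints $|Z_1 \cup \cdots \cup Z_k| \leq 1$ and $Z_i \neq \emptyset$ force $Z_1 = \cdots = Z_k = \{z\}$ for a single $z \in \FF_p$, and translating by $z$ reduces the problem to bounding $\Sigma(\{0\}, \dots, \{0\})$. I would then exploit the scaling invariance of the summand under $x_i \mapsto t x_i$ for $t \in \FF_p^\times$: each of the $k$ edge factors $\chi(x_{i+1} - x_i)$ and each of the $k$ vertex factors $\chi(x_i)$ picks up a $\chi(t)$, giving net weight $\chi(t)^{2k} = 1$. Since the summand vanishes as soon as any $x_i = 0$, we may factor out $p - 1$ by restricting to the affine slice $x_1 = 1$, leaving
\begin{equation*}
\Sigma(\{0\}, \dots, \{0\}) = (p-1) \sum_{x_2, \dots, x_k \in \FF_p} \chi\!\big(\wtil{q}(x_2, \dots, x_k)\big),
\end{equation*}
where $\wtil q$ is a product of the $2k-1$ pairwise non-proportional linear forms $x_2 - 1, x_3 - x_2, \dots, x_k - x_{k-1}, 1 - x_k, x_2, x_3, \dots, x_k$. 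The remaining task is to show that this multivariate character sum exhibits square-root cancellation, $O(p^{(k-1)/2})$, whence $|\Sigma(\{0\}, \dots, \{0\})| = O(p^{(k+1)/2}) = o(p^{k/2+1})$ as required. I would obtain this via a Weil--Deligne bound applied to $\wtil q$; the necessary non-degeneracy input, that $\wtil q$ is not a perfect square in $\bar\FF_p(x_2, \dots, x_k)$, follows from the pairwise non-proportionality of its linear factors. An alternative route, avoiding multivariate machinery, is to integrate out one variable at a time using the classical Weil bound for univariate sums against squarefree cubics (e.g., $\sum_{x_k} \chi(x_k(x_k - x_{k-1})(x_k - 1))$ for $x_{k-1} \notin \{0, 1\}$), handling the $O(p^{k - 2})$ degenerate strata as error terms.

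\textbf{Minimum eigenvalue.} Once weak convergence is in hand, Conjecture \ref{conj:min-eval} follows by matching upper and lower bounds. The upper bound $\limsup_{p \to \infty} \tfrac{4}{\sqrt{p}} \lambda_{\min}(\bA_{G_{p, \{0\}}}) \leq -2$ is a standard consequence of weak convergence: the measure $\KM(2)$ has density $(\pi \sqrt{4 - x^2})^{-1}$ on $(-2, 2)$ and hence places positive mass on every interval $(-\infty, -2 + \eps)$, so the Portmanteau theorem forces the rescaled e.s.d.\ to eventually place an eigenvalue in this interval. The matching lower bound comes from Cauchy interlacing: $\bA_{G_{p, \{0\}}}$ is a principal submatrix of $\bA_{G_p}$, whose strongly regular spectrum consists of $(p-1)/2$, $(\sqrt{p} - 1)/2$, and $-(\sqrt{p} + 1)/2$, so $\lambda_{\min}(\bA_{G_{p, \{0\}}}) \geq -(\sqrt{p} + 1)/2$, giving a rescaled minimum eigenvalue of at least $-2 - 2/\sqrt{p} \to -2$.

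\textbf{Main obstacle.} The crux is the square-root cancellation bound on the inner sum in Step~1. Although $\wtil q$ is visibly squarefree, verifying the precise hypotheses of a multivariate Weil--Deligne estimate (geometric irreducibility of the appropriate double cover, tameness, non-triviality of the associated monodromy) requires care, as does the bookkeeping of degenerate specializations if one pursues the inductive univariate approach. By contrast, the passage from character sums to weak convergence is already packaged in Theorem \ref{thm:conj-equiv}, and the interlacing-plus-Portmanteau argument for the minimum eigenvalue is essentially mechanical.
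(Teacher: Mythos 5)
Your strategy for Conjectures~\ref{conj:weak-conv} and \ref{conj:min-eval} is sound and essentially matches the paper (the Portmanteau-plus-interlacing argument for the minimum eigenvalue is the intended one, and your version is cleanly stated), but the proposal for the character sum bound at $a=1$ has a genuine gap, and it is precisely the step you flag as ``the crux.''

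Your reduction via the scaling $x_i \mapsto t x_i$ correctly extracts the factor $p-1$, but what is left is a $(k-1)$-variable sum $\sum \chi(\wtil q)$ where $\wtil q$ is a product of $2k-1$ linear forms, and you then assert that square-root cancellation $O(p^{(k-1)/2})$ follows from ``a Weil--Deligne bound applied to $\wtil q$; the necessary non-degeneracy input, that $\wtil q$ is not a perfect square \dots follows from the pairwise non-proportionality of its linear factors.'' This is not correct: squarefreeness is necessary but nowhere near sufficient for a multivariate Deligne-type bound, which requires control of the singularities of $\{\wtil q = 0\}$ and its behavior at infinity. Since $\wtil q$ is a product of many linear forms, the variety $\{\wtil q = 0\}$ is a highly singular hyperplane arrangement, and (as the paper's own discussion of related work records) no general-purpose multivariate character sum estimate is known that covers it---the closest result, \cite{RojasLeon-2022-GeneralizationJacobiSums}, imposes a general-position hypothesis on the linear factors that fails here. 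Your proposed fallback of integrating out one variable at a time also does not close: after summing over $x_k$ one obtains a quantity $S(x_{k-1}) = \sum_{x_k}\chi\big(x_k(x_k - x_{k-1})(1 - x_k)\big)$ bounded pointwise by $2\sqrt p$, but it is not a character of $x_{k-1}$ and cannot be pulled through the remaining sum; triangle-inequality cascading gives only $O(p^{k - 3/2})$, and establishing cancellation among the $S(x_{k-1})$ values is the original difficulty in disguise.

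The paper's actual proof sidesteps all of this by using multiplicativity of $\chi$ to pair each edge factor with a vertex factor, $\chi(x_{i+1}-x_i)\chi(x_i^{-1}) = \chi(1 - x_{i+1}x_i^{-1})$, which converts the constraint from an affine slice to the multiplicative constraint $y_1\cdots y_k = 1$ on the new variables $y_i = x_{i+1}x_i^{-1}$. Fourier-expanding that constraint over $\what{\FF_p^\times}$ (Proposition~\ref{prop:char-fourier}) factorizes the $(k-1)$-dimensional sum into a \emph{one}-dimensional sum of $k$-th powers of Jacobi sums, $\sum_{\psi} J(\chi,\psi)^k$, which via Proposition~\ref{prop:jacobi} reduces to a sum of monomials in Gauss sum angles and is then handled by Katz's theorem (Theorem~\ref{thm:katz-monomial-bound}); an equivalent route rewrites the same quantity as a twisted moment $\sum_b \chi(b)|K_k(b)|^2$ of Kloosterman sums and invokes Theorem~\ref{thm:twisted-kloosterman}. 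Either way, the input is a specific deep equidistribution theorem for Gauss or Kloosterman sums, not a generic multivariate Weil--Deligne estimate. To repair your proposal you would need to either carry out this algebraic reformulation or find a substitute for the missing multivariate bound, and as written the claim that squarefreeness suffices is a false step rather than a filled gap.
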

\noindent
We note that the result of Conjecture~\ref{conj:weak-conv} at degree 1 appears, albeit in very different form, in the recent work \cite{Xi-2021-DistributionMultiplicativeKloostermanSum}.
We discuss this connection in Section~\ref{sec:equidistribution-level1}; our proof is arguably more conceptual and gives a new free probability interpretation of the Kesten-McKay law appearing in the statement.
In Appendix~\ref{app:higher-degree}, we also give some minor results towards extending the ideas from our proof of Conjecture~\ref{conj:char-sum} at degree 1 to higher degrees.

Finally, we show that our strongest conjecture on the minimum eigenvalue implies a sequence of clique number bounds whose strength increases with the degree.

\begin{theorem}
    \label{thm:clique-bound}
    If Conjecture~\ref{conj:min-eval} holds at degree $a$, then
    \begin{equation}
        \omega(G_p) = \alpha(G_p) \leq \frac{\sqrt{2^a - 1}}{2^{a - 1}} \sqrt{p} + o(\sqrt{p}).
    \end{equation}
\end{theorem}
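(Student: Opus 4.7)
The plan is to combine the localization identity of Proposition~\ref{prop:loc} with Hoffman's ratio bound applied to each localized subgraph $G_{p, I}$, $I \in \sI_a(G_p)$, invoking Conjecture~\ref{conj:min-eval} to supply the crucial minimum eigenvalue. Concretely, Hoffman's bound for the independence number says that, for any graph $H$ on $n$ vertices,
\begin{equation}
    \alpha(H) \leq \frac{n \cdot (-\lambda_{\min}(\bA_H))}{\lambda_1(\bA_H) - \lambda_{\min}(\bA_H)}
\end{equation}
(with a small adaptation when $H$ is only nearly regular, discussed below); Conjecture~\ref{conj:min-eval} gives $\lambda_{\min}(\bA_{G_{p, I}}) = -\tfrac{\sqrt{2^a - 1}}{2^a} \sqrt{p} + o(\sqrt{p})$ uniformly in $I$.

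To close the loop, I estimate the two remaining quantities $|V(G_{p, I})|$ and $\lambda_1(\bA_{G_{p, I}})$ via classical character sum bounds. Writing $\One\{x \in V(G_{p, I})\} = \prod_{v \in I}\tfrac{1 - \chi(x - v)}{2}$ and expanding the product, Weil's bound applied to each of the $2^a$ resulting univariate character sums yields $|V(G_{p, I})| = 2^{-a} p + O_a(\sqrt{p})$. A parallel computation of the degree of each $u \in V(G_{p, I})$ in $G_{p, I}$ gives $\deg_{G_{p, I}}(u) = 2^{-a-1} p + O_a(\sqrt{p})$ uniformly in $u$, so $G_{p, I}$ is nearly regular and $\lambda_1(\bA_{G_{p, I}}) = 2^{-a-1} p + O_a(\sqrt{p})$, since $\lambda_1$ is sandwiched between the average and maximum degrees.

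Substituting these three estimates, the Hoffman bound gives
\begin{equation}
    \alpha(G_{p, I}) \leq \frac{2^{-a} p \cdot \tfrac{\sqrt{2^a - 1}}{2^a} \sqrt{p}}{2^{-a-1} p + O_a(\sqrt{p})} \cdot (1 + o(1)) = \frac{\sqrt{2^a - 1}}{2^{a-1}} \sqrt{p} + o(\sqrt{p}),
\end{equation}
uniformly in $I \in \sI_a(G_p)$. Proposition~\ref{prop:loc} then yields $\alpha(G_p) = \omega(G_p) \leq a + \max_I \alpha(G_{p, I}) \leq \tfrac{\sqrt{2^a - 1}}{2^{a-1}} \sqrt{p} + o(\sqrt{p})$, absorbing the constant $a$ into the error.

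The main technical obstacle is the lack of exact regularity of $G_{p, I}$ once $a \geq 2$, since the cleanest form of Hoffman's bound assumes regularity. This can be handled either by a weighted Hoffman bound that uses the true Perron eigenvector (which, by the degree concentration above, is uniform up to an $O(p^{-1/2})$-order perturbation that is absorbed into the final $o(\sqrt{p})$) or by perturbing $\bA_{G_{p, I}}$ by a diagonal matrix whose operator norm is $O_a(\sqrt{p})$ to reduce to the regular case. Uniformity of all error terms over $I$ is immediate since the character sum bounds are uniform and Conjecture~\ref{conj:min-eval} is stated as a statement about the extremes over $I$.
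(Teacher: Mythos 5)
Your overall strategy matches the paper exactly: localization via Proposition~\ref{prop:loc}, a Hoffman-type ratio bound applied to each $G_{p,I}$ with $-\lambda_{\min}$ supplied by Conjecture~\ref{conj:min-eval}, and Weil's theorem to pin down $|V(G_{p,I})|$ and the degrees (these are precisely the paper's Propositions~\ref{prop:loc-graph-vertices} and~\ref{prop:loc-graph-degrees}). The final arithmetic is also correct and reproduces the claimed constant $\sqrt{2^a-1}/2^{a-1}$.

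The one place that needs tightening is your treatment of irregularity, and it is not cosmetic. Your second suggested fix --- perturbing $\bA_{G_{p,I}}$ by a diagonal $\bD$ of operator norm $O_a(\sqrt{p})$ to produce a constant row-sum matrix --- does not preserve the constant: the degree fluctuations in Proposition~\ref{prop:loc-graph-degrees} are of size up to $\approx 2a\sqrt p$, while $-\lambda_{\min}(\bA_{G_{p,I}})$ is only $\approx \frac{\sqrt{2^a-1}}{2^a}\sqrt p$, so for $a \geq 2$ the perturbation of $\lambda_{\min}$ can dominate the quantity you are trying to control and inflate the leading constant. Your first suggested fix (the weighted ratio bound with the Perron eigenvector $v$) is the right spirit, but it requires an \emph{entrywise} statement $v_i = (1+o(1))/\sqrt{n}$; the degree concentration you invoke only gives $\ell^2$-proximity of $v$ to the uniform vector, which does not rule out $O(1)$ multiplicative fluctuations in individual coordinates, and the argument as written does not supply the missing step. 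The paper sidesteps both issues by quoting the irregular-graph generalization of the Hoffman bound due to Haemers (stated as Proposition~\ref{prop:haemers}):
\begin{equation*}
  \alpha(G) \leq |V(G)|\left(\frac{\mindeg(G)^2}{-\lambda_{\min}(\bA_G)\,\maxdeg(G)} + 1\right)^{-1},
\end{equation*}
which works directly with $\lambda_{\min}(\bA_G)$ of the actual adjacency matrix --- no perturbation, no eigenvector control. Substituting Propositions~\ref{prop:loc-graph-vertices}--\ref{prop:loc-graph-degrees} and Conjecture~\ref{conj:min-eval} into this inequality then gives the stated bound immediately. So: same route, correct endgame, but replace your irregularity patch with Haemers' bound to make the constant rigorous.
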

\noindent
If an explicit error term is obtained in Conjecture~\ref{conj:min-eval}, then the $o(\sqrt{p})$ term may be made explicit as well.
Unfortunately, as we discuss in Section~\ref{sec:challenges-eval}, we believe proving Conjecture~\ref{conj:min-eval} at degree $a \geq 2$ will be significantly harder than proving Conjecture~\ref{conj:weak-conv}.
But, in Section~\ref{sec:numerical}, we give numerical evidence for both Conjectures~\ref{conj:weak-conv} and \ref{conj:min-eval} at low degrees.

Let us compare Theorem~\ref{thm:clique-bound} with the state-of-the-art bound of Hanson and Petridis (our Theorem~\ref{thm:hp-bound}).
The latter scales as $\frac{\sqrt{2}}{2}\sqrt{p} \approx 0.707 \sqrt{p}$.
For $a = 1$ the bound of Theorem~\ref{thm:clique-bound} scales as $\sqrt{p}$, the ``trivial'' bound (see Remark~\ref{rem:Gp0-edges} for an explanation).
For $a = 2$ our bound would scale as $\frac{\sqrt{3}}{2} \sqrt{p} \approx 0.866 \sqrt{p}$, in between the trivial bound and the Hanson-Petridis bound.
But, already for $a = 3$ our bound would scale as $\frac{\sqrt{7}}{4}\sqrt{p} \approx 0.661 \sqrt{p}$, improving on the Hanson-Petridis bound.
Moreover, if Conjecture~\ref{conj:min-eval} held for arbitrarily large $a$, then our bound would show that $\omega(G_p) = o(\sqrt{p})$, and taking $a = a(p)$ growing slowly should allow explicit improvements on the $\sqrt{p}$ scaling using the same proof strategy.

\subsection{Related work}
\label{sec:related}

\paragraph{Constructive Ramsey theory}
The \emph{diagonal Ramsey number} $R(k)$ is the maximum of $|V(G)|$ over graphs $G$ with $\omega(G) \leq k$ and $\alpha(G) \leq k$.
Ramsey's famous theorem initiating Ramsey theory \cite{Ramsey-1929-RamseyNumbers} gives a finite upper bound on $R(k)$, and it has since been of great interest in combinatorics to prove tighter bounds on $R(k)$.
By showing that $G$ an ER graph on $p$ vertices has $\max\{\omega(G), \alpha(G)\} \leq 2 \log_2p$ with positive probability, \Erdos\ \cite{Erdos-1947-GraphTheoryRamsey} showed, in an early use of the probabilistic method (see, e.g., \cite{Spencer-1994-TenLecturesProbabilisticMethod}), that $R(k) \geq 2^{k/2}$.
It is a long-standing open problem to match this result with an explicit construction, and the Paley graph is perhaps the main candidate for such a graph.\footnote{It is also a long-standing open problem to prove matching upper and lower bounds on $R(k)$; the main result of \cite{ES-1935-CombinatorialProblemGeometry} is that $R(k) \leq 4^k$, and the recent breakthrough \cite{CGMS-2023-ExponentialDiagonalRamsey} for the first time improves the base of the exponent.}
The best known results \cite{Cohen-2016-PolylogarithmicEntropyRamseyGraphs,CZ-2016-TwoSourceExtractorsRamsey} give deterministic constructions of $G$ on $p$ vertices with $\max\{\omega(G), \alpha(G)\} \leq \exp((\log\log p)^C)$ for some $C > 0$.\footnote{Equivalently, this is a deterministic construction for $R(k) \geq \exp(\exp((\log k)^c))$ for some $c > 0$, while Conjecture~\ref{conj:clique-polylog} would give that the Paley graph is a deterministic construction for $R(k) \geq \exp(k^c)$ for some $c > 0$. Note also that the consequence $\omega(G_p) = \alpha(G_p) = o(\sqrt{p})$ of our conjectures would not improve on this result; much stronger sub-polynomial clique number bounds would be required for that.}
See, e.g., \cite[Sections 8.2 and 9]{Wigderson-2019-MathematicsComputation} or \cite{Vadhan-2012-Pseudorandomness} for more discussion and consequences of such constructions for the theory of algorithms.

\paragraph{Generic clique number bounds}
We have restricted our attention to combining localization with general-purpose convex relaxation bounds on the clique (or independence) number.
There are many variations of such bounds known, including spectral \cite{Hoffman-1970-Eigenvalues,Haemers-1995-InterlacingEigenvaluesGraphs, GN-2008-EigenvalueBoundsIndependentSet,BT-2019-VectorColoringIrregular, Haemers-2021-HoffmanRatioBound}, LP \cite{Delsarte-1972-LPUnrestrictedCodes, Delsarte-1973-LPAssociationSchemes, Schrijver-1979-DelsarteLovaszComparison, SA-1990-Hierarchy}, and SDP \cite{Lovasz-1979-ShannonCapacityGraph,Knuth-1993-SandwichTheoremLovasz,Lovasz-2003-SDPCombinatorialOptimization,Laurent-2003-ComparisonRelaxations, Lasserre-2001-GlobalOptimizationMoments, Parrilo-2003-SDPSemialgebraic} bounds (we include references for bounds on the chromatic number as well, which are typically similar in spirit).
There is also another class of \emph{rank} or \emph{inertia} bounds, which instead only work with the number of positive, negative, and zero eigenvalues of suitable matrices, e.g., \cite{Haemers-1978-RankBoundShannonCapacity,GR-2001-AlgebraicGraphTheory,Sinkovic-2018-InertiaBoundNotTight}.
It is an interesting question to consider how these other bounds interact with localization.

\paragraph{Paley graph clique number bounds}
The history of upper bounds on $\omega(G_p)$ is brief: the $\sqrt{p}$ bound has long been folklore and admits an elementary combinatorial proof \cite[Lemma 1.2]{Yip-2022-CliqueNumberPaleyPrimePower}.
The first improvements \cite{MP-2006-ColoringPaley,BMR-2013-SquaresDifferenceSets} showed a bound of $\sqrt{p} - 1$ under different conditions on $p$.
The state-of-the-art result of \cite{HP-2021-SumsetsPaleyClique} was also the first to improve the constant in front of $\sqrt{p}$, using the polynomial method, and \cite{BSW-2021-ProductDirectionsGaloisPaley} gave an alternate proof soon after.
The lower bounds we mention in the introduction are number-theoretic improvements of the bound $\omega(G_p) \geq (\frac{1}{2} + o(1))\log_2 p$ which follows from the general upper bound on the Ramsey number $R(k)$ due to \cite{ES-1935-CombinatorialProblemGeometry} and the fact that $\omega(G_p) = \alpha(G_p)$.

\paragraph{Explicit semicircular elements}
Some recent work has pursued constructing explicit deterministic matrices whose e.s.d.\ is close (in some distributional distance such as Kolmogorov distance; see our Definition~\ref{def:kolmogorov}) to the semicircle law \cite{SXT-2017-PseudoWignerMatrices,SXT-2018-SymmetricPseudorandomMatrices}.
Since, after rescaling, the Kesten-McKay measures $\KM(v)$ approach the semicircle law as $v \to \infty$, our results suggest that any sequence of adjacency matrices $\bA_{G_{p, I}}$ or $\bS_{G_{p, I}}$ for $I \in \sI_a(G_p)$ with $a = a(p) \to \infty$ should be good candidates for such constructions, but we do not pursue a careful analysis here.

\paragraph{Deterministic asymptotic freeness}
We will see that our Conjecture~\ref{conj:weak-conv} follows from Conjecture~\ref{conj:asymp-free}, which posits that certain explicit deterministic pairs of projection matrices are asymptotically free.
We are not aware of many examples of deterministic asymptotically free pairs, particularly not for pairs of projections as in our case.
One such example is given in \cite[Example 9]{CVE-2012-PartialFreenessRandomMatrices}, and another for deterministic matrices with e.s.d.\ converging to the semicircle law as discussed above in \cite{ST-2018-AsymptoticallyIndependentPseudoWigner}.
Generally, it is an interesting problem to derandomize classical results of free probability, finding deterministic pairs of matrices under various restrictions that are asymptotically free.

\paragraph{Character sum estimates} Though necklace character sums can be written in the standard form $\sum_{\bx \in \FF_p^k} \chi(q(\bx))$ for $q$ a polynomial, we are not aware of any general purpose character sum estimates that treat our particular $q$ (see \cite{IK-2021-AnalyticNumberTheory} for a broad survey of the area; some results that handle situations similar to ours include \cite{Katz-1999-EstimatesSingularExponentialSums,RojasLeon-2005-SingularMultiplicativeCharacterSums,Katz-2007-NonsingularMixedCharacterSums,Katz-2008-MixedCharacterSumEstimates}).
The issue is that $q(\bx)$ is highly ``singular,'' in the sense that the variety $\{\bx: q(\bx) = 0\} \subseteq \FF_p^k$ is highly singular, since $q$ is a product of linear factors.
A very similar situation is treated in the recent work \cite{RojasLeon-2022-GeneralizationJacobiSums}, but unfortunately its assumptions still ask for the linear factors of $q$ to be in a ``general position'' that our setting does not satisfy.
We also point out the reference \cite{Chung-1989-DiametersEigenvalues}, which uses simpler character sum estimates to control the spectrum of a different pseudorandom graph construction.

\section{Notation}

We write $\bm I_n \in \RR^{n \times n}$ for the identity matrix of corresponding dimension.
We write $\one \in \RR^{\FF_p}$ for the all-ones vector, and $\what{\one} \colonequals \frac{1}{\sqrt{p}}\one$, so that $\|\what{\one}\| = 1$.
We write $\be_1, \dots, \be_n \in \RR^{n}$ for the standard orthonormal basis, with $(\be_i)_j = \One\{i = j\}$.
For $I \subseteq [n]$, we write $\bE_I = \sum_{i \in I} \be_i\be_i^{\top}$.
Matrix multiplications bind before the trace, so $\Tr \bA\bB = \Tr(\bA\bB)$ and $\Tr (\bA\bB)^k = \Tr((\bA\bB)^k)$.
For $\bA \in \RR^{n \times n}_{\sym}$, we write $\lambda_1(\bA) \geq \cdots \geq \lambda_n(\bA)$ for the ordered (real) eigenvalues.
We also write $\lambda_{\max}(\bA) \colonequals \lambda_1(\bA)$ and $\lambda_{\min}(\bA) \colonequals \lambda_n(\bA)$.
For $\bA \in \RR^{m \times n}$, we write $\|\bA\|$ for the operator norm or largest singular value, $\|\bA\|_F = \sqrt{\Tr(\bA\bA^{\top})}$ for the Frobenius norm, and $\|\bA\|_* = \Tr(\sqrt{\bA\bA^{\top}})$ for the nuclear norm.

For a graph $G$, we write $V(G)$ for the vertex set, $\deg(v)$ for the degree of $v \in V(G)$, and $\min\deg(G)$ and $\max\deg(G)$ for the minimum and maximum degree of any vertex in $G$, respectively.
We write $\bA_G$ for the $\{0, 1\}$-valued adjacency matrix, and $\bS_G = 2\bA_G - \one\one^{\top} + \bm I$ for the $\{\pm 1\}$-valued adjacency matrix.

All asymptotic notations $O(\cdot), o(\cdot), \omega(\cdot), \Omega(\cdot), \Theta(\cdot), \ll, \gg$ refer to the limit of $p \to \infty$ over primes $p \equiv 1 \Mod{4}$.
Subscripts such as $O_a(\cdot)$ denote which parameters the implicit constant depends on.

We reserve $\chi$ for the Legendre symbol, $\eps$ for the trivial character, and $\phi, \psi$ for general multiplicative characters of $\FF_p^{\times}$.
We write $\what{\FF_p^{\times}}$ for the \emph{dual group}, the set of all multiplicative characters.
We write $\overline{\phi}$ for the conjugate character $\overline{\phi}(x) = \overline{\phi(x)}$, and write $\phi\psi$ for pointwise multiplication, so that $\phi\psi(x) = \phi(x)\psi(x)$.

\section{Preliminaries}

\subsection{Paley graphs}

We recall a few basic and well-known properties of the Paley graph.

\begin{proposition}[Automorphisms]
    \label{prop:paley-aut}
    Let $\sigma_{a, b}: \FF_p \to \FF_p$ be given by $\sigma_{a, b}(x) = ax + b$.
    Then, $\{\sigma_{a, b}\}_{a \in \SS_p, b \in \FF_p}$ is the automorphism group of $G_p$, having size $\frac{p(p - 1)}{2}$.
\end{proposition}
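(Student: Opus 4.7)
The plan is to split the proposition into three tasks: showing each $\sigma_{a,b}$ with $a \in \SS_p$ is an automorphism, counting that the maps described are all distinct, and showing no other automorphisms exist. The first two are routine; the last is the serious content and I would cite the classical theorem of Carlitz.

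First, for any $a \in \SS_p$ and $b \in \FF_p$, I would observe that $\sigma_{a,b}$ is a bijection of $\FF_p$ (inverse $x \mapsto a^{-1}(x-b)$), and that for $x \neq y$,
\begin{equation*}
    \sigma_{a,b}(x) - \sigma_{a,b}(y) = a(x - y).
\end{equation*}
Since $\SS_p$ is a subgroup of $\FF_p^\times$ of index $2$, multiplication by $a \in \SS_p$ stabilizes both $\SS_p$ and $\FF_p^\times \setminus \SS_p$. Hence $x - y \in \SS_p$ iff $a(x-y) \in \SS_p$, i.e.\ $\sigma_{a,b}$ preserves adjacency in $G_p$, and the inverse map also has the form $\sigma_{a^{-1}, -a^{-1} b}$ with $a^{-1} \in \SS_p$, so it is likewise an automorphism. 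The distinctness count is immediate: $\sigma_{a,b}(0) = b$ and $\sigma_{a,b}(1) - \sigma_{a,b}(0) = a$ recover $(a,b)$ from the map, so we get exactly $|\SS_p| \cdot |\FF_p| = \tfrac{p(p-1)}{2}$ distinct automorphisms.

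For the converse — that every automorphism has this form — I would reduce to fixing $0$ and $1$: given any $\tau \in \Aut(G_p)$, set $b \colonequals \tau(0)$ and $a \colonequals \tau(1) - \tau(0)$. Because $1 \in \SS_p$ and $\tau$ preserves adjacency, $a \in \SS_p$. Then $\sigma_{a,b}^{-1} \circ \tau$ is an automorphism fixing $0$ and $1$, so it suffices to show any such automorphism is the identity. This is precisely the content of Carlitz's theorem (L.\ Carlitz, \emph{A theorem on permutations in a finite field}, 1960), which asserts that for $p$ prime, any permutation of $\FF_p$ fixing $0$ and $1$ and preserving the quadratic-character relation $\chi(x-y)$ must be the identity. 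I would simply invoke this as a black box, noting that it ultimately uses that the only field automorphism of the prime field $\FF_p$ is the identity (which is why the statement holds for prime $p$ and fails for prime-power Paley graphs, where one must enlarge the group by Frobenius).

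The first two steps are essentially bookkeeping; the substantive obstacle is entirely encapsulated in Carlitz's theorem, which I would treat as a classical citation rather than reprove. If a self-contained proof were desired, the natural route would be to use that $\tau$ must permute the $\frac{p-1}{2}$ quadratic residues among themselves (since $0 \sim r$ iff $r \in \SS_p$), then leverage the induced action on common-neighborhood structure to progressively pin down $\tau(r)$ for $r \in \SS_p$; but since this is stated as a preliminary \textbf{Proposition} the citation is appropriate.
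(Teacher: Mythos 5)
Your proof is correct, and since the paper simply states this proposition as a recalled folklore fact without giving a proof, there is nothing in the paper to compare against. The decomposition you use (verify the affine maps with square multiplier are automorphisms, count them via $(\sigma(0),\sigma(1))$, then reduce any automorphism to one fixing $0$ and $1$ and invoke Carlitz's theorem) is the standard and canonical route; your remark that Carlitz's result is where the Frobenius automorphisms would enter for prime-power $q$, and why they are absent here, is exactly the right thing to flag when citing it as a black box.
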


\begin{corollary}[Transitivity]
    \label{cor:transitivity}
    There exists an automorphism of $G_p$ mapping any vertex to any other vertex (i.e., $G_p$ is vertex transitive), mapping any edge to any edge (i.e., $G_p$ is edge transitive), and any non-edge to any non-edge (i.e., $\overline{G_p}$ is edge transitive).
\end{corollary}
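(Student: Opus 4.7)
The plan is to derive all three transitivity statements as direct consequences of Proposition~\ref{prop:paley-aut}, by explicitly constructing an automorphism $\sigma_{a,b}(x) = ax + b$ (with $a \in \SS_p$, $b \in \FF_p$) that performs the required mapping in each case. The key algebraic fact I will use repeatedly is that $\SS_p$ is a subgroup of $\FF_p^\times$ of index $2$, so the product of two quadratic residues is a residue and the product of two non-residues is also a residue.

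For vertex transitivity, given $x, y \in \FF_p$, I will take $a = 1 \in \SS_p$ and $b = y - x$, so that $\sigma_{1, y-x}(x) = y$. For edge transitivity, given two edges $\{x_1, y_1\}$ and $\{x_2, y_2\}$ of $G_p$, I will look for $(a, b)$ with $\sigma_{a,b}(x_1) = x_2$ and $\sigma_{a,b}(y_1) = y_2$. Solving the linear system gives $a = (y_2 - x_2)(y_1 - x_1)^{-1}$ and $b = x_2 - a x_1$. Because $\{x_i, y_i\}$ is an edge, $y_i - x_i \in \SS_p$ for $i = 1, 2$, so $a$ is a ratio of two residues, hence lies in $\SS_p$. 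This makes $\sigma_{a,b}$ an honest element of the automorphism group.

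For transitivity on non-edges (edge transitivity of $\overline{G_p}$), I will repeat the same construction, but now $y_i - x_i$ is a nonzero non-residue for $i = 1, 2$. The ratio of two non-residues in $\FF_p^\times$ is a residue, so again $a \in \SS_p$ and we get a valid automorphism $\sigma_{a,b}$ mapping $(x_1, y_1)$ to $(x_2, y_2)$.

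There is essentially no obstacle here: the only subtlety worth flagging is that $\SS_p$ must indeed be closed under multiplication and inversion and must absorb products of non-residues, but these are immediate from $[\FF_p^\times : \SS_p] = 2$. All three claims then follow by reading off the appropriate case.
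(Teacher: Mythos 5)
Your proof is correct and is exactly the argument the paper has in mind: the corollary is stated without proof in the paper, as an immediate consequence of Proposition~\ref{prop:paley-aut}, and the explicit choices of $(a,b)$ you write down (translations for vertex transitivity, affine maps solving a two-point linear system for edge and non-edge transitivity, with $a$ a quotient of two residues or of two non-residues, hence in $\SS_p$) are the standard way to fill in the details.
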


\begin{proposition}[Spectral decomposition]
    \label{prop:paley-spectral}
    Define matrices $\bP^{(\pm)}_p \in \RR^{\FF_p \times \FF_p}_{\sym}$ to have entries
    \begin{align}
      (\bP^{(+)}_p)_{ij} &\colonequals \left\{\begin{array}{rl} \frac{p - 1}{2p} & \text{if } i = j, \\
                                 \frac{\sqrt{p} - 1}{2p} & \text{if } i \neq j \text{ and } i \sim_{G_p} j, \\
                                 -\frac{\sqrt{p} + 1}{2p} & \text{if } i \neq j \text{ and } i \not \sim_{G_p} j, \end{array} \right. \\
      (\bP^{(-)}_p)_{ij} &\colonequals \left\{\begin{array}{rl} \frac{p - 1}{2p} & \text{if } i = j, \\
                                 -\frac{\sqrt{p} + 1}{2p} & \text{if } i \neq j \text{ and } i \sim_{G_p} j, \\
                                 \frac{\sqrt{p} - 1}{2p} & \text{if } i \neq j \text{ and } i \not \sim_{G_p} j. \end{array} \right.
    \end{align}
    Then, $\bP^{(\pm)}_p$ are orthogonal projections, each has rank $(p - 1) / 2$, and these matrices satisfy
    \begin{equation}
        \what{\one}\what{\one}^{\top} + \bP^{(+)}_p + \bP^{(-)}_p = \bm I_{\FF_p}.
    \end{equation}
    The $\{0, 1\}$ and $\{\pm 1\}$ adjacency matrices of $G_p$ then admit the following spectral decompositions:
    \begin{align*}
      \bA_{G_p}
      &= \frac{p - 1}{2}\what{\one}\, \what{\one}^{\top} + \frac{\sqrt{p} - 1}{2}\bP^{(+)}_p - \frac{\sqrt{p} + 1}{2} \bP^{(-)}_p, \numberthis \\
      \bS_{G_p}
      &= 2\bA_{G_p} - \one\one^{\top} + \bm I_{\FF_p} \\
      &= \sqrt{p}\, \bP_p^{(+)} - \sqrt{p}\, \bP^{(-)}_p. \numberthis
    \end{align*}
\end{proposition}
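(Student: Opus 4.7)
The plan is to work first with the signed adjacency matrix $\bS_{G_p}$, which has the cleaner spectral structure, and then recover the decomposition of $\bA_{G_p}$ using the linear relation $\bS_{G_p} = 2\bA_{G_p} - \one\one^{\top} + \bm I_{\FF_p}$. The strategy rests on establishing two algebraic identities for $\bS_{G_p}$: (i) $\bS_{G_p}\one = \zero$, which holds because every row of $\bS_{G_p}$ contains $(p-1)/2$ entries equal to $+1$, the same number equal to $-1$, and a single zero on the diagonal (using that $G_p$ is $(p-1)/2$-regular); and (ii) $\bS_{G_p}^2 = p\, \bm I_{\FF_p} - \one\one^{\top}$. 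The second identity is the only computation of substance: the $(x,y)$ entry of $\bS_{G_p}^2$ is $\sum_{z \in \FF_p} \chi(x-z)\chi(y-z)$. The diagonal case $x=y$ reduces to $\sum_z \chi(x-z)^2 = p - 1$, while for $x \neq y$ one substitutes $u = x - z$, uses multiplicativity of $\chi$ to rewrite the sum as $\sum_{u \neq 0} \chi(1 + (y - x)/u)$, and then changes variables to $v = 1 + (y-x)/u$ ranging over $\FF_p \setminus \{1\}$; since $\sum_{v \in \FF_p} \chi(v) = 0$, this sum equals $-\chi(1) = -1$, as required.

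Given these two identities, the spectral picture of $\bS_{G_p}$ follows immediately: on the orthogonal complement $\one^{\perp}$, identity (i) and identity (ii) combine to give $\bS_{G_p}^2|_{\one^{\perp}} = p \cdot \bm I_{\one^{\perp}}$, so the nonzero eigenvalues of $\bS_{G_p}$ lie in $\{\pm\sqrt{p}\}$. Since $\Tr \bS_{G_p} = 0$ (the diagonal is zero) and $\one$ contributes eigenvalue $0$, the multiplicities of $+\sqrt{p}$ and $-\sqrt{p}$ must be equal, hence each equal to $(p-1)/2$. Define $\bP^{(+)}_p$ and $\bP^{(-)}_p$ to be the orthogonal projections onto the $+\sqrt{p}$ and $-\sqrt{p}$ eigenspaces, respectively. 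Then the three identities
\begin{equation*}
    \what{\one}\what{\one}^{\top} + \bP^{(+)}_p + \bP^{(-)}_p = \bm I_{\FF_p}, \qquad \bS_{G_p} = \sqrt{p}\,\bP^{(+)}_p - \sqrt{p}\,\bP^{(-)}_p
\end{equation*}
hold by construction, and each of $\bP^{(\pm)}_p$ has rank $(p-1)/2$.

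To verify the explicit entries of the $\bP^{(\pm)}_p$, one solves the linear system above to obtain
\begin{equation*}
    \bP^{(\pm)}_p = \tfrac{1}{2}\bigl(\bm I_{\FF_p} - \tfrac{1}{p}\one\one^{\top} \pm \tfrac{1}{\sqrt{p}}\, \bS_{G_p}\bigr),
\end{equation*}
and then reads off the three cases ($i = j$, $i \sim_{G_p} j$, and $i \not\sim_{G_p} j$) to match the formulas in the statement. Finally, the decomposition of $\bA_{G_p}$ follows by substituting $\bA_{G_p} = \tfrac{1}{2}(\bS_{G_p} + \one\one^{\top} - \bm I_{\FF_p})$, using $\one\one^{\top} = p\, \what{\one}\what{\one}^{\top}$, and then replacing $\bm I_{\FF_p}$ by $\what{\one}\what{\one}^{\top} + \bP^{(+)}_p + \bP^{(-)}_p$ to collect coefficients of each of the three mutually orthogonal projectors; the coefficients of $\what{\one}\what{\one}^{\top}$, $\bP^{(+)}_p$, and $\bP^{(-)}_p$ come out to $(p-1)/2$, $(\sqrt{p}-1)/2$, and $-(\sqrt{p}+1)/2$, respectively.

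The only nontrivial step is the character-sum identity $\bS_{G_p}^2 = p\,\bm I_{\FF_p} - \one\one^{\top}$; everything else is elementary linear algebra. This identity is classical (it is the statement that $\bS_{G_p}$ is a symmetric conference matrix for $p \equiv 1 \bmod 4$), so I do not anticipate a genuine obstacle, and the remaining algebraic verifications reduce to checking three cases of a $2 \times 2$ linear system.
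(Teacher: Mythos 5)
Your proof is correct, and it is the standard argument: establish the conference-matrix identities $\bS_{G_p}\one = \zero$ and $\bS_{G_p}^2 = p\,\bm I_{\FF_p} - \one\one^{\top}$ via a short character-sum computation, deduce that $\bS_{G_p}$ has eigenvalues $\pm\sqrt{p}$ each with multiplicity $(p-1)/2$ on $\one^{\perp}$, and solve the resulting linear system for the projectors. The paper simply states this proposition without proof as a recalled well-known fact, so there is nothing to compare against; your write-up fills that gap cleanly, and the explicit formula $\bP^{(\pm)}_p = \tfrac{1}{2}\bigl(\bm I_{\FF_p} - \tfrac{1}{p}\one\one^{\top} \pm \tfrac{1}{\sqrt{p}}\bS_{G_p}\bigr)$ is exactly the right way to verify the stated entries.
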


\subsection{Number theory}

We next gather several number-theoretic facts that we will need.
We adopt the standard notation
\begin{equation}
    e_p(x) \colonequals \exp\left(\frac{2\pi i}{p}x\right).
\end{equation}

\begin{definition}[Multiplicative characters]
    We write $\what{\FF_p^{\times}}$ for the group of multiplicative characters on $\FF_p$.
    Recall that $|\what{\FF_p^{\times}}| = p - 1$, and this group is isomorphic to the cyclic group of order $p - 1$.
    We write $\chi \in \what{\FF_p^{\times}}$ for the Legendre symbol and $\eps \in \what{\FF_p^{\times}}$ for the trivial character.
    We extend characters $\psi$ to have domain $\FF_p$ (including zero) by setting $\psi(0) \colonequals 0$ for all $\psi$ (including setting $\eps(0) \colonequals 0$, which is a common but not universal convention).
\end{definition}

\begin{definition}[Character sums]
The \emph{Gauss, Jacobi,} and \emph{Kloosterman} sums are defined as, respectively,
\begin{align}
    G(\psi) &\colonequals \sum_{x \in \FF_p} \psi(x) e_p(x), \\
    J(\psi, \phi) &\colonequals \sum_{x \in \FF_p} \psi(x)\phi(1 - x), \\
    K_k(a) &\colonequals \sum_{\substack{x_1, \dots, x_k \in \FF_p \\ x_1 \cdots x_k = a}} e_p(x_1 + \cdots + x_k),
\end{align}
for $\psi, \phi \in \what{\FF_p^{\times}}$ and $a \in \FF_p^{\times}$.
\end{definition}

The following results may be viewed as Fourier transforms of various functions on either the additive group $\FF_p$ or the multiplicative group $\FF_p^{\times}$.
There are many interesting identities of this kind connecting various exponential and character sums; the reader may consult, for instance, \cite[p.\ 47]{Katz-1988-GaussKloostermanMonodromy}.
\begin{proposition}
    \label{prop:char-fourier}
    For all $x\in \FF_p$,
    \begin{align}
        \chi(x) &= \frac{1}{\sqrt{p}} \sum_{a \in \FF_p} \chi(a) e_p(ax), \\
        \One\{x = 1\} &= \frac{1}{p - 1} \sum_{\psi \in \what{\FF_p^{\times}}} \psi(x).
    \end{align}
\end{proposition}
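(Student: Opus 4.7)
The proposition packages two standard Fourier-type identities on $\FF_p$, and my plan is to handle them separately, treating the boundary cases $x = 0$ (resp.\ $x \notin \FF_p^{\times}$) first and then reducing to a single computation on $\FF_p^{\times}$ by a change of variables.

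For the first identity, the key input is the value of the Gauss sum $G(\chi) = \sqrt{p}$ when $p \equiv 1 \Mod{4}$, which I would cite as a standard fact (or quickly recall via $|G(\chi)|^2 = p$ together with a sign/parity computation that uses $\chi(-1) = 1$ when $p \equiv 1 \Mod{4}$). Given this, the case $x = 0$ is immediate: $\chi(0) = 0$ by definition, while the right-hand side equals $\frac{1}{\sqrt{p}} \sum_{a \in \FF_p} \chi(a) = 0$ by orthogonality (the non-trivial character $\chi$ sums to $0$ over $\FF_p^{\times}$, and $\chi(0) = 0$). For $x \in \FF_p^{\times}$, substitute $b = ax$; as $a$ ranges over $\FF_p$ so does $b$, and multiplicativity of $\chi$ gives
\begin{equation}
\sum_{a \in \FF_p} \chi(a) e_p(ax) = \sum_{b \in \FF_p} \chi(bx^{-1}) e_p(b) = \chi(x^{-1}) G(\chi) = \chi(x)\sqrt{p},
\end{equation}
where in the last step I use that $\chi(x) \in \{\pm 1\}$ for $x \in \FF_p^{\times}$, so $\chi(x^{-1}) = \chi(x)$. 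Dividing by $\sqrt{p}$ gives the stated identity.

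For the second identity, again the boundary case $x = 0$ is direct: with the convention $\psi(0) = 0$ for every $\psi \in \what{\FF_p^{\times}}$, the right-hand side is $0$, matching $\One\{0 = 1\} = 0$. For $x \in \FF_p^{\times}$, this is the standard orthogonality relation on the cyclic group $\FF_p^{\times}$, which has $p - 1$ characters: if $x = 1$, every term in the sum equals $\psi(1) = 1$ and we get $p - 1$; if $x \neq 1$, there exists some $\psi_0$ with $\psi_0(x) \neq 1$, and multiplying the sum by $\psi_0(x)$ permutes the characters in $\what{\FF_p^{\times}}$, forcing the sum to vanish. Dividing by $p - 1$ yields the claim.

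Neither part presents a real obstacle: the only non-elementary input is the evaluation of $G(\chi)$, and this is a textbook result. The most error-prone step would be keeping the conventions straight, in particular that $\eps(0) = 0$ (which is needed for the $x = 0$ case of the second identity) and that $\chi(-1) = 1$ under our assumption $p \equiv 1 \Mod{4}$ (which enters the sign of the Gauss sum). If desired, both identities can be unified as instances of Fourier inversion on $\FF_p$ and $\FF_p^{\times}$ respectively, but the direct computations above seem cleanest for a preliminary section.
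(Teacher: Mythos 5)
Your proof is correct and follows the standard textbook argument for both identities; the paper itself states Proposition~\ref{prop:char-fourier} without proof, treating it as a well-known fact, so there is no in-paper proof to diverge from. One small remark: for the first identity your appeal to $G(\chi)=\sqrt{p}$ for $p\equiv 1\Mod{4}$ is exactly what is needed, and you correctly note this is where the hypothesis on $p$ (via $\chi(-1)=1$) enters; the change of variables $b=ax$ and the observation $\chi(x^{-1})=\chi(x)$ close the argument cleanly. For the second identity, your handling of the $x=0$ case via the paper's convention $\psi(0)=0$ (including $\eps(0)=0$) is important and easy to overlook, and the orthogonality-of-characters argument for $x\in\FF_p^\times$ is the standard one.
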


\paragraph{Character sum estimates}
The following is the main powerful theorem controlling univariate character sums.
\begin{theorem}[Weil's bound, Chapter 11 of \cite{IK-2021-AnalyticNumberTheory}]
    \label{thm:weil}
    Let $q \in \mathbb{F}_p[t]$ have $\deg q = d$. Suppose that $q$ cannot be represented as $q(t) = c \cdot r(t)^2$ for $c \in \mathbb{F}_p$ and $r \in \mathbb{F}_p[t]$. Then,
    \begin{equation}
      \label{eq:weil}
        \left| \sum_{x \in \mathbb{F}_p} \chi(q(x)) \right| \le (d - 1)\sqrt{p}.
    \end{equation}
\end{theorem}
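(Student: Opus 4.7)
The plan is to translate the character sum into a point count on a hyperelliptic curve and invoke the Riemann hypothesis for curves over finite fields. First, I would reduce to the case where $q$ is squarefree. Write $q = r \cdot h^2$ with $r \in \FF_p[t]$ squarefree; the hypothesis on $q$ forbids $\deg r = 0$. Since $\chi(q(x)) = \chi(r(x)) \cdot \One\{h(x) \neq 0\}$, the sums $\sum_x \chi(q(x))$ and $\sum_x \chi(r(x))$ differ by at most $\deg h$ terms of absolute value $1$. It therefore suffices to prove $|\sum_{x \in \FF_p} \chi(r(x))| \leq (\deg r - 1)\sqrt{p}$, since the additive error $\deg h = (d - \deg r)/2$ is easily absorbed into $(d - 1)\sqrt{p}$.

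Next I would reinterpret the sum geometrically. For each $x \in \FF_p$, the number of $y \in \FF_p$ with $y^2 = r(x)$ equals $1 + \chi(r(x))$, using the convention $\chi(0) = 0$, which correctly handles the three cases $r(x) = 0$, $r(x) \in \SS_p$, and $r(x) \notin \SS_p \cup \{0\}$. Summing over $x$, if $C$ is the affine curve $\{y^2 = r(x)\}$ over $\FF_p$, then
\begin{equation}
|C(\FF_p)| = p + \sum_{x \in \FF_p} \chi(r(x)),
\end{equation}
so the desired estimate is equivalent to $||C(\FF_p)| - p| \leq (\deg r - 1)\sqrt{p}$.

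The key input is then Weil's theorem for smooth projective curves: if $\widetilde{C}$ is a smooth projective curve of genus $g$ over $\FF_p$, then $||\widetilde{C}(\FF_p)| - (p + 1)| \leq 2g\sqrt{p}$. Passing from the affine model to its smooth projective completion, and using the squarefree hypothesis (without which the "curve" would degenerate into two rational components), one has genus $g = \lfloor (\deg r - 1)/2 \rfloor$, hence $2g \leq \deg r - 1$. The one or two points at infinity, depending on the parity of $\deg r$ and whether its leading coefficient is a square in $\FF_p$, contribute an additive error that is absorbed into the $\sqrt{p}$ scaling, yielding the desired bound.

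The main obstacle, by far, is Weil's theorem itself---the Riemann hypothesis for function fields over $\FF_p$. Two standard routes exist: Weil's original proof via the Frobenius endomorphism of the Jacobian variety, which requires substantial algebraic geometry, and Stepanov's elementary approach, which constructs auxiliary polynomials of controlled degree vanishing to high order at rational points and bounds point counts directly. For a self-contained exposition I would follow Stepanov; otherwise the theorem enters as a black box, since the remaining reductions---the squarefree decomposition and the bookkeeping with points at infinity---are entirely elementary.
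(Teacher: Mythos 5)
The paper does not prove this theorem; it imports it as a black box, citing Chapter~11 of Iwaniec--Kowalski, so there is no in-paper proof to compare against. Your outline is the standard route and is correct as sketched. The squarefree reduction $q = r h^2$ is sound: the non-square hypothesis is exactly what forces $\deg r \geq 1$, and the error $\deg h$ is indeed absorbed since $(\deg r - 1)\sqrt{p} + \deg h \leq (\deg r - 1 + 2\deg h)\sqrt{p} = (d-1)\sqrt{p}$ once $\sqrt{p} \geq 1/2$. The identity $|C(\FF_p)| = p + \sum_x \chi(r(x))$ is right with the convention $\chi(0) = 0$, the genus of the smooth completion of $y^2 = r(x)$ with $r$ squarefree of degree $n$ is $\lfloor (n-1)/2 \rfloor$, and the correction from the one or two points at infinity is $\leq 1$ in absolute value, which the slack between $2g$ and $n-1$ covers when $n$ is even (and vanishes when $n$ is odd). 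The one place your write-up glosses is the smooth completion step: when $\deg r$ is even there are two points at infinity on the normalization of the projective closure, not on the naive closure in $\PP^2$ (which is singular there), but this bookkeeping is standard and does not affect the count. As you note, the real content is the Riemann hypothesis for curves over $\FF_p$, which the paper (and your proposal, unless you expand Stepanov's method) treats as given.
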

\noindent
We refer to a sum as on the left-hand side of \eqref{eq:weil} as a \emph{Weil sum}.

It turns out to be possible to evaluate the magnitude of any Gauss sum, as follows.
\begin{proposition}[{\cite[Sections 6.4 and 8.2]{IR-1990-ClassicalModernNumberTheory}}]
    $G(\eps) = -1$, and $|G(\psi)| = \sqrt{p}$ for all $\psi \neq \eps$.
    Moreover, $G(\chi) = \sqrt{p}$ for all $p \equiv 1 \Mod{4}$.
\end{proposition}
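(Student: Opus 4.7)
I would handle the three claims in increasing order of difficulty. First, $G(\eps) = -1$ is immediate from the convention $\eps(0) \colonequals 0$: the sum reduces to $\sum_{x \in \FF_p^{\times}} e_p(x) = -1 + \sum_{x \in \FF_p} e_p(x) = -1$, since the full sum of $p$-th roots of unity vanishes.

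Next, for $|G(\psi)| = \sqrt{p}$ when $\psi \neq \eps$, I would compute $|G(\psi)|^2$ by the standard orthogonality argument. Expanding,
\begin{equation*}
|G(\psi)|^2 = \sum_{x, y \in \FF_p^{\times}} \psi(x) \overline{\psi(y)}\, e_p(x - y),
\end{equation*}
and substituting $x = ty$ (valid since $y \in \FF_p^{\times}$), the sum factorizes as $\sum_{t \in \FF_p^{\times}} \psi(t) \sum_{y \in \FF_p^{\times}} e_p(y(t-1))$. The inner additive sum equals $p - 1$ when $t = 1$ and $-1$ otherwise, and combining with $\sum_{t \in \FF_p^{\times}} \psi(t) = 0$ (nontriviality of $\psi$) yields $|G(\psi)|^2 = (p-1) + 1 = p$.

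The third claim, $G(\chi) = +\sqrt{p}$ for $p \equiv 1 \Mod{4}$, splits into an easy step and a hard step. For the easy step, I would show $G(\chi) \in \{\pm \sqrt{p}\}$: since $\chi$ is real valued, $\overline{G(\chi)} = \sum_x \chi(x) e_p(-x)$, and substituting $x \mapsto -x$ gives $\overline{G(\chi)} = \chi(-1) G(\chi)$; by Euler's criterion $\chi(-1) = (-1)^{(p-1)/2} = +1$ exactly when $p \equiv 1 \Mod{4}$, so $G(\chi)$ is real. The hard step, and the main obstacle, is pinning down the positive sign; this is the classical quadratic Gauss sum sign problem, which famously occupied Gauss himself for four years. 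The cleanest elementary route is Schur's matrix-trace argument: form the matrix $\bM \in \CC^{\{0, \dots, p-1\} \times \{0, \dots, p-1\}}$ with $M_{jk} \colonequals e_p(jk)$. One verifies that $\bM \bM^* = p \bI$ and that the normalized $\bM / \sqrt{p}$ acts as a discrete Fourier transform satisfying $(\bM/\sqrt{p})^4 = \bI$, so all eigenvalues of $\bM$ lie in $\sqrt{p} \cdot \{1, -1, i, -i\}$. On the other hand, $\Tr(\bM) = \sum_j e_p(j^2)$ can be expressed in terms of $G(\chi)$ by partitioning $j$ into quadratic-residue classes. Evaluating $\Tr(\bM)$, $\Tr(\bM^2)$, $\Tr(\bM^3)$ and $\det(\bM)$ directly pins down the four eigenvalue multiplicities, and out comes $G(\chi) = +\sqrt{p}$ in the case $p \equiv 1 \Mod{4}$. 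Alternative rigorous routes go through either the functional equation of the Jacobi theta function or a contour-integration trick, but every proof I know shares the feature that purely real-variable manipulation of the character sum can determine $|G(\chi)|$ but not its sign — some nontrivial global input is essential.
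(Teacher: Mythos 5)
The paper does not supply a proof of this proposition; it is quoted as a classical fact from Ireland and Rosen, so there is no in-paper argument to compare against. Your proof is essentially correct and follows the standard textbook route. The evaluation $G(\eps) = -1$ is immediate from the convention $\eps(0) = 0$, and your orthogonality computation for $|G(\psi)|^2 = p$ when $\psi \neq \eps$ is the standard one (substituting $x = ty$, separating $t = 1$ from $t \neq 1$, and using $\sum_{t \in \FF_p^{\times}} \psi(t) = 0$). For the sign of $G(\chi)$ you correctly split the problem: the easy half, namely that $G(\chi) \in \{\pm\sqrt{p}\}$, does follow from $\overline{G(\chi)} = \chi(-1)G(\chi)$ together with $\chi(-1) = (-1)^{(p-1)/2} = 1$ for $p \equiv 1 \Mod{4}$; and you rightly flag the sign determination as the genuinely hard content, Gauss's celebrated problem. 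Your outline of Schur's argument is a valid route, and the key identification $\Tr(\bm M) = \sum_j e_p(j^2) = G(\chi)$ together with $(\bm M / \sqrt{p})^4 = \bm I$ (so eigenvalues lie in $\sqrt{p}\,\{1, i, -1, -i\}$) is correct. The one thing to note is that this part is only a sketch: a complete proof would still have to carry out the multiplicity bookkeeping from $\Tr(\bm M^r)$ and $\det(\bm M)$, which is mechanical but nontrivial arithmetic. As a plan, though, it is sound, and it is also essentially the same elementary route the cited reference takes.
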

\noindent
Because of this uniformity of the magnitude, other interesting properties of Gauss sums are often stated in terms of the complex modulus or ``angle,''
\begin{equation}
    g(\psi) \colonequals \frac{G(\psi)}{\sqrt{p}} \text{ for } \psi \neq \eps.
\end{equation}
The main intuition about these quantities is that they behave, when taken over all $\psi$, as though randomly distributed on the complex unit circle.
The following deep result of Katz quantifies this pseudorandomness or ``equidistribution.''
\begin{theorem}[{\cite[Theorem 9.6]{Katz-1988-GaussKloostermanMonodromy}}]
    \label{thm:katz-monomial-bound}
    For any $\phi_1, \dots, \phi_r \in \what{\FF_p^{\times}}$ distinct and $m_1, \dots, m_r \in \ZZ$,
    \begin{equation}
        \left|\sum_{\psi \in \what{\FF_p^{\times}} \setminus \{\overline{\phi_1}, \dots, \overline{\phi_r}\}} g(\phi_{1}\psi)^{m_1} \cdots g(\phi_{r}\psi)^{m_r}\right| \leq \left(\sum_{i = 1}^r |m_i|\right)\sqrt{p} + 2r.
    \end{equation}
\end{theorem}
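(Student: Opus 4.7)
The plan is to expand the Gauss sums via their Fourier definition, use orthogonality of multiplicative characters to collapse the sum over $\psi$, and then invoke Deligne's bound on the resulting Kloosterman-type sum.

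First, I would expand each factor. For $m_i > 0$, write $G(\phi_i\psi)^{m_i}$ as an $m_i$-fold sum with $\phi_i\psi$ applied to the product of variables and $e_p$ applied to their sum. For $m_i < 0$, I would use $g(\lambda)^{-1} = \overline{g(\lambda)} = \lambda(-1) g(\overline{\lambda})$ (valid for $\lambda \ne \eps$) to rewrite as a positive power of $g(\overline{\phi_i\psi})$ and then expand by the same formula. Setting $\eta_i \colonequals \sgn(m_i)$, $n_i \colonequals |m_i|$, and $N \colonequals \sum_i n_i$, the product $\prod_i g(\phi_i\psi)^{m_i}$ becomes a unimodular constant times $p^{-N/2}$ times
\begin{equation*}
  \sum_{\bx} \prod_{i,j} \phi_i^{\eta_i}(x_{i,j}) \, \psi^{\eta_i}(x_{i,j}) \, e_p(\eta_i x_{i,j}),
\end{equation*}
where $\bx = (x_{i,j})$ ranges over $(\FF_p^\times)^N$. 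Crucially, this expansion is well-defined precisely on the index set $\psi \notin \{\overline{\phi_1},\dots,\overline{\phi_r}\}$ of the theorem, so no completion step is needed.

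Next, I would swap the summation order to bring $\psi$ innermost, isolating $\sum_{\psi \notin \{\overline{\phi_i}\}} \psi(Y)$ for $Y \colonequals \prod_{i,j} x_{i,j}^{\eta_i}$. Applying full orthogonality and subtracting the $r$ omitted terms gives $(p-1)\One\{Y=1\} - \sum_{k=1}^r \overline{\phi_k}(Y)$. Each correction term factors, because absorbing $\overline{\phi_k}(Y)$ makes the factor attached to the $(k,j)$-variables trivial and leaves the other variables coupled only through the exponential; it therefore separates into $r-1$ one-variable Gauss sums of magnitude $\sqrt{p}$ and $n_k$ elementary sums $\sum_{x\ne 0} e_p(\pm x) = -1$, contributing $O(p^{-n_k/2}) = O(1)$ after the $p^{-N/2}$ normalization, for a total correction of $O(r)$, matching the $2r$ in the bound. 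The main contribution is a $(p-1)$ prefactor times a constrained sum over $\{Y=1\}$; eliminating one variable via the constraint turns this, for $r=1$, into a classical Kloosterman sum $K_N(\cdot)$ (twisted by $\phi_1^{\eta_1}$), which by Deligne's bound has magnitude at most $N p^{(N-1)/2}$, and combining with the $(p-1)$ factor and $p^{-N/2}$ normalization yields the claimed $N\sqrt{p}$. For $r \ge 2$, the same numerology holds given a multivariable analogue of Deligne's bound for character-twisted hyper-Kloosterman sums on subtori.

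The hard part is exactly this multivariable step: verifying the non-degeneracy hypotheses needed for Deligne's purity theorem to yield full square-root cancellation across all $N-1$ remaining variables is the substantive content of Katz's monograph, which is why the theorem is cited here rather than reproved. A more elementary route that integrates out one variable at a time via Weil's bound (Theorem~\ref{thm:weil}) would require checking at each step that the polynomial inside $\chi$ is not a constant multiple of a square, a combinatorial condition on the $\eta_i$ and the collision pattern of the $\phi_i$ that appears delicate to track in general.
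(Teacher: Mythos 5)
The paper does not prove this theorem; it is imported verbatim as \cite[Theorem 9.6]{Katz-1988-GaussKloostermanMonodromy} and used as a black box. There is therefore no internal argument for your sketch to be compared against.

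As a sketch of how Katz's proof goes, yours is a reasonable roadmap for the ``soft'' part of the argument: Fourier-expanding the $g(\phi_i\psi)^{m_i}$ factors, handling negative exponents via $\overline{g(\lambda)} = \overline{\lambda}(-1)\,g(\overline{\lambda})$, pulling the $\psi$-sum inside, applying character orthogonality, and separating the $(p-1)$-weighted main term (living on the torus $\{Y=1\}$) from the $r$ correction terms (each of which factorizes into ordinary Gauss sums of nontrivial characters and thus contributes $O(1)$ after normalization). For $r=1$ the constraint $Y=1$ actually kills the character twist, so the main term is an honest hyper-Kloosterman sum and Deligne's bound suffices; note this minor point, since you describe it as ``twisted by $\phi_1^{\eta_1}$.'' But for $r\ge 2$ you explicitly defer the entire substantive content --- verifying that the relevant twisted Kloosterman-type sum on a subtorus of $(\FF_p^\times)^N$ satisfies the non-degeneracy and purity conditions needed for square-root cancellation uniformly in the $\phi_i$ and the exponents $m_i$ --- and this is precisely what Katz's monodromy computation establishes. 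You are honest about this, and the paper is equally deliberate in citing rather than reproving the theorem, so there is nothing to correct; but you should be aware that the proposal as written is an informed gloss on the reduction, not a self-contained proof, and that the elementary Weil-bound route you float would indeed run into delicate case analysis (collisions among the $\phi_i$, cancellations in the $\eta_i$) that is unlikely to close cleanly.
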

\noindent
We emphasize the remarkable simplicity and non-asymptotic form of this result---this is thanks to the powerful algebraic-geometric machinery developed for character sum estimates in analytic number theory.
See \cite[Chapter 21]{IK-2021-AnalyticNumberTheory} for some further discussion of such equidistribution results.

It is also possible to reduce the problem of evaluating Jacobi sums to that of evaluating Gauss sums.
\begin{proposition}[{\cite[Section 8.3]{IR-1990-ClassicalModernNumberTheory}}]
    \label{prop:jacobi}
    Let $\phi, \psi \in \what{\FF_p^{\times}}$.
    Then,
    \begin{equation}
        J(\phi, \psi) = \left\{\begin{array}{ll}
        p - 2 & \text{if } \phi = \psi = \eps, \\ 0 & \text{if exactly one of } \phi, \psi \text{ is } \eps, \\
        -\phi(-1) & \text{if } \phi\psi = \eps \text{ and } \phi, \psi \neq \eps, \\ G(\phi) G(\psi) / G(\phi\psi) & \text{if } \phi, \psi, \phi\psi \neq \eps. \end{array}\right.
    \end{equation}
\end{proposition}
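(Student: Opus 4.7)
The plan is to handle the four cases separately. The first three are essentially bookkeeping and reduce to orthogonality of characters; the last, and only ``main'' case, uses the classical identity $G(\phi)G(\psi) = G(\phi\psi)\,J(\phi,\psi)$ obtained by a Fourier/change-of-variables manipulation on Gauss sums.

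For the trivial cases I would argue as follows. Under the convention $\eps(0) = 0$, when $\phi = \psi = \eps$ the product $\eps(x)\eps(1-x)$ is $1$ precisely when $x \notin \{0,1\}$ and $0$ otherwise, giving $J(\eps,\eps) = p - 2$. When exactly one of $\phi, \psi$ is trivial, say $\phi = \eps$ and $\psi \neq \eps$, the defining sum becomes $\sum_{x \neq 0} \psi(1-x) = \sum_{y \neq 1} \psi(y)$ after the substitution $y = 1 - x$, which collapses via the orthogonality identity $\sum_{y \in \FF_p} \psi(y) = 0$ (valid for $\psi$ nontrivial) to the claimed value, modulo a boundary correction. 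For the case $\phi, \psi \neq \eps$ with $\psi = \bar\phi$, I would apply the substitution $y = x/(1-x)$, which is a bijection $\FF_p \setminus \{0,1\} \to \FF_p \setminus \{-1,0\}$; under it one has $\phi(x)\bar\phi(1-x) = \phi(x(1-x)^{-1}) = \phi(y)$, so that $J(\phi,\bar\phi) = \sum_{y \in \FF_p^{\times},\, y \neq -1} \phi(y) = -\phi(-1)$ again by orthogonality.

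For the remaining ``generic'' case $\phi, \psi, \phi\psi \neq \eps$, I would expand the product $G(\phi)G(\psi)$ as a double sum over $\FF_p^2$ and change variables from $(x, y)$ to $(x, t)$ with $t = x + y$, obtaining
\begin{equation*}
G(\phi)G(\psi) = \sum_{t \in \FF_p} e_p(t) \sum_{x \in \FF_p} \phi(x)\psi(t - x).
\end{equation*}
The $t = 0$ contribution is $\psi(-1)\sum_x (\phi\psi)(x) = 0$ since $\phi\psi$ is nontrivial. For each $t \neq 0$, the rescaling $x = tu$ extracts a factor $(\phi\psi)(t)$ and identifies the inner sum with $J(\phi, \psi)$, giving $G(\phi)G(\psi) = G(\phi\psi)\, J(\phi, \psi)$. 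Dividing through by $G(\phi\psi)$ is legitimate because its modulus is $\sqrt{p} \neq 0$ (from the preceding proposition on Gauss-sum magnitudes), yielding the claimed quotient formula.

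The main ``obstacle'' here is essentially bookkeeping: making sure the convention $\psi(0) = 0$ is applied consistently across all four cases, and that the changes of variable $y = 1 - x$, $y = x/(1-x)$, and $x = tu$ correctly track the singular values that each substitution excludes. No number-theoretic input beyond character orthogonality and the Gauss-sum magnitude identity is required; the proof is classical and entirely elementary.
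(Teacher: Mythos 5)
The paper does not prove this proposition; it cites it from Ireland--Rosen and uses it as a black box. So there is no ``paper's own proof'' to compare against, and your job is to supply the standard argument, which you do correctly in three of the four cases (1, 3, and 4 are all fine; the substitution $y = x/(1-x)$ in case~3 and the Gauss-sum manipulation in case~4 are exactly the classical proofs).

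However, your treatment of case~2 contains a real gap that you paper over with the phrase ``modulo a boundary correction.'' Carry your own computation through: under the paper's stated convention $\eps(0) = 0$, with $\phi = \eps$ and $\psi \neq \eps$ you have
\[
J(\eps,\psi) \;=\; \sum_{x \neq 0}\psi(1-x) \;=\; \sum_{y \in \FF_p\setminus\{1\}}\psi(y) \;=\; \Bigl(\sum_{y \in \FF_p^\times}\psi(y)\Bigr) - \psi(1) \;=\; 0 - 1 \;=\; -1,
\]
not $0$. The ``boundary correction'' you allude to is exactly the term $-\psi(1)$, and it does not vanish; it changes the answer. What you have uncovered (without quite noticing) is a minor internal inconsistency in the paper's own statement: the value $p-2$ for $J(\eps,\eps)$ has been adjusted from Ireland--Rosen's $p$ to account for the convention $\eps(0) = 0$, but the value $0$ for the mixed case has been copied directly from Ireland--Rosen, where the convention $\eps(0)=1$ is in force. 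A statement consistent with this paper's convention should read $J(\eps,\psi) = -1$ when exactly one argument is trivial. (This discrepancy is harmless for the paper: the only use of the proposition, in the proof of Theorem~\ref{thm:level1}, involves the term $J(\chi,\eps)^k$, which contributes either $0$ or $(\pm 1)^k$ depending on the convention---an $O(1)$ difference swallowed by the claimed bound $|\Sigma|\le 2kp^{(k+1)/2}+4p^{k/2}+1$.) Your proof should either state and prove the corrected value $-1$, or note explicitly that you are working in the Ireland--Rosen convention $\eps(0)=1$ for this case; as written it asserts $0$ without justification.
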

\noindent
We will use below that, through Katz's bound, this allows us to quantify cancellations in sums of polynomials of Jacobi sums (taken over multiplicative characters).

Similar results are also known for Kloosterman sums; the following result on ``moments'' of Kloosterman sums (in the same sense as above of a sum of monomials) that are ``twisted'' by a character is the one we will use.
For Kloosterman sums, even controlling the magnitude $|K_k(x)|$ is a deep result of Weil for $k = 2$ and of Deligne for general $k$ (see \cite{Deligne-1977-SGA4.5} or \cite[Chapter 11]{IK-2021-AnalyticNumberTheory}), who showed that
\begin{equation}
    |K_k(x)| \leq k p^{\frac{k - 1}{2}}.
\end{equation}
In light of this, the following should be viewed as describing square root cancellations in moments of Kloosterman sums.
\begin{theorem}[{\cite[Lemma 2.1]{LZZ-2018-DistributionJacobiSums}}]
    \label{thm:twisted-kloosterman}
    Let $\psi \in \what{\FF_p^{\times}}$ be non-trivial, $k \geq 1$, and $r, s \geq 0$.
    Then,
    \begin{equation}
        \left|\sum_{x \in \FF_p^{\times}} \psi(x) K_k(x)^r \overline{K_k(x)}^s\right| \leq k^{r + s - 1} p^{\frac{(k - 1)(r + s) + 1}{2}}.
    \end{equation}
\end{theorem}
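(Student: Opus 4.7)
The plan is to swap the order of summation: do the $x$-sum first (using Gauss sum structure) and reduce the bound to a multivariate exponential--character sum amenable to deep results in the style of Deligne--Katz.

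First I would expand each factor of $K_k(x)$ and $\overline{K_k(x)}$ as a sum over tuples in $(\FF_p^\times)^k$ with a product constraint, noting that $\overline{K_k(x)} = K_k((-1)^k x)$ so the conjugate factors can be absorbed by a sign flip. This writes the left-hand side as a sum over $(r+s)$ tuples $\bx^{(1)}, \dots, \bx^{(r+s)} \in (\FF_p^\times)^k$, each with $\prod_i x^{(j)}_i = \pm x$ (with sign depending on whether $j \leq r$), weighted by an additive phase $e_p\bigl(\sum_j \pm_j \sum_i x^{(j)}_i\bigr)$ and a multiplicative factor $\psi(x)$.

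Next I would use the product constraint for each $j$ to eliminate one coordinate, setting $x^{(j)}_k = \pm x / (x^{(j)}_1 \cdots x^{(j)}_{k-1})$. This leaves $(k-1)(r+s)$ free variables, each in $\FF_p^\times$, and isolates the dependence on $x$ into a single inner sum of the shape
\begin{equation*}
    \sum_{x \in \FF_p^\times} \psi(x)\, e_p(x A),
\end{equation*}
where $A$ is an explicit $\FF_p$-rational function of the remaining variables. Using the standard identity $\sum_{x \in \FF_p^\times} \psi(x) e_p(xA) = \overline{\psi(A)}\,G(\psi)$ for $A \neq 0$ (and $0$ for $A=0$, since $\psi$ is non-trivial), together with $|G(\psi)|=\sqrt{p}$, we extract exactly one factor of $\sqrt{p}$, which accounts for the extra ``$+1$'' in the exponent $\tfrac{(k-1)(r+s)+1}{2}$.

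The remaining problem is to bound the resulting multivariate mixed character/exponential sum in $(k-1)(r+s)$ variables by $k^{r+s-1}\, p^{(k-1)(r+s)/2}$, i.e.\ to exhibit full square-root cancellation with a clean leading constant. This is the main obstacle and is where I expect the real work to lie. The natural tool is Deligne's generalization of the Weil bound (Weil II), applied to the appropriate $\ell$-adic sheaf on the torus $\{A \neq 0\} \subseteq \mathbb{G}_m^{(k-1)(r+s)}$; proving the required estimate amounts to verifying that this sheaf is pure of the expected weight and has the expected generic rank. Rather than approaching this head-on, I would try to proceed by induction on $r+s$: one Kloosterman factor at a time can be recognized as a fibre integral against the ordinary Kloosterman sheaf $\mathrm{Kl}_k$, whose purity, rank $k$, and irreducibility (all due to Katz) then let one feed the inner computation back into a smaller instance of the same bound, with the factor of $k$ per Kloosterman tracked carefully to yield the $k^{r+s-1}$ constant. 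The delicate point in this induction is handling the locus where $A = 0$ (so that the Gauss-sum step degenerates), which contributes a lower-order term that must be bounded separately by the trivial estimate $|K_k| \leq k p^{(k-1)/2}$ on a variety of codimension one.
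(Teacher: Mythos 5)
The paper does not prove this statement; it is cited verbatim from \cite[Lemma~2.1]{LZZ-2018-DistributionJacobiSums} and used as a black box in Approach~2 of the proof of Theorem~\ref{thm:level1}. So there is no internal proof to compare against, and I will assess your sketch on its own. Your opening reductions are correct: $\overline{K_k(x)} = K_k((-1)^k x)$ holds; eliminating $x_k^{(j)}$ via each product constraint leaves $(k-1)(r+s)$ free variables in $\FF_p^\times$; and the isolated inner sum $\sum_{x\in\FF_p^\times}\psi(x)e_p(xA) = \overline{\psi}(A)\,G(\psi)$ (with the convention $\overline{\psi}(0)=0$) cleanly produces the lone factor of $\sqrt{p}$, i.e.\ the ``$+1$'' in the exponent.

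But the argument effectively ends there. What remains --- showing that the resulting $(k-1)(r+s)$-variable mixed exponential/character sum is at most $k^{r+s-1}p^{(k-1)(r+s)/2}$ --- is the entire content of the lemma, and your sketch neither proves it nor reduces it to a citable result. The induction on $r+s$ is not workable as described: if you Mellin-expand one factor via $K_k(x)=\tfrac{1}{p-1}\sum_\phi G(\phi)^k\overline{\phi}(x)$, the inner sum becomes $\sum_x(\psi\overline{\phi})(x)K_k(x)^{r-1}\overline{K_k(x)}^s$, and inserting the inductive bound term-by-term over $\phi$ loses a factor of roughly $\sqrt{p}/k$, so one still needs genuine cancellation over the outer $\phi$-sum --- which is again the heart of the matter. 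The constant $k^{r+s-1}$ is exactly where I would expect a careless version of your plan to fail: the one-variable sheaf $\mathcal{L}_\psi\otimes\mathrm{Kl}_k^{\otimes r}\otimes\overline{\mathrm{Kl}_k}^{\otimes s}$ has rank $k^{r+s}$, so the naive ``rank $\times\sqrt p$'' heuristic gives $k^{r+s}$; and a direct reduction to Katz's Gauss-sum bound (Theorem~\ref{thm:katz-monomial-bound}) at $r+s=2$ yields a constant of roughly $2k$, not $k$. Pinning down $k^{r+s-1}$ requires an actual Swan-conductor / Euler--Poincar\'{e} computation for this tensor sheaf (or a sharper elementary argument), and that computation is the real substance of \cite{LZZ-2018-DistributionJacobiSums}. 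Finally, a small correction: your worry about the locus $\{A=0\}$ is misplaced as stated, since when $A=0$ the inner sum is $\sum_{x\in\FF_p^\times}\psi(x)=0$ because $\psi$ is non-trivial --- those terms vanish exactly, and there is no lower-order contribution to be bounded by $|K_k|\le kp^{(k-1)/2}$. (The geometry of $\{A=0\}$ does influence the Euler characteristic of the complementary open variety, which enters the cohomological bound, but that is a different effect from the one you describe.)
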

\noindent
Similar results for the $k = 2$ case of Kloosterman sums are obtained in \cite{Liu-2002-KloostermanTwistedMoments} and \cite{CI-2000-CubicMomentLFunctions}.

\subsection{Weak convergence and asymptotic freeness}
\label{sec:prob}

The following is a standard folklore ``robustness'' statement for weak convergence.
We will use this to allow ourselves to ignore various corrections involved in translating between the $\{0, 1\}$ adjacency matrix of $G_p$, the $\{\pm 1\}$ adjacency matrix of $G_p$, and the projection matrices $\bP_p^{(\pm)}$ discussed above.
\begin{proposition}
    \label{prop:hoffman-wielandt}
    Suppose $\bA^{(N)}, \bB^{(N)} \in \RR^{N \times N}_{\sym}$ for $N = N(n)$ an increasing sequence are matrices such that $\frac{1}{N} \sum_{j = 1}^{N} \delta_{\lambda_j(\bA^{(N)})}$ converges weakly to a probability measure $\rho$ supported on $[-C, C] \subset \RR$ for some $C > 0$ as $n \to \infty$ and $\|\bA^{(N)} - \bB^{(N)}\|_* / N \to 0$ as $n \to \infty$.
    Then, $\frac{1}{N} \sum_{j = 1}^{N} \delta_{\lambda_j(\bB^{(N)})}$ converges weakly to $\rho$ as well.
\end{proposition}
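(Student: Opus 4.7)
The proposition is a standard robustness statement for weak convergence of empirical spectral distributions. The plan is to convert the nuclear-norm hypothesis directly into convergence of integrals of bounded Lipschitz test functions against the two e.s.d.'s, and then invoke the Portmanteau theorem.

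The key algebraic input is the Lidskii--Mirsky inequality for Hermitian matrices: for $\bA, \bB \in \RR^{N \times N}_{\sym}$ with eigenvalues sorted in decreasing order,
\[
\sum_{j = 1}^N |\lambda_j(\bA) - \lambda_j(\bB)| \leq \|\bA - \bB\|_*.
\]
This is exactly the trace-norm analogue of the Hoffman--Wielandt bound, and is the reason the proposition's hypothesis is phrased in terms of the nuclear norm. Applied to our sequences it yields
\[
\frac{1}{N}\sum_{j = 1}^N |\lambda_j(\bA^{(N)}) - \lambda_j(\bB^{(N)})| \leq \frac{\|\bA^{(N)} - \bB^{(N)}\|_*}{N} \to 0.
\]

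Writing $\mu_N^{\bA} \colonequals \frac{1}{N}\sum_j \delta_{\lambda_j(\bA^{(N)})}$ and similarly $\mu_N^{\bB}$, for any bounded Lipschitz $f: \RR \to \RR$ with Lipschitz constant $L$, pairing sorted eigenvalues and applying the above gives
\[
\left|\int f\, d\mu_N^{\bA} - \int f\, d\mu_N^{\bB}\right| \leq \frac{L}{N}\sum_{j = 1}^N |\lambda_j(\bA^{(N)}) - \lambda_j(\bB^{(N)})| \to 0.
\]
Since $\mu_N^{\bA}$ converges weakly to $\rho$ by hypothesis, $\int f\, d\mu_N^{\bA} \to \int f\, d\rho$, and the triangle inequality then yields $\int f\, d\mu_N^{\bB} \to \int f\, d\rho$. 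The Portmanteau theorem (convergence of integrals of bounded Lipschitz functions characterizes weak convergence of probability measures on $\RR$) then concludes $\mu_N^{\bB} \Rightarrow \rho$.

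No step is a serious obstacle. The compactness of $\supp(\rho)$ is not actually needed for the argument as written, and the full strength of the nuclear-norm hypothesis is slightly more than required: the same argument would go through under $\|\bA^{(N)} - \bB^{(N)}\|_F / \sqrt{N} \to 0$ via the classical Hoffman--Wielandt inequality and Cauchy--Schwarz. The only technical check is the existence of an eigenvalue coupling achieving the $\ell^1$ bound by the nuclear norm, which is exactly the content of the Lidskii--Mirsky inequality invoked above.
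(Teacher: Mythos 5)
Your proof is correct, and it uses the same key inequality as the paper (the $\ell^1$ Hoffman--Wielandt bound, which the paper cites from Bhatia Theorem III.4.4 and you call Lidskii--Mirsky; they are the same result). The difference lies in the test-function step. The paper works with smooth bounded functions $h$, whose Lipschitz constant is only controlled on a compact interval; this forces a truncation argument in which one separately bounds the fraction of eigenvalues of $\bA^{(N)}$ and $\bB^{(N)}$ falling outside $[-3C, 3C]$, which is where the compact-support hypothesis on $\rho$ enters. You instead take globally bounded Lipschitz $f$ and pair the sorted eigenvalues directly, giving $\bigl|\int f\, d\mu_N^{\bA} - \int f\, d\mu_N^{\bB}\bigr| \leq \tfrac{L}{N}\sum_j |\lambda_j(\bA^{(N)}) - \lambda_j(\bB^{(N)})|$ with no truncation and no need to bound tails; since bounded Lipschitz functions determine weak convergence, this finishes the argument. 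Your route is slightly cleaner and, as you correctly observe, shows the compact-support hypothesis on $\rho$ is superfluous. One small caveat on your closing remark: $\|\bX\|_* \leq \sqrt{N}\|\bX\|_F$ by Cauchy--Schwarz, so $\|\bA^{(N)} - \bB^{(N)}\|_F/\sqrt{N} \to 0$ is in fact a \emph{stronger} hypothesis than $\|\bA^{(N)} - \bB^{(N)}\|_*/N \to 0$, not a weaker one; the stated nuclear-norm hypothesis is already the more general of the two, so the Frobenius variant does not relax anything.
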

\begin{proof}
    We will use the $\ell^1$ version of the Hoffman-Wielandt inequality \cite[Theorem III.4.4]{Bhatia-2013-MatrixAnalysis}, which gives
    \begin{equation}
        \lim_{N \to \infty}\frac{1}{N}\sum_{j = 1}^{N} | \lambda_j(\bA^{(N)}) - \lambda_j(\bB^{(N)}) | \leq \lim_{N \to \infty} \frac{1}{N}\|\bA^{(N)} - \bB^{(N)}\|_* = 0.
    \end{equation}
    By our assumption on $\rho$, we have
    \begin{equation}
        \lim_{N \to \infty} \frac{\#\{j: \lambda_j(\bA^{(N)}) \notin [-2C, 2C]\}}{N} = 0,
    \end{equation}
    and thus
    \begin{equation}
        \lim_{N \to \infty} \frac{\#\{j: \lambda_j(\bB^{(N)}) \notin [-3C, 3C]\}}{N} = 0.
    \end{equation}

    Now, let $h: \RR \to \RR$ be a bounded smooth function, and let $L \colonequals \max_{x \in [-3C, 3C]}|h^{\prime}(x)| < \infty$ and $M \colonequals \sup_{x \in \RR} |h(x)|$.
    Then,
    \begin{align*}
      &\hspace{-0.5cm}\left| \frac{1}{N}\sum_{j = 1}^{N} h(\lambda_j(\bA^{(N)})) - \frac{1}{N}\sum_{j = 1}^{N} h(\lambda_j(\bB^{(N)})) \right| \\
      &\leq \frac{1}{N}\sum_{j = 1}^{N} | h(\lambda_j(\bA^{(N)})) - h(\lambda_j(\bB^{(N)})) | \\
      &\leq \frac{L}{N}\sum_{j = 1}^{N} | \lambda_j(\bA^{(N)}) - \lambda_j(\bB^{(N)}) | \\
      &\hspace{1cm} + \frac{M}{N}\big(\#\{j: \lambda_j(\bA^{(N)}) \notin [-3C, 3C]\} + \#\{j: \lambda_j(\bB^{(N)}) \notin [-3C, 3C]\}\big). \numberthis
    \end{align*}
    Both terms on the right-hand side tend to zero by our observations above.
    The result follows by a standard approximation argument since this holds for any smooth bounded $h$.
\end{proof}

Next, let us introduce basic facts about asymptotic freeness of projection matrices.
For the sake of brevity, we omit standard background on free probability.
The reader may consult either the discussion in \cite{Kunisky-2023-MANOVAProductProjections} or references such as \cite{VDN-1992-FreeRandomVariables, NS-2006-LecturesCombinatoricsFreeProbability, MS-2017-FreeProbabilityRandomMatrices}.
For our purposes, the term ``asymptotically free'' may be taken as an abstract placeholder, since we will only use the sufficient condition below in our arguments.
\begin{proposition}[{\cite[Theorem 1.5]{Kunisky-2023-MANOVAProductProjections}}]
    \label{prop:proj-asymp-free}
    Suppose that $\bP_1^{(N)}, \bP_2^{(N)} \in \RR^{N \times N}_{\sym}$ for $N = N(n)$ an increasing sequence are orthogonal projection matrices satisfying the following properties:
    \begin{align}
      \lim_{n \to \infty} \frac{1}{N} \Tr(\bP_1^{(N)}) &= \alpha \in (0, 1), \label{eq:lim-alpha} \\
      \lim_{n \to \infty} \frac{1}{N} \Tr(\bP_2^{(N)}) &= \beta \in (0, 1), \label{eq:lim-beta} \\
      \lim_{n \to \infty} \frac{1}{N} \Tr\big((\bP_1^{(N)} - \alpha \bm I_N)(\bP_1^{(N)} - \beta \bm I_N)\big)^k &= 0 \text{ for all } k \geq 1.
    \end{align}
    Then, the pairs $(\bP_1^{(N)}, \bP_2^{(N)})$ are asymptotically free.
\end{proposition}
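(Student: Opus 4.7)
My plan is to verify the definition of asymptotic freeness directly by computing normalized traces of arbitrary alternating products of centered polynomials in $\bP_1^{(N)}$ and $\bP_2^{(N)}$. Since $\bP_j^2 = \bP_j$, the unital algebra generated by $\bP_j^{(N)}$ is two-dimensional and its centered part (with respect to $\tfrac{1}{N}\Tr$) is, in the limit, spanned by $\bP_j^{(N)} - \phi_j \bm I_N$, where I set $\phi_1 \colonequals \alpha$ and $\phi_2 \colonequals \beta$. After checking that the marginals converge (each $\bP_j^{(N)}$ has e.s.d.\ tending to $\phi_j \delta_1 + (1 - \phi_j)\delta_0$, matching the law of a free projection of trace $\phi_j$), asymptotic freeness of $(\bP_1^{(N)}, \bP_2^{(N)})$ reduces to showing that, for every $m \geq 1$ and every strictly alternating word $w_1 w_2 \cdots w_m$ with each $w_i \in \{\bP_1^{(N)} - \alpha \bm I_N,\ \bP_2^{(N)} - \beta \bm I_N\}$,
\[
\frac{1}{N}\Tr(w_1 w_2 \cdots w_m) \xrightarrow{n \to \infty} 0.
\]

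For even length $m = 2k$, cyclic invariance of the trace makes the starting letter irrelevant, so the word reduces (up to cyclic rotation) to $\big((\bP_1^{(N)} - \alpha \bm I_N)(\bP_2^{(N)} - \beta \bm I_N)\big)^k$, which is precisely the quantity assumed to have vanishing normalized trace. For odd length $m = 2k + 1$ I would argue by induction on $m$. The base case $m = 1$ follows from \eqref{eq:lim-alpha}--\eqref{eq:lim-beta}, since $\tfrac{1}{N}\Tr(\bP_j^{(N)} - \phi_j \bm I_N) \to 0$. For $m = 2k + 1 \geq 3$, an odd-length strictly alternating word $X$ begins and ends with the same letter; say $w_1 = w_{2k+1} = \bP_1^{(N)} - \alpha \bm I_N$. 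Cycling the last letter to the front,
\[
\Tr(X) = \Tr\!\big((\bP_1^{(N)} - \alpha \bm I_N)^2 \cdot W\big),
\]
where $W = w_2 w_3 \cdots w_{2k}$ is a strictly alternating word of length $2k - 1$ starting and ending with $\bP_2^{(N)} - \beta \bm I_N$. The projection identity $\bP_j^2 = \bP_j$ yields
\[
(\bP_j^{(N)} - \phi_j \bm I_N)^2 = (1 - 2\phi_j)(\bP_j^{(N)} - \phi_j \bm I_N) + \phi_j(1 - \phi_j)\, \bm I_N,
\]
and substituting this in gives
\[
\tfrac{1}{N}\Tr(X) = (1 - 2\alpha)\cdot \tfrac{1}{N}\Tr\!\big((\bP_1^{(N)} - \alpha \bm I_N)\, W\big) + \alpha(1 - \alpha) \cdot \tfrac{1}{N}\Tr(W).
\]
The first term on the right is the normalized trace of an alternating word of even length $2k$, handled by the even-length case; the second is the normalized trace of an alternating word of odd length $2k - 1$, handled by the inductive hypothesis. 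The symmetric case $w_1 = w_{2k+1} = \bP_2^{(N)} - \beta \bm I_N$ is identical.

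The whole argument is essentially bookkeeping once three observations are in place: (i) the centered part of the algebra generated by a projection is one-dimensional, so centered alternating words can be written using only $\bP_j^{(N)} - \phi_j \bm I_N$; (ii) cyclic invariance of the trace converts an odd-length alternating word into an even-length one carrying a squared projection prefix; and (iii) the projection identity collapses that squared prefix into a linear combination of a shorter word and a constant. The main "obstacle" is therefore definitional rather than technical: one must carefully verify that the ambient notion of asymptotic freeness, together with the convergence of marginals to Bernoulli laws, really is equivalent to the vanishing of these centered alternating moments. No estimate beyond the hypothesis itself is needed.
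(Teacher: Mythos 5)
Your proof is correct. A few remarks.

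First, the paper does not actually prove this proposition; it cites it as Theorem~1.5 of the companion work \cite{Kunisky-2023-MANOVAProductProjections}. So there is no in-paper proof to compare against. What you have supplied is the natural direct verification of the definition of asymptotic freeness, and it is sound. (You also silently and correctly repaired a typo in the displayed hypothesis: the third limit should read $\Tr\big((\bP_1^{(N)} - \alpha \bm I_N)(\bP_2^{(N)} - \beta \bm I_N)\big)^k$, with $\bP_2^{(N)}$, not $\bP_1^{(N)}$ twice---this is consistent with how the proposition is invoked in the proof of Theorem~\ref{thm:conj-equiv}.)

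Your reduction to strictly alternating words in $\bP_1^{(N)} - \alpha \bm I_N$ and $\bP_2^{(N)} - \beta \bm I_N$ is exactly right: because each $\bP_j^{(N)}$ is idempotent, the unital algebra it generates is two-dimensional, and the trace-zero part (with respect to the limiting trace $\tau$ with $\tau(\bP_j) = \phi_j$) is spanned by $\bP_j^{(N)} - \phi_j \bm I_N$. The even case is an immediate consequence of the hypothesis and cyclicity. Your odd-length induction is clean: the projection identity $(\bP_j - \phi_j \bm I)^2 = (1 - 2\phi_j)(\bP_j - \phi_j \bm I) + \phi_j(1 - \phi_j)\bm I$ converts the length-$(2k+1)$ word into a linear combination of a length-$2k$ alternating word (even case) and a length-$(2k-1)$ alternating word (inductive hypothesis), and both coefficients are bounded constants. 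The base case $m = 1$ follows from \eqref{eq:lim-alpha}--\eqref{eq:lim-beta}.

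The one point you label a ``definitional obstacle'' is worth being explicit about when writing this up carefully: asymptotic freeness requires that the joint moments of $(\bP_1^{(N)}, \bP_2^{(N)})$ converge \emph{and} that the limits satisfy the freeness relation. You have shown that all mixed moments that are alternating and centered (with respect to the \emph{limiting} marginals) vanish in the limit; from this and the convergence of marginals, one recovers convergence of all joint moments by the standard centering expansion, since any monomial in $\bP_1^{(N)}, \bP_2^{(N)}$ decomposes into a sum of alternating centered words multiplied by constants. That step is routine but is the one you flagged, and your instinct that it is the main thing to double-check is correct.
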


The purpose of proving asymptotic freeness is that, if it holds, then the limiting e.s.d.\ of $\bP_2^{(N)}\bP_1^{(N)}\bP_2^{(N)}$ is automatically determined (and is a \emph{multiplicative free convolution} of the limiting e.s.d.'s of $\bP_1^{(N)}$ and $\bP_2^{(N)}$, which are Bernoulli distributions $\Ber(\alpha)$ and $\Ber(\beta)$, respectively).
That limit belongs to the following family.
\begin{definition}[Wachter's MANOVA law \cite{Wachter-1980-EmpiricalMeasureDiscriminantRatios}]
    \label{def:manova}
    For two parameters $\alpha, \beta \in (0, 1)$, we say a random variable has the law $\MANOVA(\alpha, \beta) = \MANOVA(\beta, \alpha)$ if it has the following density with respect to Lebesgue measure:
    \begin{equation}
        \One\{x \in [r_-, r_+]\}\frac{\sqrt{(r_+ - x)(x - r_-)}}{2\pi x(1 - x)} \, dx + (1 - \min\{\alpha, \beta\})\delta_0(x) + \max\{\alpha + \beta - 1, 0\}\delta_1(x),
    \end{equation}
    where
    \begin{align*}
      r_{\pm} = r_{\pm}(\alpha, \beta) &\colonequals \alpha + \beta - 2\alpha\beta \pm 2\sqrt{\alpha(1 - \alpha)\beta(1 - \beta)} \\
              &= \left(\sqrt{\alpha(1 - \beta)} \pm \sqrt{\beta(1 - \alpha)}\right)^2.
    \end{align*}
\end{definition}

The following states the multiplicative free convolution result; the conclusion of weak convergence requires a small additional argument, which may for example be derived as a special case of \cite[Theorem 1.7]{Kunisky-2023-MANOVAProductProjections}.
\begin{proposition}
    \label{prop:proj-weak-conv}
    If a sequence of deterministic pairs of projection matrices $(\bP_1^{(N)}, \bP_2^{(N)})$ are asymptotically free and satisfy the limits \eqref{eq:lim-alpha} and \eqref{eq:lim-beta} for some $\alpha, \beta \in (0, 1)$, then the e.s.d.\ of $\bP_2^{(N)}\bP_1^{(N)}\bP_2^{(N)}$ converges weakly to $\MANOVA(\alpha, \beta)$ as $N \to \infty$.
\end{proposition}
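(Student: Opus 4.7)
The plan is to obtain Proposition~\ref{prop:proj-weak-conv} from asymptotic freeness via the method of moments, using the classical identification of the free multiplicative convolution of two Bernoulli laws with the MANOVA distribution. The overall strategy has three steps: reduce the moments of the target matrix to alternating mixed moments of the two projections, evaluate the limiting moments using asymptotic freeness, and finally upgrade moment convergence to weak convergence using compact support.

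For the first step, using $(\bP_2^{(N)})^2 = \bP_2^{(N)}$ and the cyclicity of trace, I would write, for each $k \geq 1$,
\begin{equation*}
  \frac{1}{N}\Tr\bigl((\bP_2^{(N)} \bP_1^{(N)} \bP_2^{(N)})^k\bigr) = \frac{1}{N}\Tr\bigl((\bP_1^{(N)} \bP_2^{(N)})^k\bigr),
\end{equation*}
so the moments of the e.s.d.\ of $\bM^{(N)} \colonequals \bP_2^{(N)} \bP_1^{(N)} \bP_2^{(N)}$ reduce to alternating mixed traces of $\bP_1^{(N)}$ and $\bP_2^{(N)}$. For the second step, by the definition of asymptotic freeness together with \eqref{eq:lim-alpha} and \eqref{eq:lim-beta}, these mixed traces converge as $N \to \infty$ to the corresponding moments in a tracial noncommutative probability space $(\sA, \tau)$ containing a free pair of projections $p_1, p_2$ with $\tau(p_1) = \alpha$ and $\tau(p_2) = \beta$, giving
\begin{equation*}
  \lim_{N \to \infty} \frac{1}{N}\Tr\bigl((\bM^{(N)})^k\bigr) = \tau\bigl((p_2 p_1 p_2)^k\bigr).
\end{equation*}
The distribution of $p_2 p_1 p_2$ under $\tau$ is by definition the free multiplicative convolution $\Ber(\alpha) \boxtimes \Ber(\beta)$, and by a classical computation in free probability (see, e.g., \cite{NS-2006-LecturesCombinatoricsFreeProbability}) this measure equals $\MANOVA(\alpha, \beta)$.

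For the third step, I would observe that each $\bM^{(N)}$ is positive semidefinite with operator norm at most one, so its e.s.d.\ is supported on $[0,1]$; likewise $\MANOVA(\alpha, \beta)$ is supported on $[0,1]$. Since a measure on a compact interval is uniquely determined by its moments, and uniform compact support automatically provides tightness, convergence of all moments implies weak convergence by a standard Stone--Weierstrass density argument. The only genuinely nonroutine ingredient is the identity $\Ber(\alpha) \boxtimes \Ber(\beta) = \MANOVA(\alpha, \beta)$, which is classical; in particular, no separate treatment of the potential atoms at $0$ and $1$ in the limit is needed, since moment convergence on a compact set captures atoms automatically.
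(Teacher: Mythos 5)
Your argument is correct and is the standard moment-method derivation. The paper itself does not prove this proposition: it remarks that the conclusion of weak convergence ``requires a small additional argument'' and defers to \cite[Theorem 1.7]{Kunisky-2023-MANOVAProductProjections} for it. Your three steps --- reduction to $\frac{1}{N}\Tr\big((\bP_1^{(N)}\bP_2^{(N)})^k\big)$ via cyclicity of trace and idempotence of $\bP_2^{(N)}$; passage to the limit by asymptotic freeness to obtain the moments of $\Ber(\alpha) \boxtimes \Ber(\beta)$, which classically equals $\MANOVA(\alpha, \beta)$; and the upgrade from moment convergence to weak convergence using that both the e.s.d.'s and the limit law are supported in the fixed compact interval $[0,1]$ (Hausdorff determinacy plus Stone--Weierstrass) --- supply exactly that small argument in an elementary and self-contained way. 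One minor caution on framing: the paper deliberately treats ``asymptotically free'' as an abstract placeholder and, in its own reasoning, only ever uses the sufficient condition of Proposition~\ref{prop:proj-asymp-free}; your proof instead invokes the definition directly, meaning convergence of all alternating mixed normalized traces to those of a genuinely free pair of projections. That is the intended reading, and your use of \eqref{eq:lim-alpha} and \eqref{eq:lim-beta} is exactly what is needed to pin down the Bernoulli marginals of that free pair, since asymptotic freeness alone does not specify them. The approach you give buys simplicity and transparency; the companion paper's more elaborate combinatorial machinery is designed to control not only weak convergence but also extreme eigenvalues (as in Theorem~\ref{thm:edge-km}), which is overkill for this particular statement.
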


Finally, towards using these results for our purposes, let us recall the relationship between the MANOVA laws and the Kesten-McKay laws.
This follows by a direct calculation from the definitions.

\begin{proposition}
    \label{prop:manova-km}
    Suppose $X \sim \MANOVA(\frac{1}{2}, \beta)$ with $\beta \leq \frac{1}{2}$.
    Then, $\frac{2}{\beta}(X - \frac{1}{2})$ conditional on $X \neq 0$ has the law $\KM(1 / \beta)$.
\end{proposition}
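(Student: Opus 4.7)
The plan is a direct calculation via change of variables. First, I will substitute $\alpha = 1/2$ into Definition~\ref{def:manova} to simplify the $\MANOVA(1/2, \beta)$ law. This gives $r_\pm = \frac{1}{2} \pm \sqrt{\beta(1-\beta)}$ (since $\alpha + \beta - 2\alpha\beta = 1/2$), an atom at $0$ with mass $1 - \beta$ (since $\beta \leq 1/2$), and no atom at $1$ (since $\beta - 1/2 \leq 0$).

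Next, conditioning on $X \neq 0$ removes the atom at $0$ and rescales the continuous part by $1/\beta$. I then apply the affine change of variables $y = \frac{2}{\beta}(x - \frac{1}{2})$, so $x = \frac{1}{2} + \frac{\beta y}{2}$ and $dx = \frac{\beta}{2} dy$. The support $[r_-, r_+]$ maps to $[-2\sqrt{(1-\beta)/\beta}, 2\sqrt{(1-\beta)/\beta}] = [-2\sqrt{v - 1}, 2\sqrt{v - 1}]$ where $v = 1/\beta$, matching the support in the definition of $\KM(v)$.

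The main computational step is to rewrite the density's algebraic factors in terms of $y$ and $v$. Using $(r_+ - x)(x - r_-) = \beta(1-\beta) - (x - \tfrac{1}{2})^2$, this quantity becomes $\frac{\beta^2}{4}(4(v-1) - y^2)$. Similarly, $x(1 - x) = \frac{1}{4} - \frac{\beta^2 y^2}{4} = \frac{\beta^2}{4}(v^2 - y^2)$. Combining these with the conditioning factor $1/\beta$ and the Jacobian $\beta/2$, the density of $Y = \frac{2}{\beta}(X - \frac{1}{2})$ becomes
\begin{equation}
    \frac{1}{\beta} \cdot \frac{(\beta/2)\sqrt{4(v-1) - y^2}}{2\pi \cdot (\beta^2/4)(v^2 - y^2)} \cdot \frac{\beta}{2} \, dy = \frac{v \sqrt{4(v-1) - y^2}}{2\pi(v^2 - y^2)} \, dy,
\end{equation}
which is exactly the density of $\KM(v)$ when $v \geq 2$ (the regime $\beta \leq 1/2$, where $\KM(v)$ has no atoms). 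Since the constraint $\beta \leq 1/2$ ensures there is no atom at $1$ in the MANOVA law and no atoms in $\KM(v)$, nothing is left to account for.

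There is no real obstacle here; this is just bookkeeping in a change of variables. The only points to be careful about are (i) normalizing the continuous part correctly when conditioning on $X \neq 0$, (ii) identifying $v = 1/\beta$ throughout, and (iii) verifying that the parameter regime $\beta \leq \frac{1}{2} \Leftrightarrow v \geq 2$ precisely matches the regime in which $\KM(v)$ is purely absolutely continuous.
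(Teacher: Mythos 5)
Your proof is correct, and the paper gives no explicit proof (it only says ``This follows by a direct calculation from the definitions''), so your change-of-variables computation is precisely the intended argument. The conditioning factor $1/\beta$, the Jacobian $\beta/2$, the algebraic simplifications of $(r_+ - x)(x - r_-)$ and $x(1-x)$, and the check that the atoms vanish in the regime $\beta \le 1/2 \Leftrightarrow v \ge 2$ are all verified.
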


\section{Proof of Theorem~\ref{thm:conj-equiv}}
\label{sec:prf:thm:conj-equiv}

To show that Conjecture~\ref{conj:char-sum} implies Conjecture~\ref{conj:weak-conv}, we will introduce the following intermediate conjecture.
For the sake of brevity, let us write $\bP_p \colonequals \bP^{(+)}_p$ below.

\begin{conjecture}[Asymptotic freeness, degree $a$]
    \label{conj:asymp-free}
    For all $p$ sufficiently large, let $I \in \sI_a(G_p)$.
    Let $\bE_{V(G_{p, I})} \in \RR^{\FF_p \times \FF_p}$ be the diagonal matrix with $(\bE_{V(G_{p, I})})_{xx} \colonequals \One\{x \in V(G_{p, I})\}$.
    Then, the sequence of pairs of (deterministic) matrices $(\bP_{p}, \bE_{V(G_{p, I})})$ are asymptotically free as $p \to \infty$.
\end{conjecture}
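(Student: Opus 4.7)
The plan is to apply the sufficient condition for asymptotic freeness of a pair of projections from Proposition~\ref{prop:proj-asymp-free}, with $\bP_1^{(p)} \colonequals \bP_p$ and $\bP_2^{(p)} \colonequals \bE_{V(G_{p,I})}$. Both matrices are orthogonal projections: the former by Proposition~\ref{prop:paley-spectral}, and the latter because it is diagonal with $\{0,1\}$ entries. The density limits are routine: $\frac{1}{p}\Tr(\bP_p) = \frac{p-1}{2p} \to \frac{1}{2}$, and a short expansion of $\One\{x \not\sim y\}$ in terms of $\chi$ followed by Weil's bound (Theorem~\ref{thm:weil}) gives $|V(G_{p,I})| = 2^{-a} p + O_a(\sqrt{p})$, so $\frac{1}{p}\Tr(\bE_{V(G_{p,I})}) \to 2^{-a}$. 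I therefore take $\alpha = \frac{1}{2}$ and $\beta = 2^{-a}$.

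The heart of the argument is to verify the mixed-moment condition
\begin{equation}
    \frac{1}{p}\Tr\bigl[(\bP_p - \tfrac{1}{2}\bm I_{\FF_p})(\bE_{V(G_{p,I})} - 2^{-a}\bm I_{\FF_p})\bigr]^k \;\xrightarrow{\,p \to \infty\,}\; 0 \qquad \text{for every } k \geq 1.
\end{equation}
Using Proposition~\ref{prop:paley-spectral} I rewrite $\bP_p - \tfrac{1}{2}\bm I_{\FF_p} = -\tfrac{1}{2}\what\one\what\one^\top + \tfrac{1}{2\sqrt{p}}\bS_{G_p}$, and I replace $\bE_{V(G_{p,I})}$ by the diagonal matrix $\wtil\bD$ with $\wtil\bD_{xx} \colonequals 2^{-a}\prod_{y \in I}(1 - \chi(x-y))$, which agrees with $\bE_{V(G_{p,I})}$ off the $a$ coordinates of $I$. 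Expanding the product defining $\wtil\bD$ yields
\begin{equation}
    \wtil\bD - 2^{-a}\bm I_{\FF_p} = 2^{-a}\sum_{\emptyset \neq J \subseteq I}(-1)^{|J|}\,\bC_J, \qquad (\bC_J)_{xx} \colonequals \prod_{y \in J}\chi(x-y).
\end{equation}
Substituting both rewrites into the $k$-th moment and applying \eqref{eq:Sigma-Tr} together with cyclicity of the trace, the ``main'' part of the moment (keeping only $\tfrac{1}{2\sqrt p}\bS_{G_p}$ on one side and $\wtil\bD - 2^{-a}\bm I$ on the other) becomes
\begin{equation}
    \frac{1}{2^{(a+1)k}\,p^{k/2+1}}\sum_{\emptyset \neq J_1, \dots, J_k \subseteq I}(-1)^{|J_1|+\cdots+|J_k|}\,\Sigma(J_1,\dots,J_k).
\end{equation}
Since $J_1 \cup \cdots \cup J_k \subseteq I$ has cardinality at most $a$, Conjecture~\ref{conj:char-sum} at degree $a$ bounds each summand by $o(p^{k/2+1})$, uniformly in the $J_i$; and since there are $O_{a,k}(1)$ summands, the whole expression tends to $0$.

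It remains to check that the ``low-rank corrections'' I absorbed above contribute negligibly. Expanding $[(\bP_p - \tfrac{1}{2}\bm I)(\bE_{V(G_{p,I})} - 2^{-a}\bm I)]^k$ multilinearly, any term that uses $\what\one\what\one^\top$ or the rank-$a$ discrepancy $\bE_{V(G_{p,I})} - \wtil\bD$ in at least one factor has rank $O_{a,k}(1)$. Using $|\Tr(\bA)| \leq \rank(\bA)\,\|\bA\|$ together with $\|\bS_{G_p}\| = \sqrt{p}$ and $\|\wtil\bD - 2^{-a}\bm I\|, \|\bE_{V(G_{p,I})} - 2^{-a}\bm I\| \leq 1$, each such term contributes $O_{a,k}(1)$ to $\Tr[\cdots]^k$ and hence $O_{a,k}(p^{-1})$ after dividing by $p$. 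I expect the main obstacle to be purely bookkeeping rather than conceptual: the multilinear expansion and rank/norm accounting need to be organized cleanly enough that the necklace character sums of Conjecture~\ref{conj:char-sum} are the only asymptotically surviving contributions. Once this is done, Proposition~\ref{prop:proj-asymp-free} delivers asymptotic freeness directly, and Proposition~\ref{prop:proj-weak-conv} together with Proposition~\ref{prop:manova-km} then extract Conjecture~\ref{conj:weak-conv} as a corollary.
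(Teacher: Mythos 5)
Your proposal correctly establishes the conditional implication Conjecture~\ref{conj:char-sum} $\Rightarrow$ Conjecture~\ref{conj:asymp-free}, which is exactly what the paper proves as the core step inside Theorem~\ref{thm:conj-equiv} (the unconditional conjecture remains open for $a \geq 2$), and your argument is essentially the paper's: the same density limits $\alpha = \tfrac{1}{2}$, $\beta = 2^{-a}$, the same decomposition of $\bP_p - \tfrac{1}{2}\bm I_{\FF_p}$ into $\tfrac{1}{2\sqrt{p}}\bS_{G_p}$ minus a rank-one piece, the same reduction of the centered indicator to a diagonal character-sum matrix plus a rank-$a$ correction supported on $I$, and the same identification of the dominant trace with the necklace sums $\Sigma(J_1,\dots,J_k)$. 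The only cosmetic difference is that you bound all low-rank corrections in one step via $|\Tr(\bA)| \leq \rank(\bA)\,\|\bA\|$, whereas the paper peels off the $\what\one\what\one^\top$ and $\bE_I$ terms in two successive stages; the estimates are the same.
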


\begin{lemma}
    \label{lem:asymp-free-weak-conv}
    If Conjecture~\ref{conj:asymp-free} holds at degree $a$, then Conjecture~\ref{conj:weak-conv} holds at degree $a$.
\end{lemma}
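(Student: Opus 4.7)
The plan is to apply Proposition~\ref{prop:proj-weak-conv} to the projection pair $(\bP_p, \bE_{V(G_{p,I})})$, producing a MANOVA weak limit for the $p \times p$ matrix $\bE_{V(G_{p,I})} \bP_p \bE_{V(G_{p,I})}$; then translate that into a weak limit for the subgraph adjacency matrix $\bA_{G_{p,I}}$ via the spectral identity in Proposition~\ref{prop:paley-spectral}, using Proposition~\ref{prop:hoffman-wielandt} to absorb lower-order error terms; and finally invoke Proposition~\ref{prop:manova-km} to rewrite the MANOVA limit as a Kesten-McKay law.

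First I would verify the two normalized-trace hypotheses of Proposition~\ref{prop:proj-weak-conv}. The limit $\frac{1}{p}\Tr \bP_p \to \frac{1}{2}$ is immediate from $\rank \bP_p = (p-1)/2$. For $\frac{1}{p}\Tr \bE_{V(G_{p,I})} = |V(G_{p,I})|/p \to 2^{-a}$, I would expand $|V(G_{p,I})| = \sum_{x \notin I} \prod_{y \in I} \tfrac{1 - \chi(x-y)}{2}$ and apply Weil's bound (Theorem~\ref{thm:weil}) to each of the $2^a - 1$ non-empty-subset Weil sums that appear; each is $O_a(\sqrt{p})$ because $\prod_{y \in S}(t - y)$ has distinct linear factors and hence is not a constant multiple of a square. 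This yields $|V(G_{p,I})| = p \cdot 2^{-a} + O_a(\sqrt{p})$, giving the required limit in $(0,1)$.

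Next comes the core algebraic computation connecting the projection to the adjacency matrix. Proposition~\ref{prop:paley-spectral} together with $\bS_{G_p} = 2\bA_{G_p} - \one\one^{\top} + \bm I_{\FF_p}$ rearranges to
\begin{equation*}
    \bP_p = \frac{1}{\sqrt{p}} \bA_{G_p} + \frac{1}{2}\left(1 + \frac{1}{\sqrt{p}}\right) \bm I_{\FF_p} - \left(\frac{1}{2\sqrt{p}} + \frac{1}{2p}\right) \one\one^{\top}.
\end{equation*}
Writing $\bE := \bE_{V(G_{p,I})}$ and letting $\widetilde{\bP}_p$ denote the restriction of $\bE \bP_p \bE$ to the $V(G_{p,I})$-block, since $\bE\bA_{G_p}\bE$ restricts to $\bA_{G_{p,I}}$ and $\bE\one$ restricts to the all-ones vector on the block, I obtain
\begin{equation*}
    2^{a+1}\bigl( \widetilde{\bP}_p - \tfrac{1}{2}\bm I \bigr) = \frac{2^{a+1}}{\sqrt{p}}\, \bA_{G_{p,I}} + \frac{2^a}{\sqrt{p}}\, \bm I - 2^{a+1}\left(\frac{1}{2\sqrt{p}} + \frac{1}{2p}\right) \one\one^{\top}.
\end{equation*}
The last two terms have nuclear norm $O_a(|V(G_{p,I})|/\sqrt{p})$, so Proposition~\ref{prop:hoffman-wielandt} guarantees that the e.s.d.\ of $\frac{2^{a+1}}{\sqrt{p}} \bA_{G_{p,I}}$ has the same weak limit as that of $2^{a+1}(\widetilde{\bP}_p - \tfrac{1}{2}\bm I)$.

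Finally I would identify the limit. Conjecture~\ref{conj:asymp-free} combined with the two trace limits and Proposition~\ref{prop:proj-weak-conv} says the e.s.d.\ of the $p \times p$ matrix $\bE \bP_p \bE$ converges weakly to $\MANOVA(\frac{1}{2}, 2^{-a})$. Since $\bE\bP_p\bE$ has at least $p - |V(G_{p,I})|$ hard zeros from $\ker \bE$ and $\MANOVA(\frac{1}{2},2^{-a})$ has an atom of mass exactly $1 - 2^{-a}$ at $0$, which matches $(p - |V(G_{p,I})|)/p$ in the limit, the e.s.d.\ of the $|V(G_{p,I})| \times |V(G_{p,I})|$ block $\widetilde{\bP}_p$ converges weakly to the conditional law of $X \sim \MANOVA(\frac{1}{2}, 2^{-a})$ given $X \neq 0$. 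Proposition~\ref{prop:manova-km} with $\beta = 2^{-a}$ identifies the pushforward of that conditional law under $x \mapsto 2^{a+1}(x - \tfrac{1}{2})$ as exactly $\KM(2^a)$, proving Conjecture~\ref{conj:weak-conv}. The most delicate step is the bookkeeping switch between the $p \times p$ ``ambient'' e.s.d.\ (where freeness gives a clean MANOVA statement) and the $|V(G_{p,I})| \times |V(G_{p,I})|$ e.s.d.\ of the induced-subgraph adjacency matrix; the pieces fit because Proposition~\ref{prop:manova-km} is built precisely to perform this conditioning-and-rescaling step.
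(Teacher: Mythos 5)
Your proposal is correct and follows essentially the same route as the paper's proof: verify the two normalized-trace limits (via $\rank \bP_p = (p-1)/2$ and a Weil-bound computation of $|V(G_{p,I})|$, which is the paper's Proposition~\ref{prop:loc-graph-vertices}), invoke Proposition~\ref{prop:proj-weak-conv} to get the $\MANOVA(\tfrac12, 2^{-a})$ limit for $\bE\bP_p\bE$, account for the $p - |V(G_{p,I})|$ zero eigenvalues from $\ker\bE$ to pass to the block $\widetilde{\bP}_p$ (which the paper does by introducing the truncation $\bV$ with $\bE = \bV^\top\bV$), apply Proposition~\ref{prop:manova-km} with $\beta = 2^{-a}$ to identify the rescaled conditional law as $\KM(2^a)$, and finally use Proposition~\ref{prop:hoffman-wielandt} with an $O(\sqrt p)$ nuclear-norm bound on the $\bm I$ and rank-one correction terms to transfer from $\widetilde{\bP}_p - \tfrac12\bm I$ to $\tfrac{1}{\sqrt p}\bA_{G_{p,I}}$. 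The only cosmetic difference is that you organize the Hoffman--Wielandt comparison directly on the $|V(G_{p,I})|$-dimensional block, while the paper first states the nuclear-norm bound at the ambient $p\times p$ level and then conjugates by $\bV$; both give the same $O(\sqrt p) = o(|V(G_{p,I})|)$ estimate.
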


\noindent
In proving this, we will need to control the rank of the $\bE_{V(G_{p, I})}$, which is $\Tr(\bE_{V(G_{p, I})}) = |V(G_{p, I})|$.
This is achieved by the following character sum computation.
\begin{proposition}[Size of localization]
    \label{prop:loc-graph-vertices}
    For any $I \in \sI_a(G_p)$,
    \begin{equation}
        \left||V(G_{p, I})| - \frac{p}{2^a}\right| \leq (a - 1)\sqrt{p} + \frac{a}{2}
    \end{equation}
\end{proposition}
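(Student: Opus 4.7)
The plan is to express $|V(G_{p,I})|$ as a character sum using the Legendre symbol, then apply Weil's bound (Theorem~\ref{thm:weil}) term-by-term after expanding. A vertex $x \in V(G_{p,I})$ precisely when $x \notin I$ and $\chi(x - z) = -1$ for every $z \in I$. Since $x \neq z$ for $x \notin I$ and $z \in I$, the indicator $\One\{\chi(x-z) = -1\}$ equals $(1 - \chi(x-z))/2$, so
\begin{equation*}
    |V(G_{p,I})| = \frac{1}{2^a}\sum_{x \in \FF_p \setminus I} \prod_{z \in I}(1 - \chi(x-z)).
\end{equation*}

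Next I would extend the sum to all of $\FF_p$ and track the correction from $x \in I$. Here is where independence of $I$ enters: for $x = z_0 \in I$, the factor at $z = z_0$ is $(1 - \chi(0))/2 = 1/2$, while each of the $a - 1$ other factors is $(1 - \chi(z_0 - z))/2 = 1$ since $\chi(z_0 - z) = -1$ by independence. The extra contribution from $x \in I$ is therefore exactly $a/2$, yielding
\begin{equation*}
    |V(G_{p,I})| = \frac{1}{2^a}\sum_{x \in \FF_p}\prod_{z \in I}(1 - \chi(x-z)) - \frac{a}{2}.
\end{equation*}

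Now expand the product and use multiplicativity of $\chi$:
\begin{equation*}
    \sum_{x \in \FF_p}\prod_{z \in I}(1 - \chi(x-z)) = \sum_{S \subseteq I}(-1)^{|S|} \sum_{x \in \FF_p} \chi\!\left(\prod_{z \in S}(x - z)\right).
\end{equation*}
The $S = \emptyset$ term is $p$, the singleton terms vanish because $\sum_{x}\chi(x - z) = 0$, and for $|S| = s \geq 2$ the polynomial $\prod_{z \in S}(x - z)$ is squarefree (the $z$ being distinct), hence not of the form $c \cdot r(x)^2$, so Weil's bound gives
\begin{equation*}
    \left|\sum_{x \in \FF_p} \chi\!\left(\prod_{z \in S}(x - z)\right)\right| \leq (s - 1)\sqrt{p}.
\end{equation*}

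The final step is purely arithmetic bookkeeping. By the triangle inequality,
\begin{equation*}
    \left||V(G_{p,I})| - \frac{p}{2^a}\right| \leq \frac{\sqrt{p}}{2^a}\sum_{s = 2}^{a}\binom{a}{s}(s - 1) + \frac{a}{2},
\end{equation*}
and a short computation using $\sum_{s=0}^a s\binom{a}{s} = a\, 2^{a-1}$ and $\sum_{s=0}^a \binom{a}{s} = 2^a$ gives $\sum_{s=2}^a \binom{a}{s}(s-1) = 2^{a-1}(a - 2) + 1 \leq 2^a(a - 1)$, which yields the claimed bound. There is no serious obstacle here: Weil's theorem handles the only non-trivial input, and the squarefreeness condition (the only place one could slip up) is automatic once one uses that the $z \in S$ are distinct elements of $\FF_p$.
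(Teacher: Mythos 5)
Your proof is correct and follows essentially the same route as the paper's: express $|V(G_{p,I})|$ via the indicator $(1-\chi(x-z))/2$, extend the sum over $x$ to all of $\FF_p$ picking up the $+a/2$ correction (which is where independence of $I$ is used), expand the product into a sum over subsets $S \subseteq I$, and bound each non-trivial term by Weil. Your binomial-sum bookkeeping is slightly sharper at the intermediate step than the paper's crude count of $2^a - 1$ terms each bounded by $(a-1)\sqrt{p}$, but both arrive at the same stated bound, and the observation that squarefreeness is automatic from distinctness of the elements of $S$ is exactly the point the paper leaves implicit.
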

\begin{proof}
    We have
    \begin{align*}
      |V(G_{p, I})|
      &= -\frac{a}{2} + \sum_{x \in \FF_p} \prod_{y \in I} \left(\frac{1 - \chi(x - y)}{2}\right) \\
      &= \frac{p}{2^a} - \frac{a}{2} + \frac{1}{2^a} \sum_{\substack{A \subseteq I \\ A \neq \emptyset}}(-1)^{|A|}\left(\sum_{x \in \FF_p} \prod_{y \in A} \chi(x - y)\right). \numberthis
    \end{align*}
    The result then follows since there are $2^k - 1$ terms in the last sum, and each one is a Weil sum bounded in magnitude by $(a - 1)\sqrt{p}$ by Theorem~\ref{thm:weil}.
\end{proof}

\begin{proof}[Proof of Lemma~\ref{lem:asymp-free-weak-conv}]
    We have
    \begin{align}
      \Tr(\bP_p) &= \frac{p - 1}{2}, \tag{by Proposition~\ref{prop:paley-spectral}} \\
      \left|\Tr(\bE_{V(G_{p, I})}) - \frac{p}{2^a}\right| &= o(p). \tag{by Proposition~\ref{prop:loc-graph-vertices}}
    \end{align}
    Thus, Proposition~\ref{prop:proj-weak-conv} implies that, if Conjecture~\ref{conj:asymp-free} holds at degree $a$, then the e.s.d.'s of $\bE_{V(G_{p, I})} \bP_p \bE_{V(G_{p, I})}$ converge weakly to $\MANOVA(\frac{1}{2}, \frac{1}{2^a})$.

    Let $\bV \in \RR^{V(G_{p, I}) \times \FF_p}$ be the truncation of $\bE_{V(G_{p, I})}$ formed by removing the rows that are identically zero, so that $\bE_{V(G_{p, I})} = \bV^{\top}\bV$ and $\bA_{G_{p, I}} = \bV \bA_{G_p} \bV^{\top}$.
    By Propositions~\ref{prop:loc-graph-vertices} and~\ref{prop:manova-km}, the e.s.d.\ of $2^{a + 1}\bV(\bP_p - \frac{1}{2}\bm I_p)\bV^{\top}$ converges weakly to $\KM(2^a)$.
    We note that replacing $\bE_{V(G_{p, I})}$ by $\bV$ removes $p - \rank(\bE_{V(G_{p, I})}) = (1 - \frac{1}{2^a} + o(1))p$ zero eigenvalues, which has the same effect as the conditioning of $X$ in Proposition~\ref{prop:manova-km}.

    Now, by rearranging in Proposition~\ref{prop:paley-spectral}, we have
    \begin{equation}
        \frac{1}{\sqrt{p}}\bA_{G_p} = \bP^{(+)} - \frac{1}{2}\bm I_p - \frac{1}{2\sqrt{p}}\bm I + \frac{\sqrt{p} + 1}{2}\what{\one}\, \what{\one}^{\top}.
    \end{equation}
    In particular, we have
    \begin{equation}
        \left\| \frac{1}{\sqrt{p}}\bA_{G_p} - \left(\bP^{(+)} - \frac{1}{2}\bm I_p\right) \right\|_* = O(\sqrt{p}) = o(p),
    \end{equation}
    and so we also have
    \begin{equation}
        \bigg\| \frac{2^{a + 1}}{\sqrt{p}}\underbrace{\bV\bA_{G_p}\bV^{\top}}_{= \bA_{G_{p, I}}} - 2^{a + 1}\bV\left(\bP^{(+)} - \frac{1}{2}\bm I_p\right)\bV^{\top} \bigg\|_* = O(\sqrt{p}) = o(p),
    \end{equation}
    and the result follows by Proposition~\ref{prop:hoffman-wielandt}.
\end{proof}

\begin{proof}[Proof of Theorem~\ref{thm:conj-equiv}]
    By Lemma~\ref{lem:asymp-free-weak-conv}, it suffices to show under the assumptions of the Theorem that the sequence of pairs $(\bP_p, \bE_{V(G_{p, I})})$ are asymptotically free.
    By Proposition~\ref{prop:proj-asymp-free}, it suffices in turn to show that, for each $k \geq 1$,
    \begin{equation}
        \lim_{p \to \infty} \frac{1}{p} \underbrace{\Tr\left(\left(\bP_p - \frac{1}{2}\bm I\right)\left(\bE_{V(G_{p, I})} - \frac{1}{2^a}\bm I\right)\right)^k}_{\equalscolon T_k} = 0,
    \end{equation}
    where we substitute the values of $\alpha$ and $\beta$ from the proof of Lemma~\ref{lem:asymp-free-weak-conv} above.

Let us write
\begin{align}
  \wtil{\bE} &\colonequals \bE_{V(G_{p, I})} - \frac{1}{2^a}\bm I_{\FF_p}, \\
  \wtil{\bP} &\colonequals \bP_p - \frac{1}{2}\bm I_{\FF_p}, \\
  \bS &\colonequals \bS_{G_p}.
\end{align}
Rearranging in Proposition~\ref{prop:paley-spectral}, we have
\begin{equation}
    \wtil{\bP} = \frac{1}{2}\left(\frac{1}{\sqrt{p}}\bS - \what{\one}\what{\one}^{\top}\right).
\end{equation}

Substituting and expanding, we find
\begin{equation}
  T_k = \frac{1}{2^k}\sum_{s_1, \dots, s_k \in \{0, 1\}} \Tr \bH_{s_1}^{(0)}\wtil{\bE} \cdots \bH_{s_k}^{(0)}\wtil{\bE},
\end{equation}
where
\begin{align}
  \bH_0^{(0)} &\colonequals \frac{1}{\sqrt{p}} \bS \equalscolon \what{\bS}, \\
  \bH_1^{(0)} &\colonequals \what{\one}\what{\one}^{\top}.
\end{align}
Note that all three matrices $\bH_0^{(0)} = \what{\bS}, \bH_1^{(0)},$ and $\wtil{\bE}$ have operator norm at most 1.
For any term in the sum with some $s_i = 1$, we may then bound
\begin{align*}
  \left|\Tr \bH_{s_1}^{(0)}\wtil{\bE} \cdots \bH_{s_k}^{(0)}\wtil{\bE}\right|
  &\leq \left|\what{\one}^{\top} \left(\wtil{\bE} \bH_{s_{i + 1}}^{(0)} \cdots \wtil{\bE} \bH_{s_{k}}^{(0)} \wtil{\bE} \bH_{s_{1}}^{(0)} \cdots \wtil{\bE} \bH_{s_{i - 1}}^{(0)}\right) \what{\one}\right| \\
  &\leq \|\wtil{\bE} \bH_{s_{i + 1}}^{(0)} \cdots \wtil{\bE} \bH_{s_{k}}^{(0)} \wtil{\bE} \bH_{s_{1}}^{(0)} \cdots \wtil{\bE} \bH_{s_{i - 1}}^{(0)}\| \\
  &\leq 1. \numberthis
\end{align*}
Thus we find
\begin{equation}
    |T_k| \leq 1 + \frac{1}{2^k}\big|\underbrace{\Tr(\what{\bS} \wtil{\bE})^k}_{\equalscolon T_k^{(1)}}\big|.
\end{equation}

Now let us make a similar adjustment to the matrix $\wtil{\bE}$.
We may express, for all $x \in \FF_p$,
\begin{equation}
    (\bE_{V(G_{p, I})})_{xx} = \prod_{z \in I} \frac{1 - \chi(x - z)}{2} - \frac{1}{2}\One\{x \in I\},
\end{equation}
and thus
\begin{equation}
    \wtil{E}_{xx} = \left(\bE_{V(G_{p, I})} - \frac{1}{2^a}\bm I\right)_{xx} = \frac{1}{2^a}\sum_{\substack{A \subseteq I \\ A \neq \emptyset}}\left(-1\right)^{|A|} \prod_{z \in A} \chi(x - z) - \frac{1}{2}\One\{x \in I\}.
\end{equation}
Let us write $\bD \in \RR^{\FF_p \times \FF_p}$ for the diagonal matrix with entries
\begin{equation}
    D_{xx} \colonequals \frac{1}{2^a}\sum_{\substack{A \subseteq I \\ A \neq \emptyset}}\left(-1\right)^{|A|} \prod_{z \in A} \chi(x - z) = \left\{\begin{array}{rl} \frac{1}{2} - \frac{1}{2^a} & \text{if } x \in I, \\ 1 - \frac{1}{2^a} & \text{if } x \in V(G_{p, I}), \\ -\frac{1}{2^a} & \text{otherwise.} \end{array}\right.
\end{equation}
We then have
\begin{equation}
    \wtil{\bE} = \bD - \frac{1}{2}\bE_I.
\end{equation}
Substituting this into the expression for $T_k^{(1)}$, we have
\begin{equation}
  T_k^{(1)}
  = \sum_{s_1, \dots, s_k \in \{0, 1\}} \Tr \bH_{s_1}^{(1)}\what{\bS} \cdots \bH_{s_k}^{(1)}\what{\bS},
\end{equation}
where
\begin{align}
  \bH_0^{(1)} &\colonequals \bD, \\
  \bH_1^{(1)} &\colonequals -\frac{1}{2}\bE_I.
\end{align}
Note again that all three matrices $\bH_0^{(1)}, \bH_1^{(1)}$, and $\what{\bS}$ have operator norm at most 1.
For any term in the sum with some $s_i = 1$, we may use that $\bE_I = \sum_{x \in I} \be_x\be_x^{\top}$ to bound
\begin{align*}
  \left|\Tr \bH_{s_1}^{(1)}\what{\bS} \cdots \bH_{s_k}^{(1)} \what{\bS}\right|
  &\leq \sum_{x \in I} \left|\be_x^{\top} \left(\what{\bS} \bH_{s_{i + 1}}^{(1)} \cdots \what{\bS} \bH_{s_{k}}^{(1)} \what{\bS} \bH_{s_{1}}^{(1)} \cdots \what{\bS} \bH_{s_{i - 1}}^{(1)}\right) \be_x\right| \\
  &\leq a\|\what{\bS} \bH_{s_{i + 1}}^{(1)} \cdots \what{\bS} \bH_{s_{k}}^{(1)} \what{\bS} \bH_{s_{1}}^{(1)} \cdots \what{\bS} \bH_{s_{i - 1}}^{(1)}\| \\
  &\leq a. \numberthis
\end{align*}
Thus, we find
\begin{equation}
    |T_k^{(1)}| \leq a2^k + \big| \underbrace{\Tr(\bD \what{\bS})^k}_{\equalscolon T_k^{(2)}} \big|.
\end{equation}

Finally, we may apply our assumed character sum estimate to $T_k^{(2)}$.
For $Z \subseteq \FF_p$, let us write $\bD_Z$ for the diagonal matrix with entries
\begin{equation}
    (\bD_Z)_{xx} = \prod_{z \in Z} \chi(x - z).
\end{equation}
Then, the above says
\begin{equation}
    \bD = \frac{1}{2^a} \sum_{\substack{Z \subseteq I \\ Z \neq \emptyset}} (-1)^{|Z|} \bD_Z.
\end{equation}
Expanding this sum in $T_k^{(2)}$,
\begin{equation}
    |T_k^{(2)}| \leq \frac{1}{2^{ak}p^{k/2}} \sum_{\substack{Z_1, \dots, Z_k \subseteq I \\ Z_i \neq \emptyset}} |\Tr(\bD_{Z_1} \bS \cdots \bD_{Z_k} \bS)|.
\end{equation}
Expanding each trace, we find that each term is precisely a necklace character sum,
\begin{equation}
    \Tr(\bD_{Z_1} \bS \cdots \bD_{Z_k} \bS) = \Sigma(Z_1, \dots, Z_k).
\end{equation}
And, we have $Z_1 \cup \cdots \cup Z_k \subseteq I$, so $|Z_1 \cup \cdots \cup Z_k| \leq a$.
Thus, by our assumption of Conjecture~\ref{conj:char-sum} at degree $a$, we find
\begin{equation}
    \lim_{p \to \infty} \frac{1}{p} |T_k^{(2)}| = 0,
\end{equation}
and substituting into the previous expressions for $T_k^{(1)}$ and then $T_k$ gives the result.
\end{proof}

\section{Proof of Theorem~\ref{thm:level1}}
\label{sec:pf:thm:level1}

The degree 1 setting of Conjecture~\ref{conj:char-sum} is in terms of non-empty subsets $Z_1, \dots, Z_k \subseteq \FF_p$ with $|Z_i| = 1$ and $|Z_1 \cup \cdots \cup Z_k| \leq 1$.
The only way that this can happen is if $Z_1 = \cdots = Z_k = \{z\}$ for some $z \in \FF_p$.
In this case, we have
\begin{align*}
  \Sigma
  &\colonequals \Sigma(\{z\}, \dots, \{z\}) \\
  &= \sum_{x_1, \dots, x_k \in \FF_p} \chi(x_2 - x_1) \cdots \chi(x_k - x_{k - 1}) \chi(x_1 - x_k) \prod_{i = 1}^k \chi(x_i - z) \\
  &= \sum_{x_1, \dots, x_k \in \FF_p} \chi(x_2 - x_1) \cdots \chi(x_k - x_{k - 1}) \chi(x_1 - x_k) \prod_{i = 1}^k \chi(x_i), \numberthis
\end{align*}
the last step following by changing variables $x_i \rightarrow x_i + z$, effectively allowing us to assume without loss of generality that $z = 0$.
We give two approaches, appealing to different prior work in number theory.
In this case these are actually equivalent by elementary manipulations (essentially because Kloosterman sums and products of Gauss sums are each other's Fourier transforms, as discussed in \cite[Chapter 4]{Katz-1988-GaussKloostermanMonodromy}), but we include both in case one treatment may prove easier to generalize to other sums.

\paragraph{Approach 1: Moments of Jacobi sums}
The specific bound we will show is
\begin{equation}
    |\Sigma| \leq 2k p^{(k + 1)/2} + 4p^{k/2} + 1.
\end{equation}
When we write expressions of the form $\chi(x^{-1})$ below, the inversion is meant to be taken in $\FF_p^{\times}$.
Since $\chi(0) = 0$, the terms in $\Sigma$ where any $x_i = 0$ will not contribute, so we have
\begin{align*}
  \Sigma
  &= \sum_{x_1, \dots, x_k \in \FF_p^{\times}} \chi(x_2 - x_1) \cdots \chi(x_k - x_{k - 1}) \chi(x_1 - x_k) \prod_{i = 1}^k \chi(x_i) \\
  &= \sum_{x_1, \dots, x_k \in \FF_p^{\times}} \chi(x_2 - x_1) \cdots \chi(x_k - x_{k - 1}) \chi(x_1 - x_k) \prod_{i = 1}^k \chi(x_i^{-1})
    \intertext{and using multiplicativity of $\chi$ to combine pairs of terms,}
  &= \sum_{x_1, \dots, x_k \in \FF_p^{\times}} \chi(1 - x_2x_1^{-1}) \cdots \chi(1 - x_kx_{k-1}^{-1}) \chi(1 - x_1x_k^{-1}) \\
  &= (p - 1) \sum_{\substack{x_1, \dots, x_k \in \FF_p \\ x_1 \cdots x_k = 1}} \chi(1 - x_1) \cdots \chi(1 - x_k)
  \intertext{Now, taking a Fourier transform of $\One\{x_1 \cdots x_k = 1\}$ using Proposition~\ref{prop:char-fourier},}
  &= \sum_{\psi \in \widehat{\FF_p^{\times}}}\sum_{x_1, \dots, x_k \in \FF_p} \chi(1 - x_1) \cdots \chi(1 - x_k) \psi(x_1 \cdots x_k) \\
  &= \sum_{\psi \in \widehat{\FF_p^{\times}}}\left(\sum_{x \in \FF_p} \chi(1 - x) \psi(x)\right)^k \\
  &= \sum_{\psi \in \widehat{\FF_p^{\times}}} J(\chi, \psi)^k \\
  &= (-1)^k + p^{k/2}\sum_{\psi \in \widehat{\FF_p^{\times}} \setminus \{\chi, \eps\}}g(\psi)^kg(\chi\psi)^{-k}, \numberthis
\end{align*}
where we finish by Proposition~\ref{prop:jacobi}.
Finally, the result now follows by an application of Katz's bound of Theorem~\ref{thm:katz-monomial-bound}.

\begin{remark}
    Similar results on the moments of Jacobi sums are proved in \cite{KZ-1996-DistributionGaussJacobiSums, Xi-2018-EquidistributionJacobiSums, LZZ-2018-DistributionJacobiSums}; however, they involve summing over pairs of characters $\sum_{\psi_1, \psi_2} J(\psi_1, \psi_2)^k$, whereas our situation requires us to fix $\psi_1 = \chi$.
\end{remark}

\paragraph{Approach 2: Twisted moments of Kloosterman sums}
Using an alternative method we can show the slightly tighter
\begin{equation}
    \label{eq:level1-explicit-bound}
    |\Sigma| \leq k p^{(k + 1)/2}.
\end{equation}
We start similarly to before, but now use the Gauss sum expansion from Proposition~\ref{prop:char-fourier}:
\begin{align*}
  \Sigma
  &= (p - 1) \sum_{\substack{x_1, \dots, x_k \in \FF_p \\ x_1 \cdots x_k = 1}} \chi(1 - x_1) \cdots \chi(1 - x_k) \\
  &= \frac{p - 1}{p^{k/2}}  \sum_{\substack{x_1, \dots, x_k \in \FF_p \\ x_1 \cdots x_k = 1}} \sum_{y_1, \dots, y_k \in \FF_p} \chi(y_1) \cdots \chi(y_k) e_p\big((1 - x_1)y_1 + \cdots + (1 - x_k)y_k\big) \\
  &= \frac{p - 1}{p^{k/2}} \sum_{y_1, \dots, y_k \in \FF_p} \chi(y_1 \cdots y_k) e_p(y_1 + \cdots + y_k) \overline{\sum_{\substack{x_1, \dots, x_k \in \FF_p \\ x_1 \cdots x_k = 1}} e_p\big(x_1y_1 + \cdots + x_ky_k\big)} \\
  &= \frac{p - 1}{p^{k/2}} \sum_{y_1, \dots, y_k \in \FF_p} \chi(y_1 \cdots y_k) e_p(y_1 + \cdots + y_k) \overline{K_k(y_1 \cdots y_k)} \\
  &= \frac{p - 1}{p^{k/2}} \sum_{b \in \FF_p^{\times}} \chi(b)\overline{K_k(b)} \sum_{\substack{y_1, \dots, y_k \in \FF_p \\ y_1 \cdots y_k = b}} e_p(y_1 + \cdots + y_k) \\
  &= \frac{p - 1}{p^{k/2}} \sum_{b \in \FF_p^{\times}} \chi(b)|K_k(b)|^2. \numberthis
\end{align*}
The result then follows from Theorem~\ref{thm:twisted-kloosterman}.

\paragraph{The minimum eigenvalue}
Lastly, we must prove that Conjecture~\ref{conj:min-eval} holds at degree $a = 1$.
Note that, for $a = 1$, the statement of the conjecture says that
\begin{equation}
    \lim_{p \to \infty} \frac{-\lambda_{\min}(\bA_{G_{p, \{z\}}})}{\sqrt{p}} = 1.
\end{equation}
The ``$\geq$'' direction of this claim follows from the weak convergence we have just proved.
For the ``$\leq$'' direction, we may just observe that $-\lambda_{\min}(\bA_{G_{p, \{z\}}}) \leq \|\bA_{G_{p, \{z\}}}\| \leq \|\bA_{G_p}\| = \sqrt{p} + 1$ by Proposition~\ref{prop:paley-spectral}.

\begin{remark}
    \label{rem:Gp0-edges}
    The above argument tells us that, while the \emph{shape} of the spectrum of $\bA_{G_{p, \{z\}}}$ is different from that of $\bA_{G_p}$, its (left) \emph{edge} remains the same.
    See Figure~\ref{fig:example-loc} for an illustration.
    Intuitively, the reason for this is that the bottom eigenspace of $\bA_{G_p}$ has dimension roughly $\frac{1}{2}p$, and the restriction to the induced subgraph $G_{p, \{z\}}$ may be viewed as conjugation of $\bA_{G_p}$ by a projection to a subspace of dimension roughly $\frac{1}{2}p$.
    Thus, these two subspaces are just on the threshold of needing to have a non-trivial intersection by dimension counting.
\end{remark}

\subsection{Equidistribution interpretation}
\label{sec:equidistribution-level1}

Let us discuss an alternative, more number-theoretic interpretation of our proof.
We have mentioned earlier that the $G_{p, \{z\}}$ are all isomorphic (see Corollary~\ref{cor:transitivity}), so let us fix $z = 0$ here for the sake of convenience.
Then, $V(G_{p, \{0\}}) = \FF_p^{\times} \setminus \SS_p$, the set of quadratic non-residues.

As used previously by \cite{MMP-2019-LPCliquesPaley}, the graph $G_{p, \{0\}}$ is circulant (meaning its adjacency matrix is a circulant matrix) when its vertices are ordered as $g, g^3, \dots, g^{p - 2}$ for $g$ a multiplicative generator of $\FF_p^{\times}$.
Under this ordering, its adjacency matrix is circulant with first row given by the sequence $\One\{g^{2j + 1} - g \in \SS_p\} = \frac{1}{2}(1 + \chi(g^{2j + 1} - g)) - \frac{1}{2}\One\{j = \frac{p - 1}{2}\} = \frac{1}{2}(1 - \chi(g^{2j} - 1)) - \frac{1}{2}\One\{j = \frac{p - 1}{2}\}$ for $j = 1, \dots, \frac{p - 1}{2}$.
The eigenvalues may be computed by taking the Fourier transform of this sequence,
\begin{equation}
    -\frac{1}{2} + \frac{1}{2}\sum_{j = 1}^{(p - 1) / 2} (1 - \chi(g^{2j} - 1)) e_{p - 1}(2aj) \text{ for } a = 1, \dots, \frac{p - 1}{2}.
\end{equation}
Each function $g^j \mapsto e_{p - 1}(aj)$ is a multiplicative character of $\FF_p^{\times}$, so we may equivalently express the eigenvalues as a small adjustment of the character sums
\begin{equation}
    S(\chi, \psi) \colonequals \sum_{y \in \FF_p} \chi(y^2 - 1) \psi(y^2) \text{ for } \psi \in \what{\FF_p^{\times}},
\end{equation}
with the redundancy that under this enumeration every eigenvalue will appear twice, since $\psi(y^2) = \chi\psi(y^2)$.
One intuitive description is that the $S(\chi, \psi)$ are Jacobi sums restricted to summing over quadratic residues.
In any case, the weak convergence part of Theorem~\ref{thm:level1} may be viewed as an \emph{equidistribution} result on the sums $S(\chi, \psi)$ as $\psi$ varies while $\chi$ is fixed to be the Legendre symbol.
The result says that the limiting empirical distribution of these character sums converges weakly (after rescaling) to $\KM(2)$, also known as the \emph{arcsine law}, having density
\begin{equation}
    \One\{|x| \leq 2\} \frac{1}{\pi \sqrt{4 - x^2}}\, dx.
\end{equation}

In this formulation, this equidistribution result is also proved in \cite{Xi-2021-DistributionMultiplicativeKloostermanSum}, where the $S(\chi, \psi)$ arise in relation to a variant of the Kloosterman sums.
We believe our proof of this equidistribution result is cleaner and more conceptual: we need only establish sufficiently cancellations in necklace character sums rather than also identifying ``main terms'' in empirical moments of the $S(\chi, \psi)$, and the interpretation of these character sums as eigenvalues of a suitable matrix gives a free probability explanation for why the arcsine law appears as the limiting distribution (these are similar to the advantages of the approach of \cite{Kunisky-2023-MANOVAProductProjections} over that of \cite{MMP-2019-RandomSubensembles} for analyzing the spectrum of the random subgraphs $H_{p, \beta}$).

The arcsine law has appeared at least once before in this context as the limiting law of twisted Kloosterman sums, but only when summing over a finite field $\FF_{p^t}$ with $t \geq 2$ \cite{Kelmer-2010-DistributionTwistedKloostermanPrimePower}.
In contrast, the same sums over $\FF_p$ have the semicircle law as a limiting empirical distribution \cite{Katz-1988-GaussKloostermanMonodromy}; generally, the semicircle law, known in number theory as the \emph{Sato-Tate law}, is believed to be the limiting empirical distribution of many character sums and related quantities \cite[Chapter~21]{IK-2021-AnalyticNumberTheory}.
Our argument relating an equidistribution result for character sums with free probability suggests some interesting questions for further investigation:
\begin{enumerate}
    \item Do Kesten-McKay laws with other parameters appear as the limiting empirical distributions of natural families of character sums?
    \item Do other equidistribution results of analytic number theory have interpretations in terms of free probability, and in particular in terms of asymptotic freeness of deterministic but pseudorandom matrices, as we find in this case?
\end{enumerate}

\section{Proof of Theorem~\ref{thm:clique-bound}}

To convert control of the minimum eigenvalue into control of the independence number, we will use the following general spectral bound on the independence number, an adaptation to arbitrary graphs of the well-known ``Hoffman bound'' for regular graphs.
\begin{proposition}
    \label{prop:haemers}
    For any graph $G$,
    \begin{equation}
        \alpha(G) \leq |V(G)| \left(\frac{\mindeg(G)^2}{-\lambda_{\min}(\bA_{G})\maxdeg(G)} + 1\right)^{-1}.
    \end{equation}
\end{proposition}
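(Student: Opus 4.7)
The plan is to adapt the classical Hoffman ratio bound for regular graphs to the irregular setting via a degree-normalization trick. Write $\bD \colonequals \diag(d_v)_{v \in V(G)}$ for the diagonal degree matrix and set $\wtil{\bA} \colonequals \bD^{-1/2}\bA_G\bD^{-1/2}$. The point of passing to $\wtil{\bA}$ is that $\bD^{1/2}\one$ is automatically its Perron eigenvector with eigenvalue $1$, supplying an explicit top direction along which to decompose even when $G$ is not regular.

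Fix a maximum independent set $S \subseteq V(G)$ with $|S| = \alpha(G)$, and consider the lifted indicator $\by \colonequals \bD^{1/2}\one_S$. Since $S$ is independent, $\one_S^\top \bA_G \one_S = 0$, and hence $\by^\top \wtil{\bA}\by = 0$. Decomposing $\by$ as a multiple of $\bD^{1/2}\one$ plus a component $\by_\perp$ orthogonal to it, and applying $\by_\perp^\top \wtil{\bA}\by_\perp \geq \mu_{\min}\|\by_\perp\|^2$ with $\mu_{\min} \colonequals \lambda_{\min}(\wtil{\bA}) < 0$, the identity $\by^\top \wtil{\bA}\by = 0$ rearranges into the ``volume'' inequality
\begin{equation*}
    \sum_{v \in S} d_v \;\leq\; (-\mu_{\min}) \sum_{v \notin S} d_v,
\end{equation*}
which is the $\wtil{\bA}$-analogue of the usual Hoffman inequality for regular graphs.

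To convert this into the claimed form, I would next apply the trivial degree estimates $\sum_{v \in S} d_v \geq \alpha(G)\cdot \mindeg(G)$ and $\sum_{v \notin S} d_v \leq (|V(G)| - \alpha(G))\cdot \maxdeg(G)$, together with the transfer inequality $\mu_{\min} \geq \lambda_{\min}(\bA_G)/\mindeg(G)$ obtained from the congruence
\begin{equation*}
    \wtil{\bA} - \lambda_{\min}(\bA_G)\,\bD^{-1} \;=\; \bD^{-1/2}\bigl(\bA_G - \lambda_{\min}(\bA_G)\,\bm I\bigr)\bD^{-1/2} \;\succeq\; 0
\end{equation*}
combined with $\bD^{-1} \preceq \mindeg(G)^{-1}\,\bm I$ (noting that multiplication by the negative scalar $\lambda_{\min}(\bA_G)$ reverses this latter ordering). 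Substituting all three estimates into the volume inequality yields a linear inequality in $\alpha(G)$ that rearranges to exactly the displayed bound.

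The argument is essentially routine once the normalization $\wtil{\bA} = \bD^{-1/2}\bA_G\bD^{-1/2}$ is chosen; the only subtlety is tracking the sign of $\lambda_{\min}(\bA_G)$ through the congruence step, which is what forces $\mindeg(G)$ to appear in the denominator rather than $\maxdeg(G)$. The degenerate case $\mindeg(G) = 0$ is vacuous (the right-hand side reduces to $|V(G)|$), and otherwise one has $\lambda_{\min}(\bA_G) < 0$ automatically, because $\bA_G$ has zero diagonal and is not itself zero.
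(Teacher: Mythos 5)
Your proof is correct, and it takes a genuinely different route from the paper's. The paper's proof is a two-line derivation: it invokes Haemers' Theorem~3.3, which states the inequality with $\maxdeg(G)$ replaced by $\lambda_{\max}(\bA_G)$, and then simply bounds $\lambda_{\max}(\bA_G) \leq \maxdeg(G)$ by the Gershgorin circle theorem. Haemers' theorem itself is classically proved via eigenvalue interlacing applied to the quotient of $\bA_G$ over the partition $\{S, V \setminus S\}$. You instead give a self-contained argument that bypasses interlacing entirely: you normalize to $\wtil{\bA} = \bD^{-1/2}\bA_G\bD^{-1/2}$, use that $\bD^{1/2}\one$ is an explicit eigenvector of eigenvalue $1$ to run the standard Hoffman decomposition on the lifted indicator $\bD^{1/2}\one_S$ (the cross term vanishes because the orthogonal complement of an eigenvector of a symmetric matrix is invariant), and then transfer $\lambda_{\min}(\wtil{\bA}) \geq \lambda_{\min}(\bA_G)/\mindeg(G)$ via the congruence $\wtil{\bA} - \lambda_{\min}(\bA_G)\bD^{-1} \succeq 0$ and the Loewner estimate $\bD^{-1} \preceq \mindeg(G)^{-1}\bm I$. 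I verified the final algebra: $\alpha\cdot\mindeg^2 \leq -\lambda_{\min}\cdot\maxdeg\cdot(|V| - \alpha)$ rearranges exactly to the stated bound. What the paper's route buys is brevity and the sharper intermediate form with $\lambda_{\max}$; what your route buys is a direct, elementary proof that avoids reliance on interlacing and, by passing through the degree-normalized matrix, makes the role of the degree regularity hypotheses more transparent. Your handling of the degenerate cases ($\mindeg = 0$, and the automatic negativity of $\lambda_{\min}$ once $G$ has an edge) is also correct and complete.
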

\noindent
The result follows by starting with Theorem 3.3 of \cite{Haemers-1995-InterlacingEigenvaluesGraphs}, which states the same inequality with $\maxdeg(G)$ replaced by $\lambda_{\max}(\bA_{G})$, and using that $\lambda_{\max}(\bA_{G}) \leq \maxdeg(G)$ by the Gershgorin circle theorem.

We need only collect bounds on the quantities other than $\lambda_{\min}(\bA_{G_{p, I}})$ appearing in Proposition~\ref{prop:haemers} (when it is applied to $G_{p, I}$), namely $|V(G_{p, I})|$, $\mindeg(G_{p, I})$, and $\maxdeg(G_{p, I})$.
Recall that Proposition~\ref{prop:loc-graph-vertices} controls $|V(G_{p, I})|$.

\begin{proposition}[Degrees in localizations]
    \label{prop:loc-graph-degrees}
    For any $I \in \sI_a(G_p)$,
    \begin{equation}
        \frac{p}{2^{a + 1}} - a\sqrt{p} - \frac{a + 1}{2} \leq \mindeg(G_{p, I}) \leq \maxdeg(G_{p, I}) \leq \frac{p}{2^{a + 1}} + a\sqrt{p} + \frac{a + 1}{2}
    \end{equation}
\end{proposition}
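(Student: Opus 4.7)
The plan is to mirror the character-sum argument used for Proposition~\ref{prop:loc-graph-vertices}, but now applied to counting neighbors of a fixed vertex. Fix any $x \in V(G_{p,I})$. A vertex $y$ is a neighbor of $x$ in $G_{p,I}$ exactly when $y \in V(G_{p,I}) \setminus \{x\}$ and $\chi(x - y) = 1$. For $y \notin I \cup \{x\}$ the indicator $\One\{y \in V(G_{p,I})\}$ is $\prod_{z \in I} \frac{1 - \chi(y - z)}{2}$, and $\One\{x \sim y\}$ is $\frac{1 + \chi(x - y)}{2}$. I would therefore introduce
\begin{equation*}
S \colonequals \sum_{y \in \FF_p} \prod_{z \in I}\frac{1 - \chi(y - z)}{2} \cdot \frac{1 + \chi(x - y)}{2}
\end{equation*}
and track the small number of boundary terms where this naive formula is wrong. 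Using $\chi(0) = 0$, that $I$ is independent, and that $x \not\sim z$ for each $z \in I$ (since $x \in V(G_{p,I})$), a short case check shows that $y = x$ contributes $\tfrac{1}{2}$ to $S$ while each $y \in I$ contributes $0$, giving $\deg_{G_{p,I}}(x) = S - \tfrac{1}{2}$.

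Next, I would expand
\begin{equation*}
S = \frac{1}{2^{a+1}}\sum_{A \subseteq I}(-1)^{|A|}\left[\sum_{y \in \FF_p}\prod_{z \in A}\chi(y - z) + \sum_{y \in \FF_p}\chi(x - y)\prod_{z \in A}\chi(y - z)\right].
\end{equation*}
The $A = \emptyset$ term produces the main contribution $p/2^{a + 1}$, using $\sum_{y} \chi(x - y) = 0$. For each nonempty $A \subseteq I$, the first inner sum is a Weil sum for the squarefree polynomial $\prod_{z \in A}(y - z)$ of degree $|A|$, bounded in magnitude by $(|A| - 1)\sqrt{p}$. Since $\chi(-1) = 1$ for $p \equiv 1 \Mod{4}$ and $x \notin I$, the second is a Weil sum for the squarefree polynomial $(y - x)\prod_{z \in A}(y - z)$ of degree $|A| + 1$, bounded by $|A|\sqrt{p}$ via Theorem~\ref{thm:weil}. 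Summing the binomially-weighted contributions over nonempty $A \subseteq I$ gives an error of order $\frac{a-1}{2}\sqrt{p} + O(\sqrt{p}/2^{a+1})$, which together with the $\tfrac{1}{2}$ correction from passing from $S$ to the degree is comfortably inside the stated bound $a\sqrt{p} + \frac{a + 1}{2}$.

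Since the resulting estimate depends only on $a$ and $p$ and is uniform in $x \in V(G_{p,I})$, both $\mindeg(G_{p,I})$ and $\maxdeg(G_{p,I})$ lie in the claimed interval. The only substantive things to watch are the bookkeeping of the boundary terms and the check that every polynomial passed into Weil's bound has simple roots; the latter is automatic because $I \cup \{x\}$ consists of distinct field elements. I expect no genuine obstacle here—this is essentially a routine refinement of the vertex-counting argument already carried out for Proposition~\ref{prop:loc-graph-vertices}.
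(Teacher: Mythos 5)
Your proof is correct and self-contained, but it takes a more computational route than the paper. The paper's argument is a one-liner: since $x \in V(G_{p,I})$ is non-adjacent to all of $I$, the set $I \cup \{x\}$ is again independent, and the paper then identifies $\deg_{G_{p,I}}(x)$ with $|V(G_{p,I\cup\{x\}})|$ and invokes Proposition~\ref{prop:loc-graph-vertices} for $I \cup \{x\}$. Note, however, that read literally against the definition of localization (vertices non-adjacent to everything in $I \cup \{x\}$), the set $V(G_{p, I\cup\{x\}})$ counts the \emph{non-}neighbors of $x$ inside $V(G_{p,I})$, so the exact relation is $\deg_{G_{p,I}}(x) = |V(G_{p,I})| - 1 - |V(G_{p,I\cup\{x\}})|$, and unwinding this via Proposition~\ref{prop:loc-graph-vertices} applied twice would give a weaker $(2a-1)\sqrt{p} + O(1)$ error for $a \geq 2$. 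Your direct expansion $\deg_{G_{p,I}}(x) = S - \tfrac{1}{2}$ with $S = \sum_{y} \frac{1+\chi(x-y)}{2}\prod_{z\in I}\frac{1-\chi(y-z)}{2}$, followed by a pair of Weil bounds for each nonempty $A \subseteq I$, sidesteps this and in fact yields the sharper error $\frac{a-1}{2}\sqrt{p} + O(\sqrt{p}/2^{a})$, well inside the stated $a\sqrt{p} + \frac{a+1}{2}$. The boundary-term bookkeeping ($y = x$ contributes $\tfrac{1}{2}$, each $y \in I$ contributes $0$ because $\chi(x - y) = -1$) and the squarefreeness checks (using $x \notin I$, the independence of $I$, and $\chi(-1) = 1$ for $p \equiv 1 \Mod{4}$) are all handled correctly.
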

\begin{proof}
    Suppose that $x \in V(G_{p, I})$.
    Then, we note that by definition $I \cup \{x\} \in \sI_{a + 1}(G_p)$, and $\deg_{G_{p, I}}(x) = |V(G_{p, I \cup \{x\}})|$.
    The result then follows by Proposition~\ref{prop:loc-graph-vertices} applied to $I \cup \{x\}$.
\end{proof}

\begin{proof}[Proof of Theorem~\ref{thm:clique-bound}]
    We have by Propositions~\ref{prop:loc} and \ref{prop:haemers} that
    \begin{align*}
      &\omega(G_p) = \alpha(G_p) \\
      &\leq a + \max_{I \in \sI_a(G_p)} |V(G_{p, I})|\left(\frac{\mindeg(G_{p, I})^2}{-\lambda_{\min}(\bA_{G_{p, I}})\maxdeg(G_{p, I})} + 1\right)^{-1}
        \intertext{where the inner expression is monotonically increasing in $|V(G_{p, I})|$ and $\maxdeg(G_{p, I})$, and monotonically decreasing in $\mindeg(G_{p, I})$ and $\lambda_{\min}(\bA_{G_{p, I}}) < 0$. Thus, substituting from Propositions~\ref{prop:loc-graph-vertices} and \ref{prop:loc-graph-degrees},}
      &\leq a + \left(\frac{p}{2^a} + (a - 1)\sqrt{p} + \frac{a}{2}\right)\left(\frac{(\frac{p}{2^{a + 1}} - a\sqrt{p} - \frac{a + 1}{2})^2}{\max_{I \in \sI_a(G_p)}\{-\lambda_{\min}(\bA_{G_{p, I}})\} (\frac{p}{2^{a + 1}} + a\sqrt{p} + \frac{a + 1}{2})} + 1\right)^{-1}
        \intertext{and substituting the assumption of Conjecture~\ref{conj:min-eval} and estimating for $a$ fixed and as $p \to \infty$,}
      &\leq a + \left(\frac{p}{2^a} + O(\sqrt{p})\right)\left(\frac{(\frac{p}{2^{a + 1}} - O(\sqrt{p}))^2}{(\frac{\sqrt{2^a - 1}}{2^a}\sqrt{p} + o(\sqrt{p}))(\frac{p}{2^{a + 1}} + O(\sqrt{p}))} + 1\right)^{-1} \\
      &= \frac{\sqrt{2^a - 1}}{2^{a - 1}}\sqrt{p} + o(\sqrt{p}), \numberthis
    \end{align*}
    completing the proof.
\end{proof}

\section{Challenges in controlling the minimum eigenvalue}
\label{sec:challenges-eval}

Let us discuss the additional difficulty of proving Conjecture~\ref{conj:min-eval} as compared to Conjecture~\ref{conj:weak-conv}.
Recall that we showed that necklace character sum estimates imply Conjecture~\ref{conj:weak-conv} through the sufficient condition for asymptotic freeness of projection matrices (our Proposition~\ref{prop:proj-asymp-free}) from the companion work \cite{Kunisky-2023-MANOVAProductProjections}.
The same work also gives the following sufficient condition for convergence of an extreme eigenvalue, under a similar but stronger trace estimate.

\begin{theorem}[{\cite[Theorem 1.12]{Kunisky-2023-MANOVAProductProjections}}]
    \label{thm:edge-km}
    Suppose that $\bP_1^{(N)}, \bP_2^{(N)} \in \RR^{N \times N}_{\sym}$ for $N = N(n)$ an increasing sequence are orthogonal projection matrices that satisfy the assumptions of Proposition~\ref{prop:proj-asymp-free} with $\alpha = \frac{1}{2}$ and $\beta \in (0, 1)$.
    Suppose moreover that there is a sequence $k = k(n)$ such that $k \gg \log N$ and for which
    \begin{equation}
        \label{eq:max-eval-cond}
        \max_{1 \leq k^{\prime} \leq k} \left(\frac{2}{\sqrt{\beta(1 - \beta)}}\right)^{k^{\prime}}\left| \Tr\big((\bP^{(N)}_1 - \alpha \bm I_N)(\bP^{(N)}_2 - \beta \bm I_N)\big)^{k^{\prime}} \right| \leq \exp(o(k)).
    \end{equation}
    Then,
    \begin{equation}
        \label{eq:max-eval-lim}
        \lim_{n \to \infty}\lambda_{\max}(\bP_2^{(N)}\bP_1^{(N)}\bP_2^{(N)}) = \frac{1}{2} + \sqrt{\beta(1 - \beta)}.
    \end{equation}
\end{theorem}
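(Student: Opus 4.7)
\textbf{Proof proposal for Theorem~\ref{thm:edge-km}.} The lower bound $\liminf_n \lambda_{\max}(\bP_2^{(N)}\bP_1^{(N)}\bP_2^{(N)}) \geq \frac{1}{2} + \sqrt{\beta(1-\beta)}$ is essentially free: the hypothesis \eqref{eq:max-eval-cond} is strictly stronger than the sufficient condition of Proposition~\ref{prop:proj-asymp-free}, so Proposition~\ref{prop:proj-weak-conv} already gives weak convergence of the e.s.d.\ of $\bP_2^{(N)}\bP_1^{(N)}\bP_2^{(N)}$ to $\MANOVA(\frac{1}{2}, \beta)$, whose support extends up to $r_+(\frac{1}{2}, \beta) = \frac{1}{2} + \sqrt{\beta(1-\beta)}$. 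All the work lies in the matching upper bound.

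For the upper bound I would use a high-moment estimate together with the key algebraic identities available when $\alpha = \frac{1}{2}$. Writing $\bQ_1 \colonequals \bP_1^{(N)} - \frac{1}{2}\bm I_N$ and $\bQ_2 \colonequals \bP_2^{(N)} - \beta \bm I_N$, the projection identities $(\bP_i^{(N)})^2 = \bP_i^{(N)}$ give the clean relation $\bQ_1^2 = \frac{1}{4}\bm I_N$ (this is where $\alpha = \frac{1}{2}$ is essential) together with the quadratic relation $\bQ_2^2 = (1 - 2\beta)\bQ_2 + \beta(1-\beta)\bm I_N$. First, I would reduce the claim: setting $\wtil{\bM} \colonequals \bP_2^{(N)} \bQ_1 \bP_2^{(N)} = \bP_2^{(N)}\bP_1^{(N)}\bP_2^{(N)} - \frac{1}{2}\bP_2^{(N)}$, the matrix $\wtil{\bM}$ agrees with $\bP_2^{(N)}\bP_1^{(N)}\bP_2^{(N)} - \frac{1}{2}\bm I_N$ on $\mathrm{range}(\bP_2^{(N)})$ and vanishes on $\ker(\bP_2^{(N)})$, so the target is equivalent to $\limsup_n \lambda_{\max}(\wtil{\bM}) \leq \sqrt{\beta(1-\beta)}$. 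Using $(\bP_2^{(N)})^2 = \bP_2^{(N)}$, the moment method then reduces the problem to estimating
\begin{equation*}
    \lambda_{\max}(\wtil{\bM})^{2k} \;\leq\; \Tr(\wtil{\bM}^{2k}) \;=\; \Tr\big((\bQ_1 \bP_2^{(N)})^{2k}\big) \;=\; \Tr\big((\bQ_1\bQ_2 + \beta \bQ_1)^{2k}\big).
\end{equation*}

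Expanding the last expression produces $2^{2k}$ terms: each indexed by a subset $S \subseteq [2k]$, carrying weight $\beta^{2k - |S|}$, and corresponding to a cyclic product of $2k$ copies of $\bQ_1$ interleaved with a $\bQ_2$ at each position in $S$ and an $\bm I_N$ elsewhere. Iteratively applying $\bQ_1^2 = \frac{1}{4}\bm I_N$ collapses runs of consecutive $\bQ_1$'s (odd runs leave a residual $\bQ_1$ times a power of $\frac{1}{4}$, whereas even runs leave only a power of $\frac{1}{4}$ and cause the adjacent $\bQ_2$'s to merge into $\bQ_2^r$), and the quadratic identity for $\bQ_2$ then reduces each such $\bQ_2^r$ to a linear combination of $\bm I_N$ and $\bQ_2$. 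At the end of this process every one of the $2^{2k}$ traces is expressed as a linear combination of the trivial trace $\Tr(\bm I_N) = N$ and of the quantities $\wtil{T}_{k'} \colonequals \Tr((\bQ_1 \bQ_2)^{k'})$ for various $k' \leq k$.

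The main obstacle is the combinatorial bookkeeping needed to collect all these coefficients and establish the key bound
\begin{equation*}
    \Tr\big((\bQ_1 \bP_2^{(N)})^{2k}\big) \;\leq\; N \cdot \big(\beta(1-\beta)\big)^{k} \cdot \exp(o(k)).
\end{equation*}
The driving heuristic is that the ``pure-scalar'' terms — those that collapse entirely down to $\Tr(\bm I_N) = N$ — are naturally indexed by the non-crossing pair partitions of $[2k]$ and deliver exactly the $2k$-th moment of the (semicircle-like) edge density of $\MANOVA(\frac{1}{2}, \beta)$, namely $N \cdot C_k (\beta(1-\beta))^k$ up to polynomial factors. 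The remaining terms carry at least one nontrivial $\wtil{T}_{k'}$ factor, and the form of the hypothesis \eqref{eq:max-eval-cond} is precisely calibrated so that $|\wtil{T}_{k'}|$ times the natural combinatorial weight $(\sqrt{\beta(1-\beta)})^{k'}$ accrued to that factor is at most $\exp(o(k))$. Summing, extracting the $2k$-th root, and using $k \gg \log N$ to absorb the $N^{1/(2k)} \to 1$ prefactor yields $\limsup_n \lambda_{\max}(\wtil{\bM}) \leq \sqrt{\beta(1-\beta)}$ and hence \eqref{eq:max-eval-lim}. I expect the Catalan-type enumeration of the ``main'' terms and the matching cancellation of the ``error'' terms to be the most delicate step, as it amounts to a quantitative non-asymptotic version of the free moment-cumulant formula for a pair of free projections.
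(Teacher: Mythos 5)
This theorem is imported from the companion work \cite{Kunisky-2023-MANOVAProductProjections} (Theorem~1.12 there); the present paper cites it without proof, so there is no in-paper argument against which to compare your sketch. That said, the strategy you outline --- a lower bound from weak convergence to $\MANOVA(\frac{1}{2},\beta)$ via Proposition~\ref{prop:proj-weak-conv}, and an upper bound via a trace-power estimate on $\wtil{\bM} = \bP_2^{(N)}\bQ_1\bP_2^{(N)}$, using the identities $\bQ_1^2 = \frac{1}{4}\bm I_N$ and $\bQ_2^2 = (1-2\beta)\bQ_2 + \beta(1-\beta)\bm I_N$ to reduce each trace term to a linear combination of $N$ and $\Tr\big((\bQ_1\bQ_2)^{k'}\big)$ --- is natural given the form of hypothesis \eqref{eq:max-eval-cond}, and is very plausibly aligned with the companion paper's actual argument. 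Your reduction to $\Tr\big((\bQ_1\bP_2^{(N)})^{2k}\big)$ is also correct.

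However, your heuristic for the ``pure-scalar'' contribution is wrong by an exponential factor, and this factor is exactly what determines the final constant. You claim those terms sum to about $N\,C_k(\beta(1-\beta))^k$ up to polynomial corrections; since $C_k \sim 4^k/k^{3/2}$, taking $2k$-th roots of that quantity would produce $2\sqrt{\beta(1-\beta)}$, twice the target edge. A simple sanity check gives the correct order of magnitude: the restriction of $\wtil{\bM}$ to $\mathrm{range}(\bP_2^{(N)})$ has limiting spectral measure supported in $[-\sqrt{\beta(1-\beta)}, \sqrt{\beta(1-\beta)}]$, so for fixed $k$ the quantity $\frac{1}{N}\Tr(\wtil{\bM}^{2k})$ converges to the $2k$-th moment of a measure supported there, which is at most $(\beta(1-\beta))^k$ with no Catalan growth. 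The non-crossing-pairing count you anticipate from the ``main'' terms must therefore be exactly cancelled by the weights $\beta^{2k-|S|}$ and the powers of $\frac{1}{4}$ accumulated from collapsing runs of $\bQ_1$'s; exhibiting that compensation is the real content of the combinatorial bookkeeping you correctly flag as the hard step. As written, the sketch both leaves that bookkeeping unresolved and mispredicts its outcome, so the crucial step of the proof is incomplete.
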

\noindent
The right-hand side of \eqref{eq:max-eval-lim} is the right edge of the support of $\MANOVA(\frac{1}{2}, \beta)$, so it is not surprising that this value appears.

While it is not immediately obvious, let us show how, if it were possible to apply this Theorem in a similar setting to our use of Proposition~\ref{prop:proj-weak-conv}, then Conjecture~\ref{conj:min-eval} would follow.
We have
\begin{align*}
  \lambda_{\min}(\bA_{G_{p, I}})
  &= \lambda_{\min}(\bE_{V(G_{p, I})} \bA_{G_p} \bE_{V(G_{p, I})})
    \intertext{and, substituting a rearrangement of Proposition~\ref{prop:paley-spectral},}
  &= \lambda_{\min}\left(\bE_{V(G_{p, I})} \left(-\bP_p^{(-)} + \frac{\sqrt{p} - 1}{2\sqrt{p}}\bm I_{\FF_p} + \frac{\sqrt{p} - 1}{2}\what{\one}\what{\one}^{\top}\right) \bE_{V(G_{p, I})}\right)\\
  &\geq -\lambda_{\max}(\bE_{V(G_{p, I})} \bP_p^{(-)} \bE_{V(G_{p, I})}) + \frac{1}{2} - o(1)
    \intertext{which, if Theorem~\ref{thm:edge-km} applies to the pairs $(\bP_p^{(-)}, \bE_{V(G_{p, I})})$, in which case we must have $\beta = \frac{1}{2^a}$, would be}
  &\geq -\sqrt{\beta(1 - \beta)} - o(1)\\
  &= -\frac{\sqrt{2^a - 1}}{2^a} - o(1), \numberthis
\end{align*}
which would verify Conjecture~\ref{conj:min-eval}.

We note that all of our manipulations from Section~\ref{sec:pf:thm:level1} apply just as well to $\bP^{(-)}_p$ as to $\bP^{(+)}_p$, so this should not be a substantial change to our strategy.
However, it seems technically quite challenging to apply Theorem~\ref{thm:edge-km} in this setting.

We would need stronger bounds on the same traces treated before; following the argument in Section~\ref{sec:pf:thm:level1}, we have that, in general,
\begin{align*}
  T_k
  &\colonequals
  \left| \Tr\left(\left(\bP^{(-)}_p - \frac{1}{2} \bm I_{\FF_p}\right) \left(\bE_{V(G_{p, I})} - \frac{1}{2^a}\bm I_{\FF_p}\right)\right)^{k} \right| \\
  &\leq a + \frac{1}{2^{(a + 1)k}} \left|\sum_{\substack{Z_1, \dots, Z_k \subseteq I \\ Z_i \neq \emptyset}} \Sigma(Z_1, \dots, Z_k)\right| \label{eq:sum-of-sums} \numberthis
  \intertext{and, in Section~\ref{sec:pf:thm:level1}, we proceeded by using the triangle inequality and bounding}
  &\leq a + \frac{1}{2^kp^{k/2}} \max_{\substack{Z_1, \dots, Z_k \subseteq I \\ Z_i \neq \emptyset}} |\Sigma(Z_1, \dots, Z_k)|.\numberthis
\end{align*}
But note that, here, the cancellations posited by Conjecture~\ref{conj:char-sum} cannot help us anymore: they would show that the remaining maximum is $O(\poly(p))$, but, even so, in Theorem~\ref{thm:edge-km} this expression appears with an extra exponential factor of $(2 / \sqrt{\beta(1 - \beta)})^k$.
We would only find that
\begin{equation}
    \left(\frac{2}{\sqrt{\beta(1 - \beta)}}\right)^k T_k \leq \left(\frac{2}{\sqrt{\beta(1 - \beta)}}\right)^ka + \left(\frac{1}{\sqrt{\beta(1 - \beta)}}\right)^k\poly(p).
\end{equation}
Both of the terms here grow as $\exp(\Omega(k))$, failing to satisfy the condition of Theorem~\ref{thm:edge-km}.
(We must take $k \gg \log p$, so $\exp(\Omega(k)) \gg \poly(p)$.)

Thus, we would need a tighter bound on $T_k$.
First, we cannot afford the constant term $a$, which would require making the initial steps of bounding ``nuisance'' terms in $T_k$ more precise.
This aside, the more daunting challenge is in bounding the summation in \eqref{eq:sum-of-sums} before applying the triangle inequality.
Essentially, what this is asking is, beyond showing cancellations \emph{within} each necklace character sum, that we show cancellations \emph{among} these sums when many of them are added together.
Indeed, observing that $1 / \sqrt{\beta(1 - \beta)} = \Theta(2^{a/2})$, one may check that the bound we would need for this second term is
\begin{equation}
    \left|\sum_{\substack{Z_1, \dots, Z_k \subseteq I \\ Z_i \neq \emptyset}} \Sigma(Z_1, \dots, Z_k)\right| \leq 2^{ak/2} \exp(o(k)).
\end{equation}
There are $(2^a - 1)^k \approx 2^{ak}$ terms each of order $\poly(p) = \exp(o(k))$ in the summation, so this is asking for a ``soft'' version of square root cancellations in a sum of necklace character sums.

Similar results are found in number theory; indeed, this is in the spirit of Theorems~\ref{thm:katz-monomial-bound} and \ref{thm:twisted-kloosterman} on sums of Gauss sums and Kloosterman sums, respectively.
But, we expect that this would require a far deeper study of the statistics of necklace character sums than we undertake here.

\section{Numerical experiments}
\label{sec:numerical}

We present some numerical experiments on localizations of the Paley graph to support our main conjectures.

In support of Conjecture~\ref{conj:weak-conv}, we plot the empirical spectral distributions of examples of low-degree localizations of $G_p$ for a large $p$ together with the predicted Kesten-McKay limits in Figure~\ref{fig:example-loc}.

It is reasonable to not find this fully convincing, especially since, once $a \geq 3$, there are many non-isomorphic localizations of degree $a$.
Therefore, we also compute a distributional distance between the empirical spectral distribution and the predicted Kesten-McKay limit, over the unique (up to isomorphism) localization when $a \in \{1, 2\}$ and over \emph{all} localizations when $a = 3$ (of which there are $\Theta(p)$), and plot these distances in Figure~\ref{fig:dist-loc}.
The distance we use is the Kolmogorov distance, mostly for the sake of computational convenience; see \cite{GS-2002-ChoosingBoundingProbabilityMetrics} for a thorough discussion of such choices.

\begin{definition}
    \label{def:kolmogorov}
    The \emph{Kolmogorov distance} between two probability measures $\mu, \nu$ on $\RR$ is
    \begin{equation}
        K(\mu, \nu) \colonequals \sup_{x \in \RR}|\mu([-\infty, x]) - \nu([-\infty, x])|.
    \end{equation}
\end{definition}

\noindent
Conveniently, it is possible to write down a closed-form cumulative distribution function for the Kesten-McKay distribution, which we register here.

\begin{proposition}[c.d.f.\ of Kesten-McKay]
    Let $\mu \colonequals \KM(v)$ with $v \geq 2$.
    Then, for $x \in [-2\sqrt{v - 1}, 2\sqrt{v - 1}]$,
    \begin{align*}
      \mu([-\infty, x])
      &= \mu([-2\sqrt{v - 1}, x]) \\
      &= \frac{1}{2} + \frac{v}{2\pi}\sin^{-1}\left(\frac{x}{2\sqrt{v - 1}}\right) \\
      &\hspace{1cm} - \frac{v - 2}{4\pi}\tan^{-1}\left(\frac{vx + 4(v - 1)}{(v - 2)\sqrt{4(v - 1) - x^2}}\right) \\
      &\hspace{1cm} - \frac{v - 2}{4\pi}\tan^{-1}\left(\frac{vx - 4(v - 1)}{(v - 2)\sqrt{4(v - 1) - x^2}}\right), \numberthis
    \end{align*}
    where when $v = 2$ then the last two terms are to be interpreted as zero.
\end{proposition}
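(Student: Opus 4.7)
The plan is to prove the identity by the standard strategy for such explicit antiderivative formulas: verify the boundary value at the left endpoint $x = -2\sqrt{v-1}$, then show that differentiating the right-hand side in $x$ recovers the Kesten-McKay density. Note first that the hypothesis $v \geq 2$ means the atomic part $\tfrac{\max\{2-v,0\}}{2}(\delta_{-v} + \delta_v)$ of $\KM(v)$ vanishes, so the c.d.f.\ is simply $\int_{-2\sqrt{v-1}}^{x} \frac{v\sqrt{4(v-1)-t^2}}{2\pi(v^2 - t^2)} \, dt$ on the support, and it suffices to match the derivative and a single value.

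For the boundary value, I would evaluate the right-hand side at $x = -2\sqrt{v-1}$: the first term becomes $\tfrac12 + \tfrac{v}{2\pi} \cdot (-\tfrac{\pi}{2}) = \tfrac12 - \tfrac{v}{4}$. For each $\arctan$ term, the denominator $\sqrt{4(v-1)-x^2}$ vanishes, and one checks (using $(v-2)^2 \geq 0 \iff v \geq 2\sqrt{v-1}$) that both numerators $vx \pm 4(v-1)$ are non-positive, so each $\arctan$ equals $-\tfrac{\pi}{2}$. This yields the contribution $\tfrac{v-2}{8} + \tfrac{v-2}{8} = \tfrac{v-2}{4}$, which combines with $\tfrac12 - \tfrac{v}{4}$ to give $0$, as required. (The analogous check at $x = +2\sqrt{v-1}$ gives $1$.)

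For the derivative, write $u_{\pm}(x) = \frac{vx \pm 4(v-1)}{(v-2)\sqrt{4(v-1)-x^2}}$ and use $\frac{d}{dx}\arctan u_{\pm} = \frac{u_{\pm}'}{1 + u_{\pm}^2}$. The key algebraic observations, both of which follow by direct expansion, are the identities
\begin{align*}
(v-2)^2\bigl(4(v-1) - x^2\bigr) + (vx \pm 4(v-1))^2 &= 4(v-1)(v \pm x)^2, \\
v\bigl(4(v-1) - x^2\bigr) + x(vx \pm 4(v-1)) &= 4(v-1)(v \pm x).
\end{align*}
These produce the clean simplification
\begin{equation*}
\frac{u_{\pm}'(x)}{1 + u_{\pm}(x)^2} = \frac{v-2}{(v \pm x)\sqrt{4(v-1) - x^2}}.
\end{equation*}
Adding the two $\arctan$ contributions and combining over a common denominator yields $-\frac{v(v-2)^2}{2\pi(v^2 - x^2)\sqrt{4(v-1)-x^2}}$, which combines with the derivative $\frac{v}{2\pi\sqrt{4(v-1)-x^2}}$ of the $\arcsin$ term. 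Using the final identity $v^2 - x^2 - (v-2)^2 = 4(v-1) - x^2$, everything collapses to $\frac{v\sqrt{4(v-1)-x^2}}{2\pi(v^2 - x^2)}$, the density of $\KM(v)$.

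The only genuine subtlety, rather than an obstacle, is the degenerate case $v = 2$, where the prefactor $\tfrac{v-2}{4\pi}$ vanishes while the $\arctan$ arguments diverge: under the convention stated in the proposition this product is read as $0$, and indeed in that case the formula reduces to $\tfrac{1}{2} + \tfrac{1}{\pi}\sin^{-1}(x/2)$, the c.d.f.\ of the arcsine law on $[-2, 2]$, which matches the density $\tfrac{1}{\pi\sqrt{4-x^2}}$ of $\KM(2)$. Everything else is purely bookkeeping of signs and algebraic simplification.
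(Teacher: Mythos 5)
Your verification is correct, and the calculation goes through cleanly. For the record, the paper itself does not supply a proof of this proposition: it simply ``registers'' the closed-form c.d.f.\ and moves on to use it for computing Kolmogorov distances. So there is no paper argument to compare against; what you have written is the natural (and essentially the only direct) route — pin down one boundary value, differentiate the proposed antiderivative, and match the density. I re-derived your two key identities
\begin{align*}
(v-2)^2\bigl(4(v-1)-x^2\bigr) + \bigl(vx\pm 4(v-1)\bigr)^2 &= 4(v-1)(v\pm x)^2,\\
v\bigl(4(v-1)-x^2\bigr) + x\bigl(vx\pm 4(v-1)\bigr) &= 4(v-1)(v\pm x),
\end{align*}
and the resulting simplification of $u_\pm'/(1+u_\pm^2)$, the recombination over $\tfrac{1}{v+x}+\tfrac{1}{v-x}=\tfrac{2v}{v^2-x^2}$, and the final collapse via $v^2-(v-2)^2=4(v-1)$; all hold. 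Your endpoint check at $x=-2\sqrt{v-1}$ is also right, and hinges on the correct observation that $v-2\sqrt{v-1}=(\sqrt{v-1}-1)^2\geq 0$ so both $\arctan$ arguments tend to $-\infty$. One small point worth making explicit: for $v>2$ you should also note that the $\arctan$ terms are continuous on the open interval (the denominator $\sqrt{4(v-1)-x^2}$ stays positive there), so the fundamental theorem of calculus applies without further fuss; and the $v=2$ case is handled correctly by your direct reduction to the arcsine c.d.f. In short, the proposal is a complete and correct proof of a statement the paper leaves unproved.
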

\noindent
With this in hand, it is straightforward to compute the Kolmogorov distance between a Kesten-McKay law and a suitably rescaled e.s.d.\ of a given matrix.

Lastly, in support of Conjecture~\ref{conj:min-eval}, we plot the minimum eigenvalues of localizations of degree $a \in \{1, 2, 3\}$ in Figure~\ref{fig:min-eval} together with the prediction of the Conjecture, the edge of the support of the corresponding Kesten-McKay distribution.
As before, for $a \in \{1, 2\}$ there is only one number to plot, while for $a = 3$ we plot the minimum eigenvalues for all degree 3 localizations (more specifically, plotting the mean and the range over all localizations).

\section*{Acknowledgments}
\addcontentsline{toc}{section}{Acknowledgments}

Thanks to Afonso Bandeira, Emmanuel Kowalski, Dustin Mixon, Daniel Spielman, and Xifan Yu for many pieces of advice and stimulating discussions.
Thanks also to David Gamarnik and participants of the MIT Stochastics and Statistics seminar for several helpful suggestions about quasi-random graphs and connections with Ramsey theory.

\begin{figure}
    \begin{minipage}{0.47\textwidth}
        \includegraphics[scale=0.5]{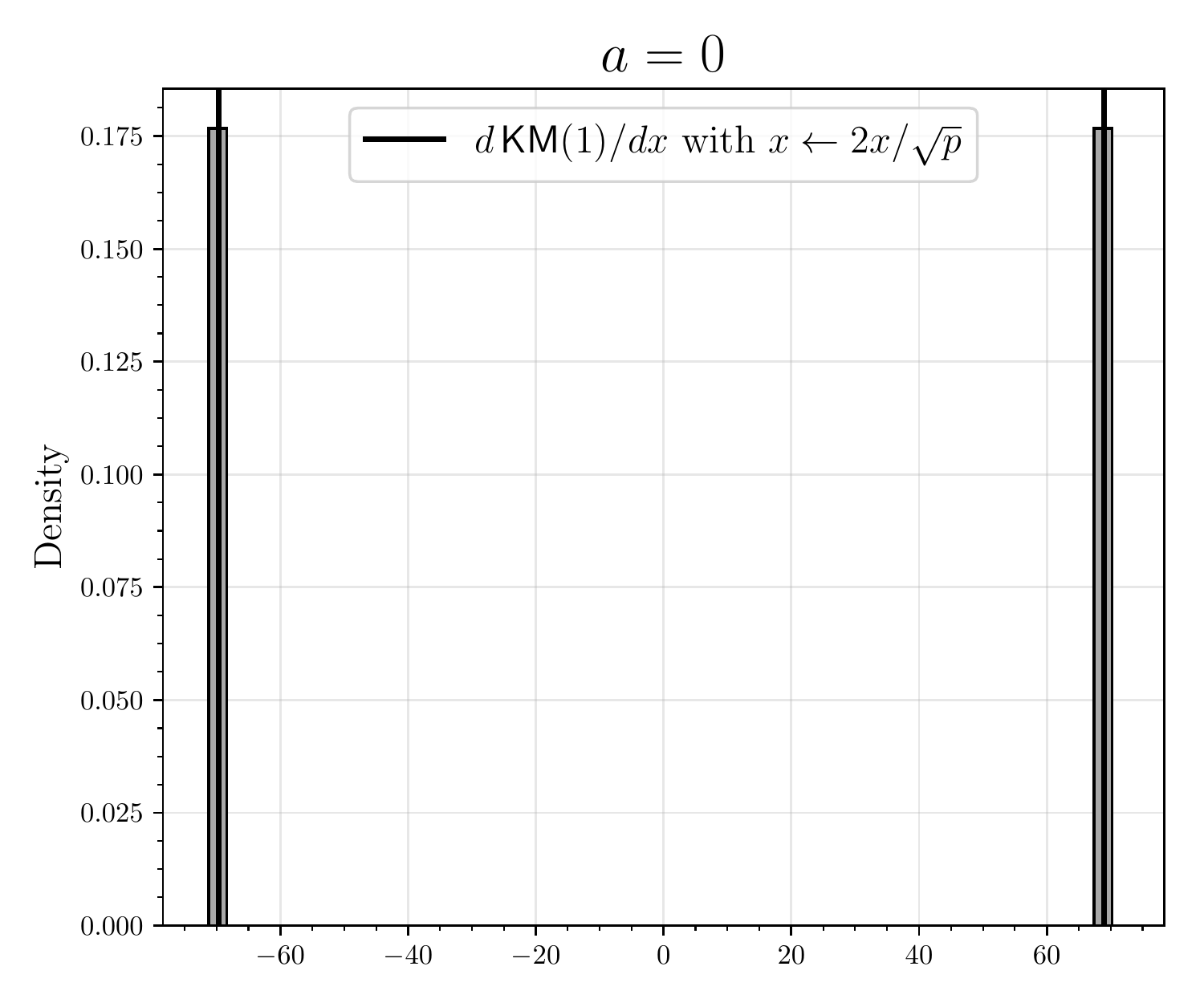}

        \includegraphics[scale=0.5]{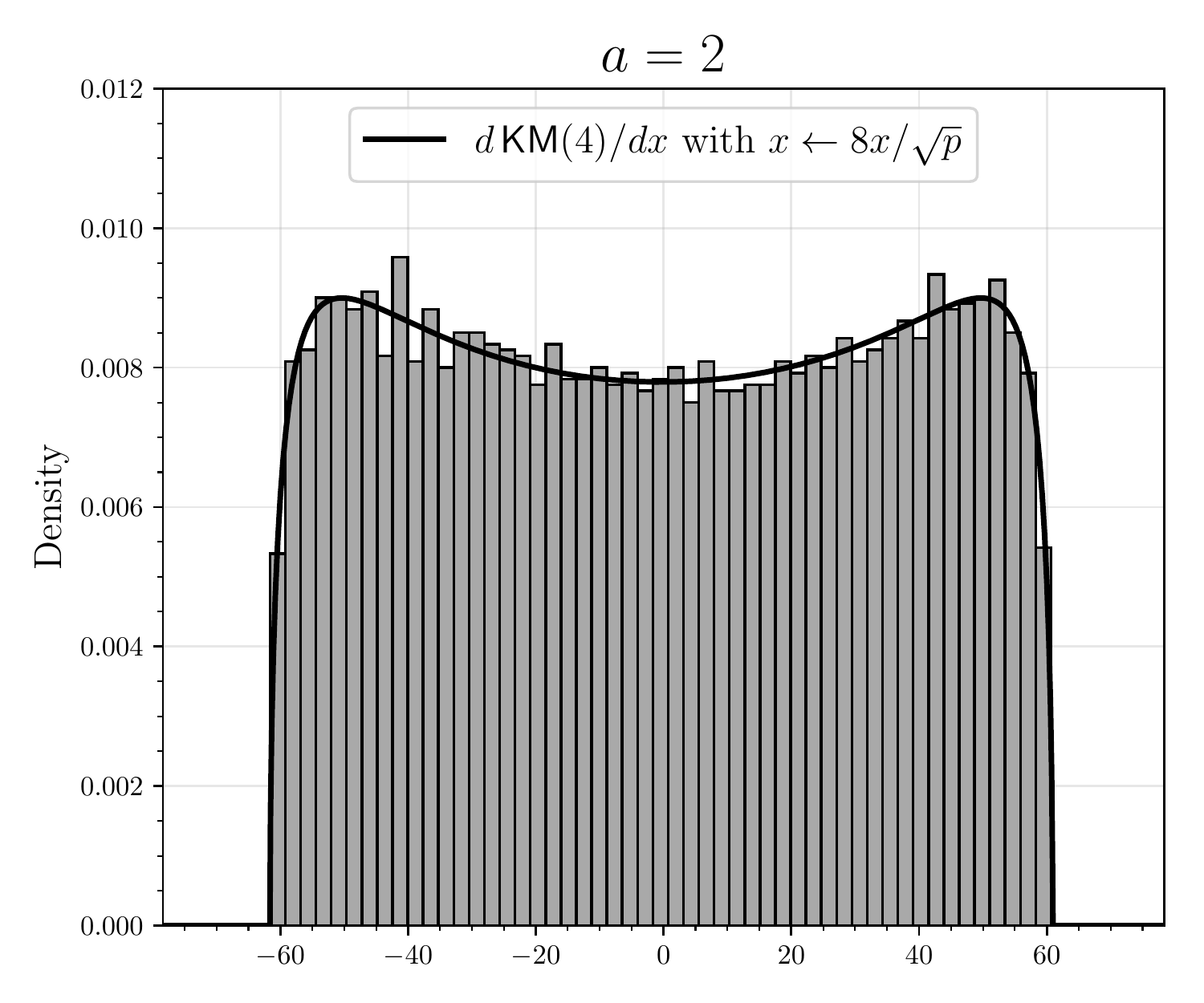}

        \includegraphics[scale=0.5]{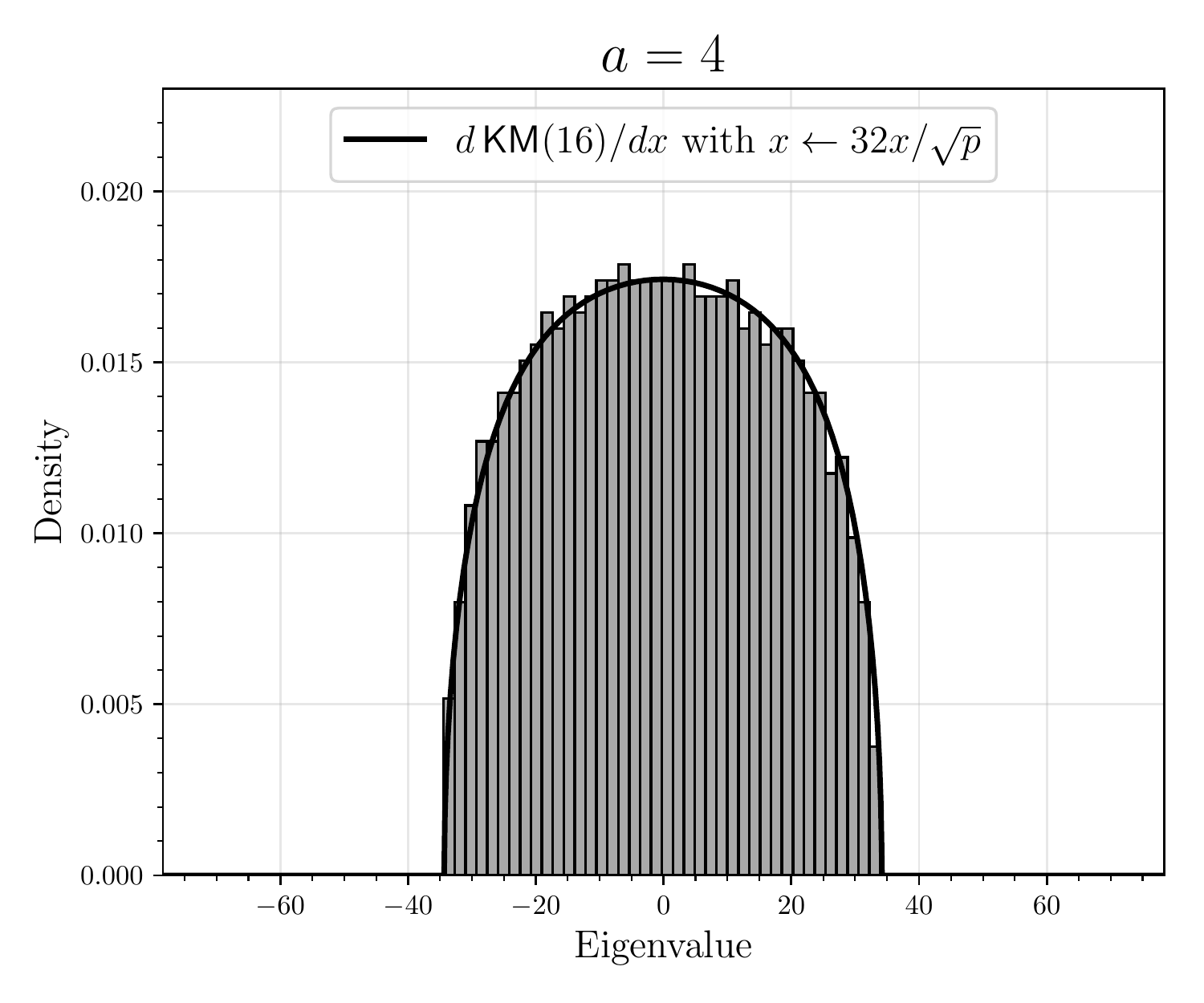}
    \end{minipage}
    \begin{minipage}{0.47\textwidth}
        \includegraphics[scale=0.5]{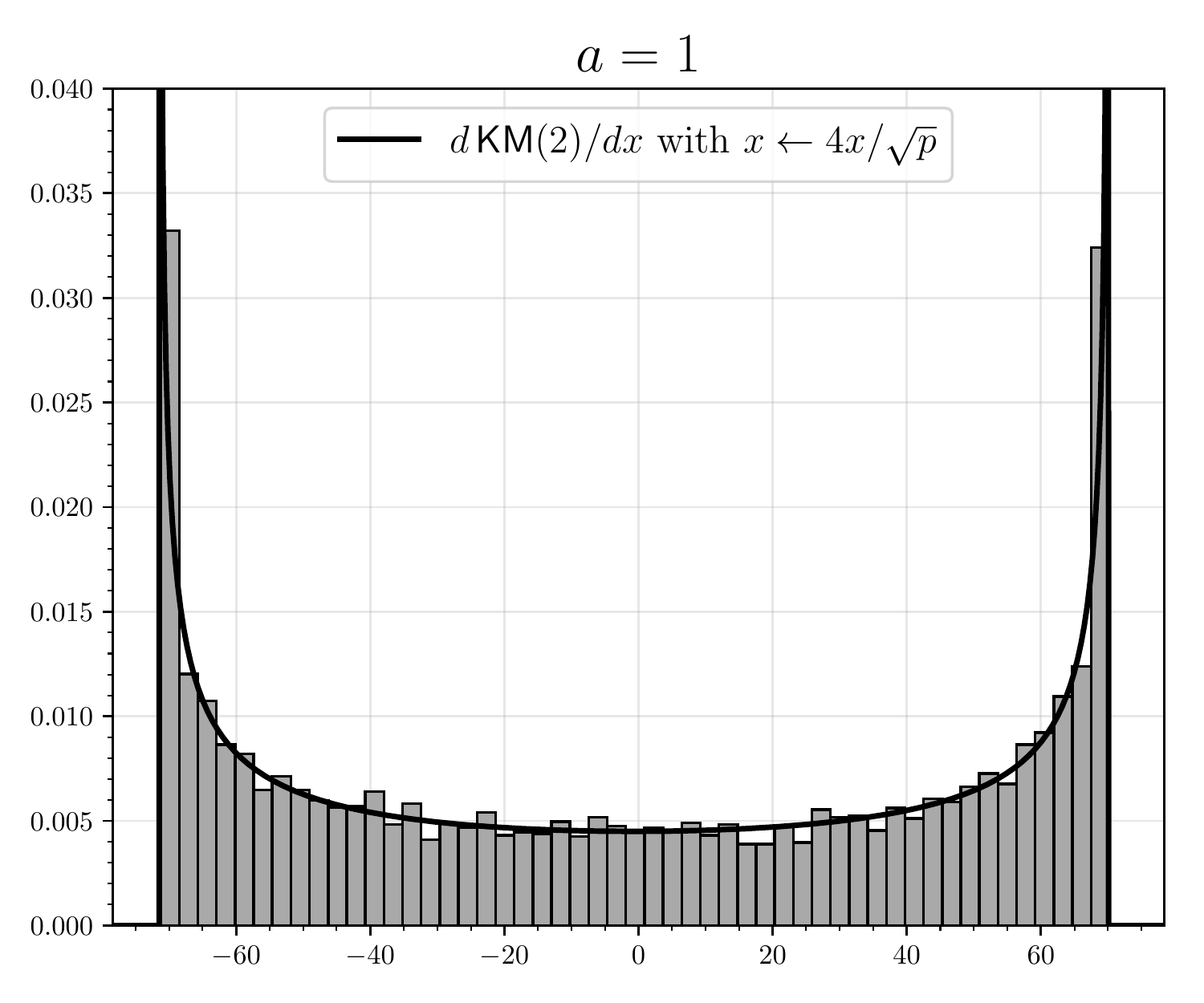}

        \includegraphics[scale=0.5]{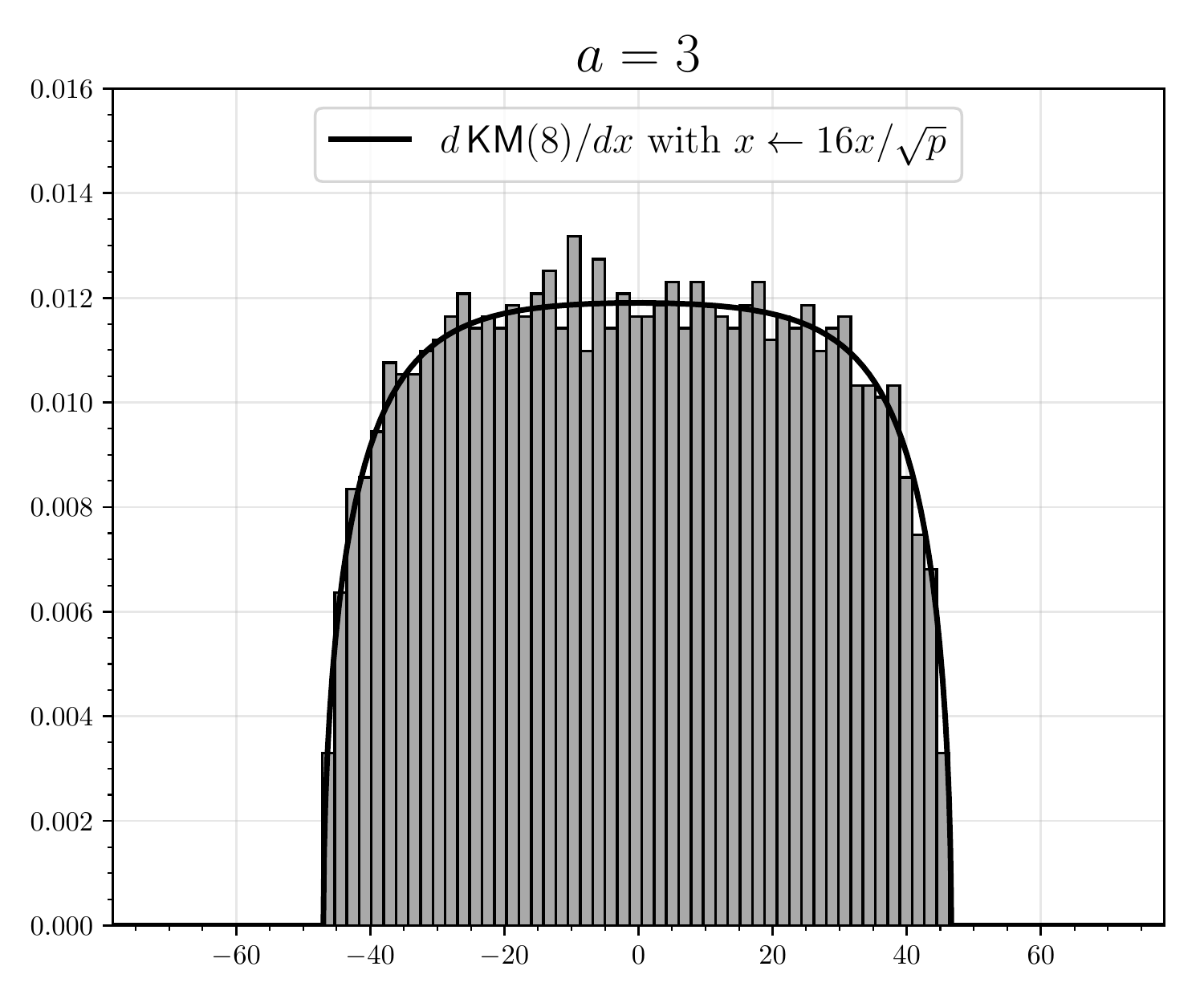}

        \includegraphics[scale=0.5]{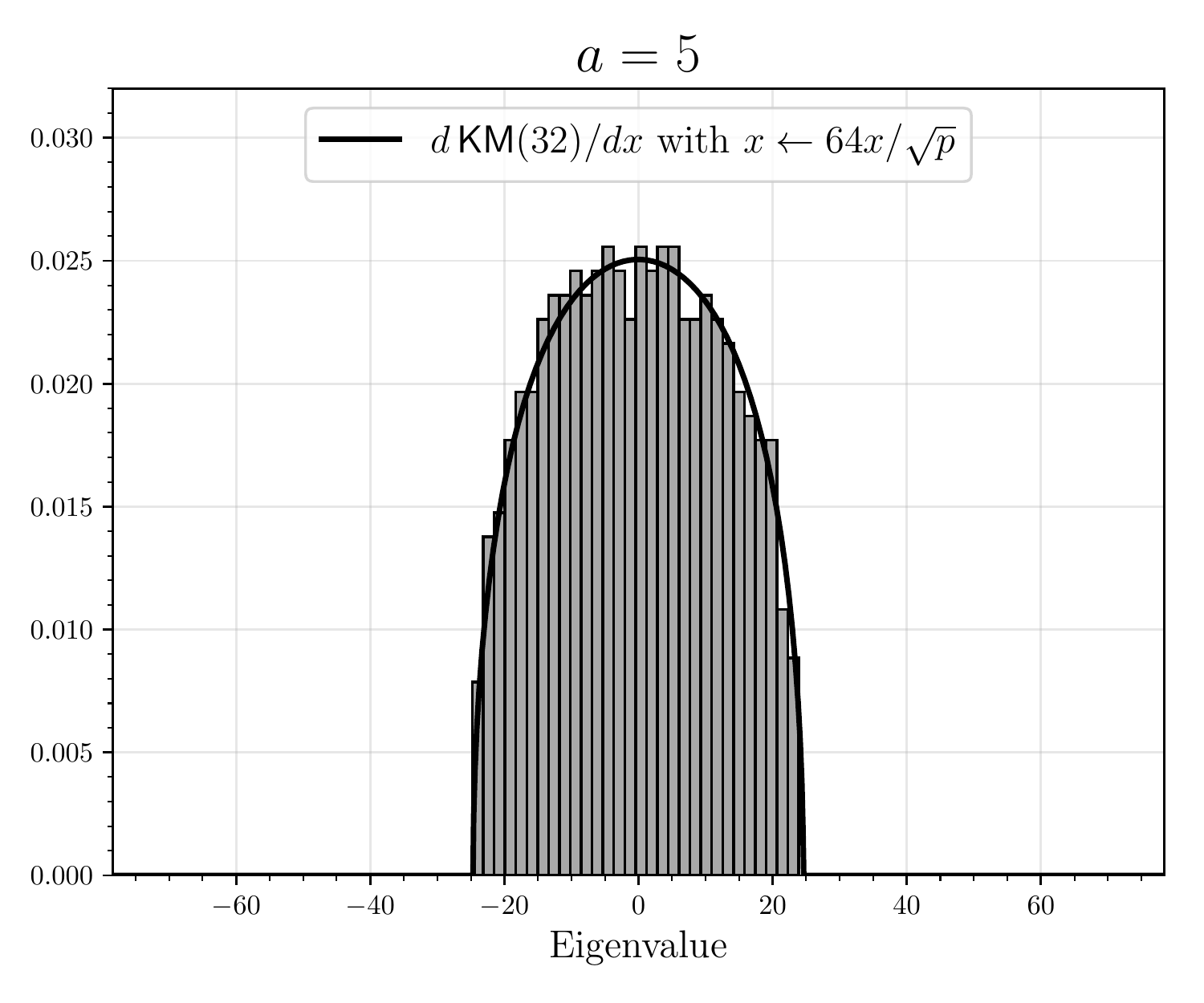}
    \end{minipage}

    \vspace{0.5em}

    \caption{\textbf{Spectra of localizations.} We plot the spectrum of $\bA_{G_{p, I}}$ (with the Perron-Frobenius largest eigenvalue omitted) along with the density of $\KM(2^a)$ suitably rescaled, for $p = \num[group-separator={,}]{20021}$ and one choice of $I$ for each size $a = |I| \in \{0, 1, 2, 3, 4, 5\}$.}
    \label{fig:example-loc}
\end{figure}

\clearpage

\begin{figure}
    \begin{center}
        \includegraphics[scale=0.6]{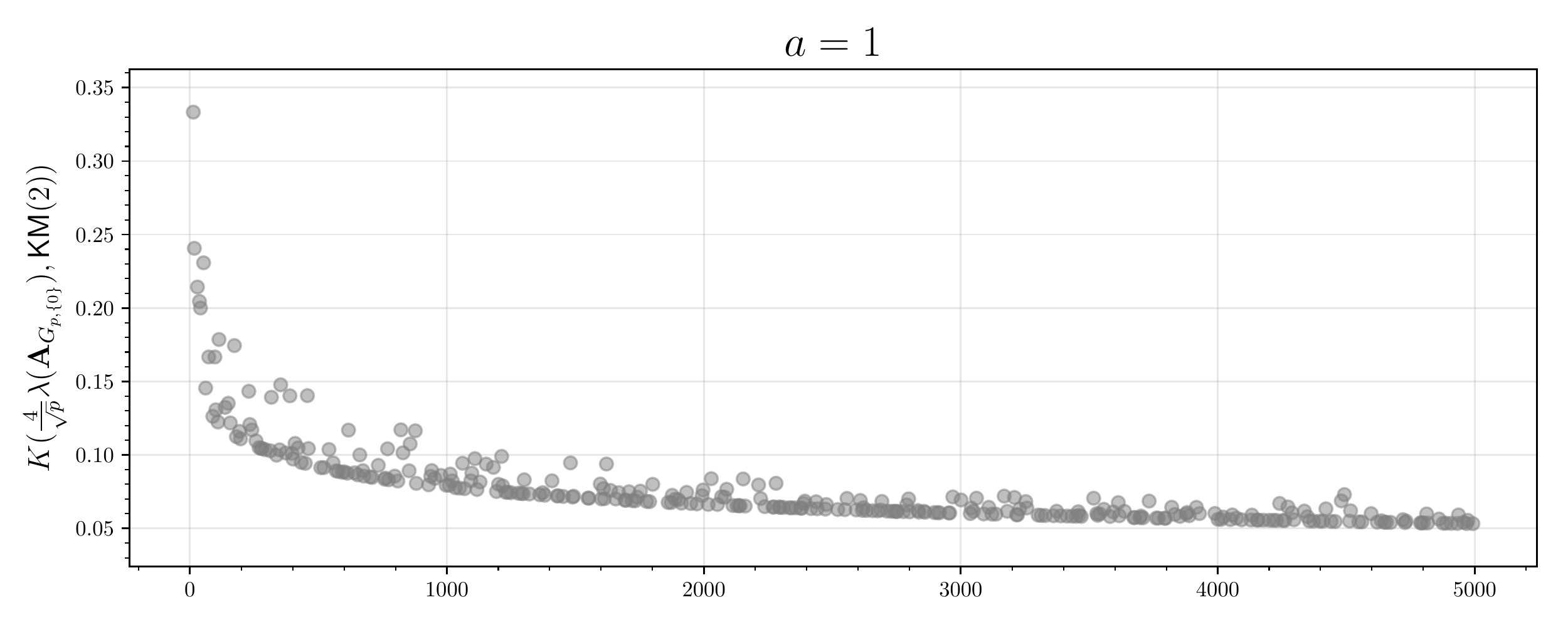}

        \includegraphics[scale=0.6]{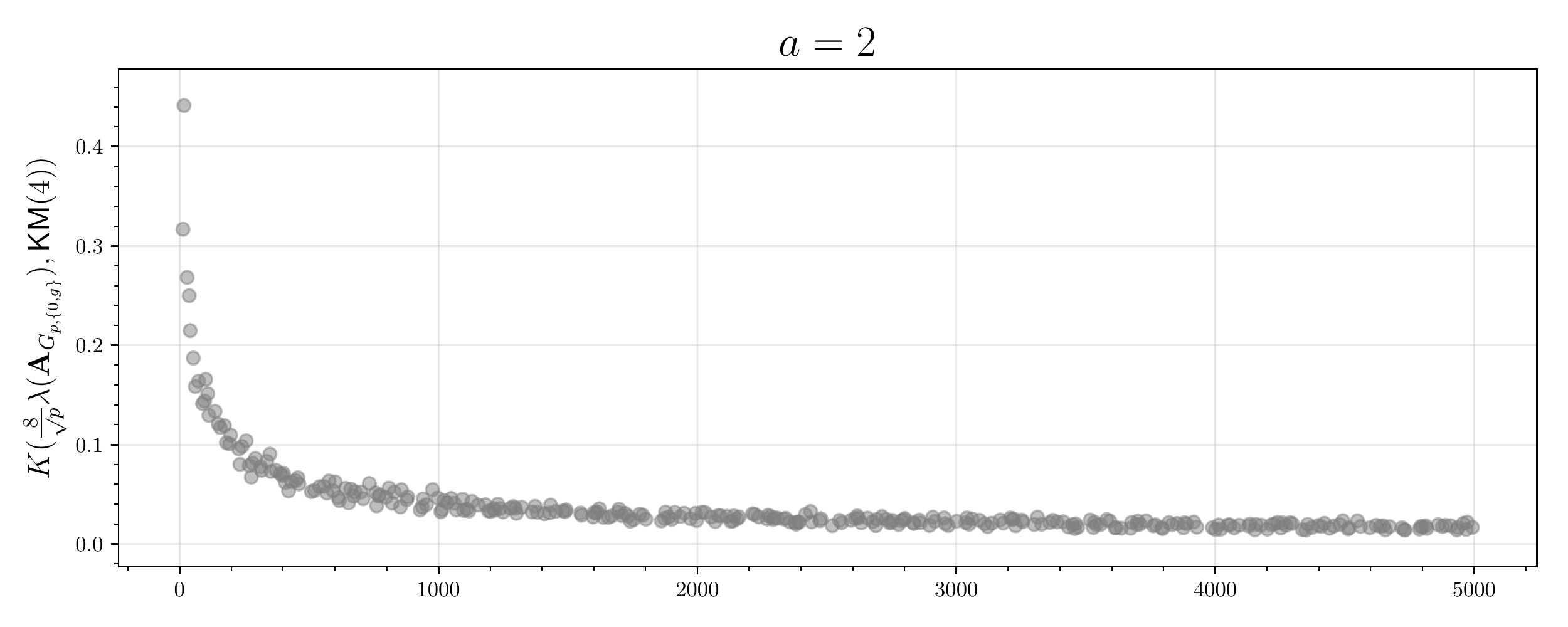}

        \includegraphics[scale=0.6]{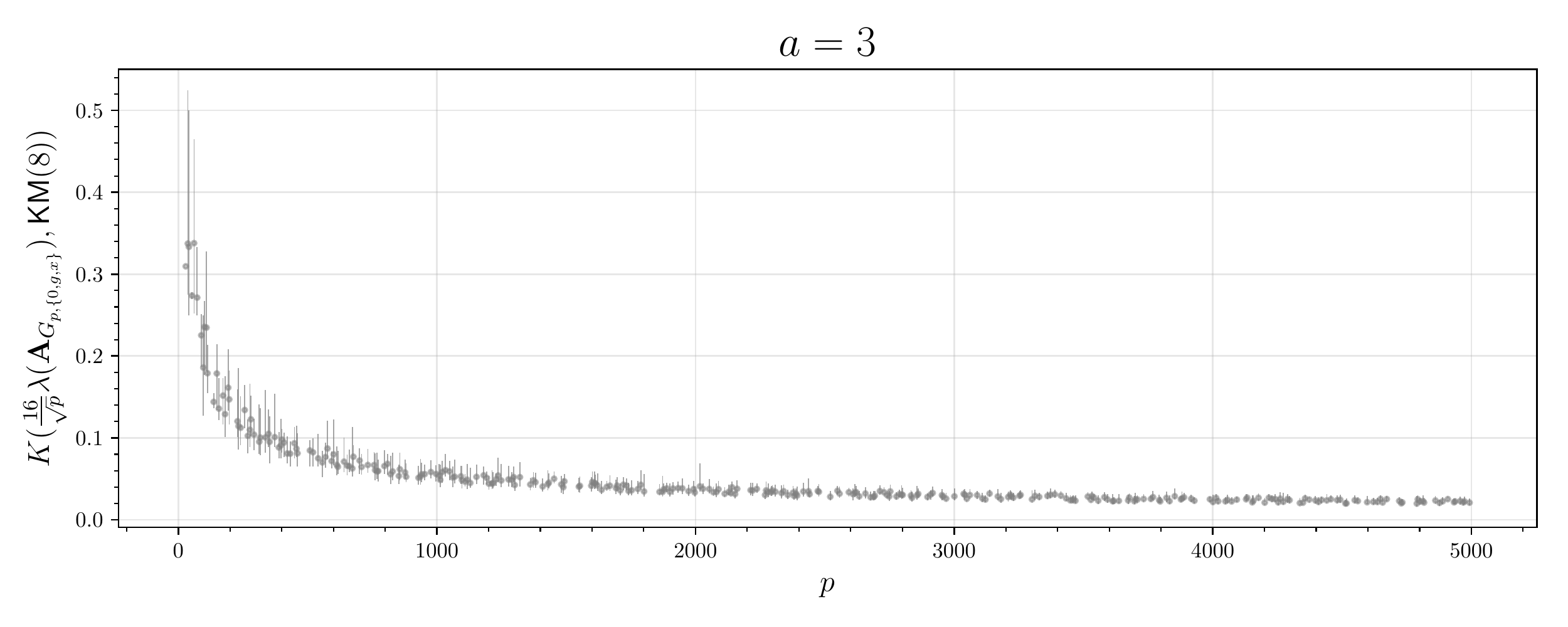}
    \end{center}

    \vspace{-1em}

    \caption{\textbf{Distributional distance of e.s.d.\ of localizations to Kesten-McKay.} We plot the Kolmogorov distance between the empirical spectral distribution of $\frac{2^{a + 1}}{\sqrt{p}} \bA_{G_{p, I}}$ and $\KM(2^a)$ for $a \in \{1, 2, 3\}$ and all $p \leq \num[group-separator={,}]{5000}$. For $a = 1$ (respectively $a = 2$) all $G_{p, I}$ are isomorphic to $G_{p, \{0\}}$ (respectively $G_{p, \{0, g\}}$ for $g$ a generator of $\FF_p^{\times}$), so we plot one value. For $a = 3$, all $G_{p, I}$ are isomorphic to $G_{p, \{0, g, x\}}$ for some $x$, so we plot the mean, minimum, and maximum of the Kolmogorov distance over all $x$ with $\{0, g, x\} \in \sI_3(G_p)$.}
    \label{fig:dist-loc}
\end{figure}

\clearpage

\begin{figure}
    \begin{center}
        \includegraphics[scale=0.6]{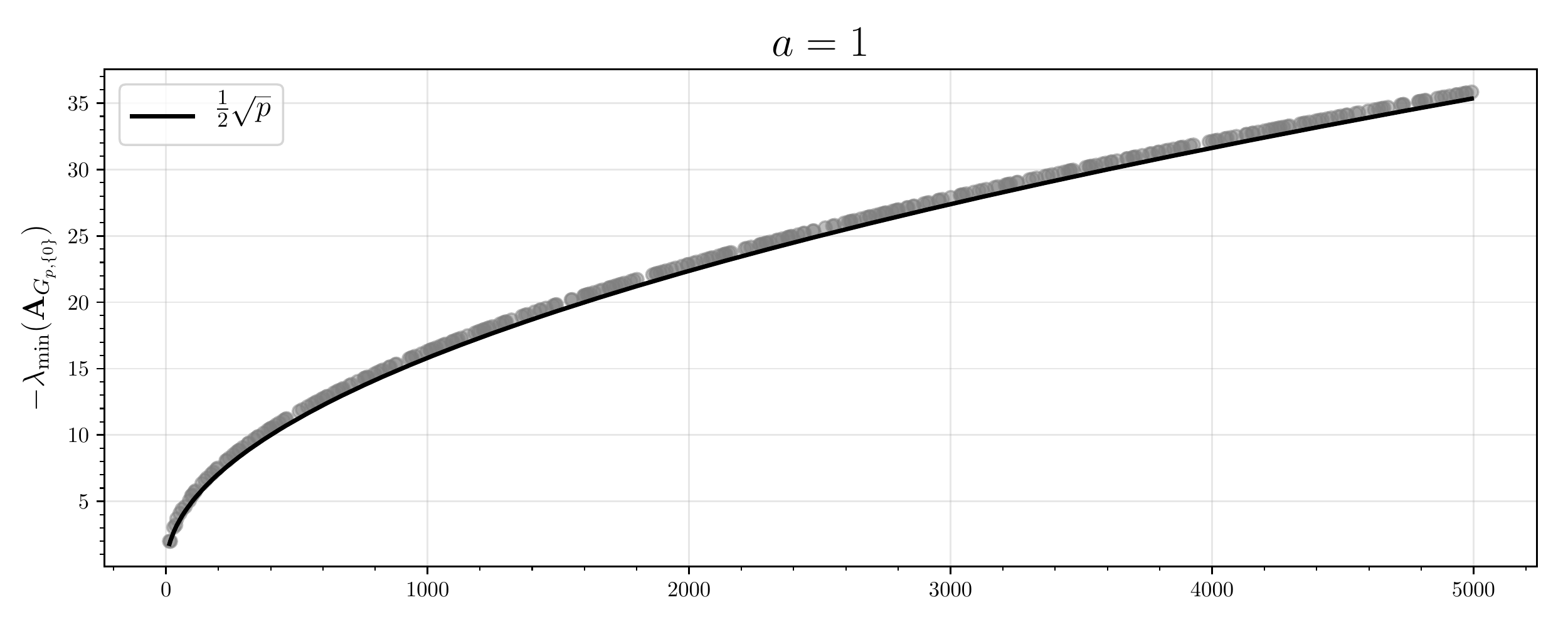}

        \includegraphics[scale=0.6]{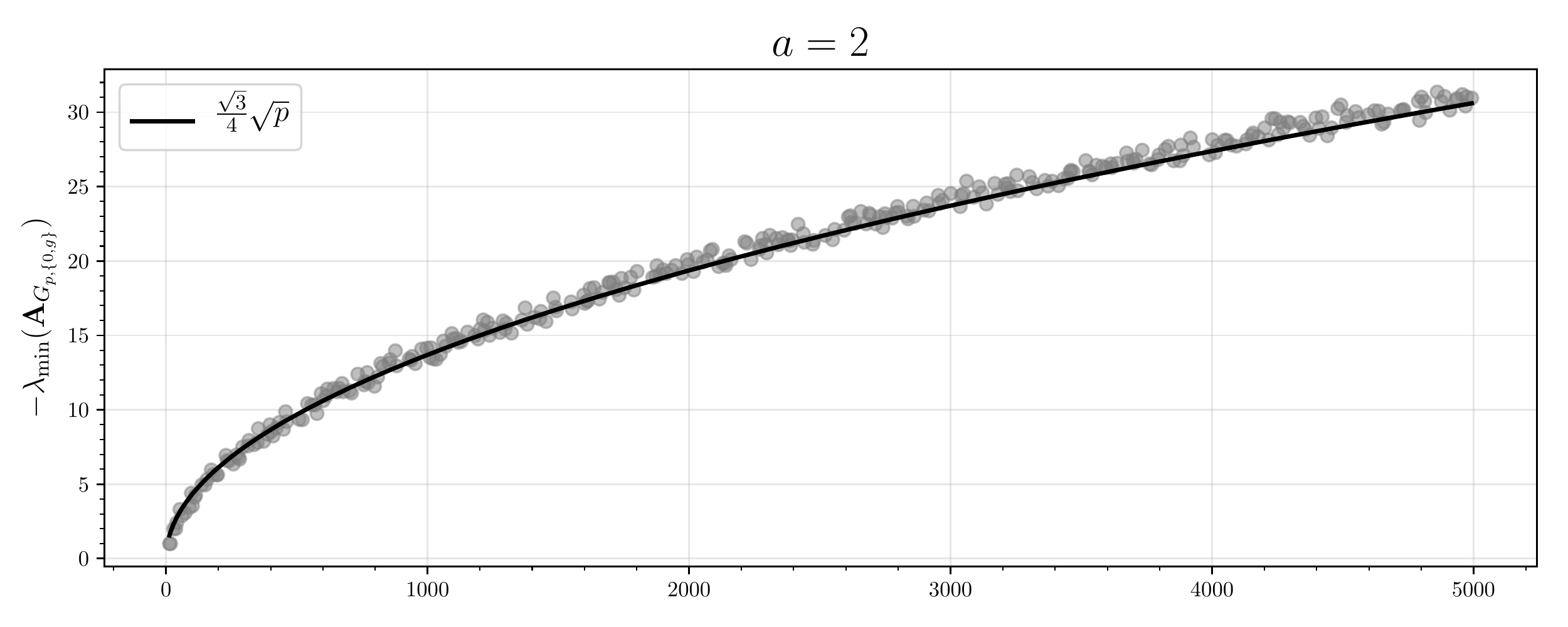}

        \includegraphics[scale=0.6]{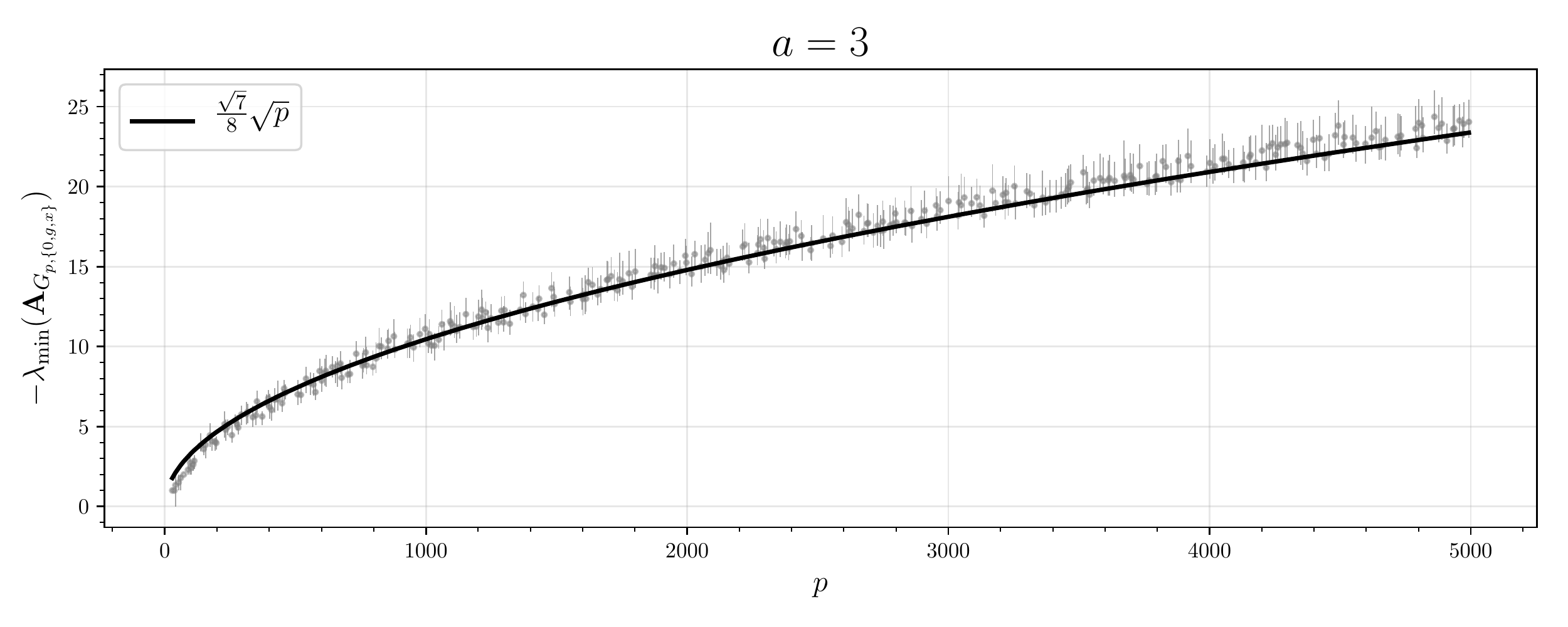}
    \end{center}

    \vspace{-1em}

    \caption{\textbf{Minimum eigenvalues of localizations.} We plot $-\lambda_{\min}(\bA_{G_{p, I}})$ over all $I \in \sI_a(G_p)$ for $p \leq \num[group-separator={,}]{5000}$ and $a \in \{1, 2, 3\}$, along with the corresponding prediction from Conjecture~\ref{conj:min-eval}. The remarks from Figure~\ref{fig:dist-loc} concerning the distinction between $a \in \{1, 2\}$ and $a = 3$ apply here as well.}
    \label{fig:min-eval}
\end{figure}

\clearpage

\addcontentsline{toc}{section}{References}
\bibliographystyle{alpha}
\bibliography{main}

\clearpage

\appendix

\section{\Lovasz\ $\vartheta$ function on degree 2 localization}
\label{app:theta-01}

\begin{figure}
    \begin{center}
        \includegraphics[scale=0.65]{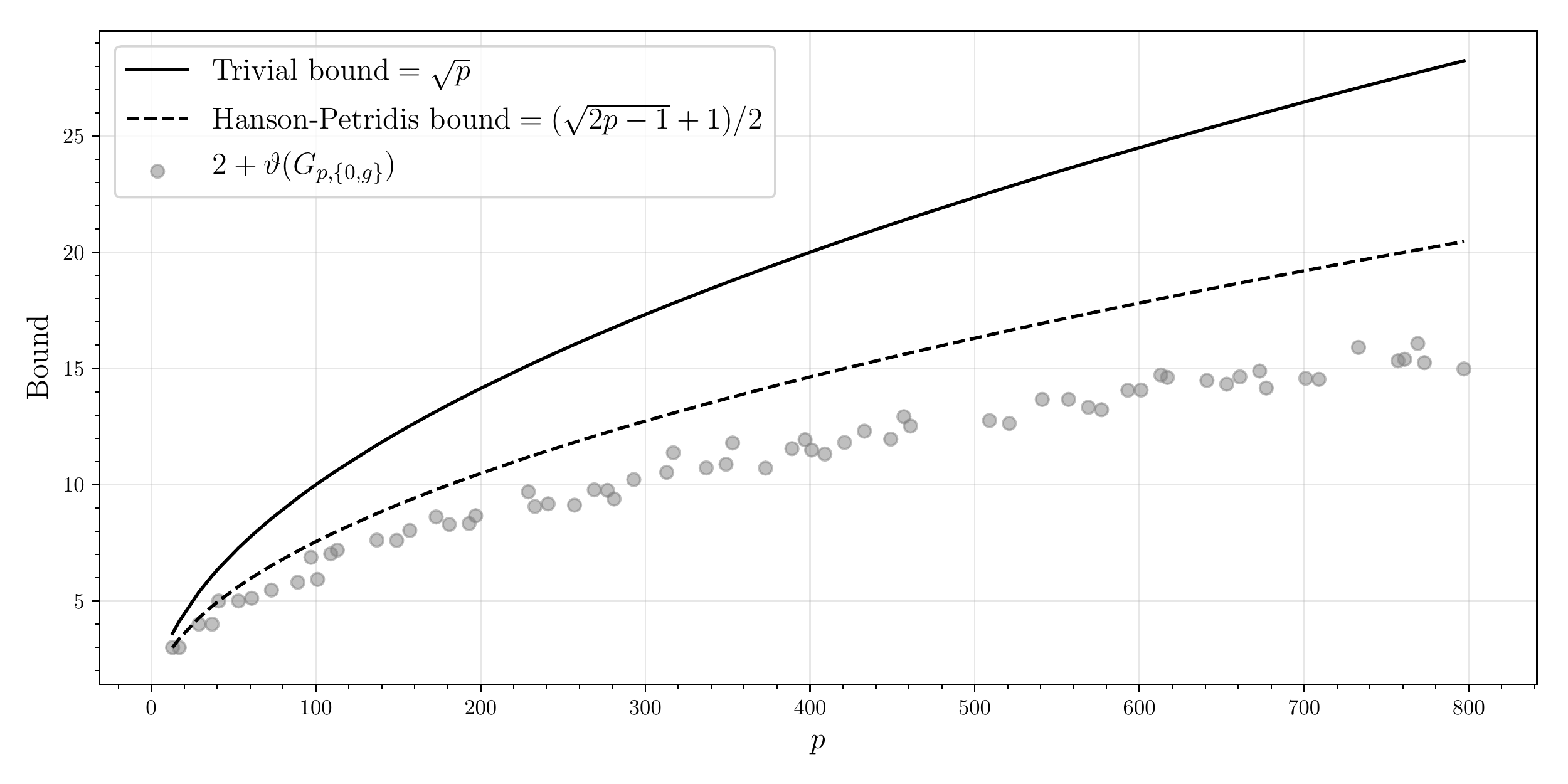}
    \end{center}
    \vspace{-1em}
    \caption{\textbf{Bound from \Lovasz\ $\vartheta$ function on 2-localization.} We plot the bound on $\alpha(G_p)$ from \eqref{eq:lovasz2-bound} together with the ``trivial'' $\sqrt{p}$ bound and the Hanson-Petridis bound of Theorem~\ref{thm:hp-bound} for all $p \leq 800$.}
    \label{fig:lovasz-2}
\end{figure}

We explore empirically a simple \emph{ad hoc} improvement of the proof strategy pursued by \cite{MMP-2019-LPCliquesPaley}.
Recall that their approach applied the \Lovasz\ $\vartheta$ function (equivalently, the degree~2 SOS relaxation) together with degree 1 localization, using the bound
\begin{equation}
    \alpha(G_p) \leq 1 + \max_{I \in \sI_1(G_p)} \vartheta(G_{p, I}) = 1 + \vartheta(G_{p, \{0\}}),
\end{equation}
where we have used that, by Corollary~\ref{cor:transitivity}, all of the $G_{p, \{x\}}$ are isomorphic.
They found empirically that the right-hand side is very close to the Hanson-Petridis bound of Theorem~\ref{thm:hp-bound}.
These empirical experiments were eased by the observation that $G_{p, \{0\}}$ has a circulant adjacency matrix (see the discussion in Section~\ref{sec:equidistribution-level1}), whereby $\vartheta(G_{p, \{0\}})$ may be reduced to a linear rather than semidefinite program.

Consider the same approach with degree 2 localization.
Let $g$ be a multiplicative generator of $\FF_p^{\times}$.
We have
\begin{equation}
    \label{eq:lovasz2-bound}
    \alpha(G_p) \leq 2 + \max_{I \in \sI_2(G_p)} \vartheta(G_{p, I}) = 2 + \vartheta(G_{p, \{0, g\}}),
\end{equation}
where again we have that, by Corollary~\ref{cor:transitivity}, all of the $G_{p, I}$ for $I \in \sI_2(G_p)$ are isomorphic.
Unfortunately, the adjacency matrix of $G_{p, \{0, g\}}$ is \emph{not} circulant, so there is not enough symmetry remaining in this problem to reduce $\vartheta(G_{p, \{0, g\}})$ to a linear program.
Still, unlike higher-degree localization bounds, this bound only calls for solving one semidefinite program for a given value of $p$, and so is relatively computationally tractable.
We compare this bound to the trivial and Hanson-Petridis bounds in Figure~\ref{fig:lovasz-2}, and find that it appears to improve further on the constant in front of $\sqrt{p}$.
We state this formally below.

\begin{conjecture}[Improving on Hanson-Petridis]
    \label{conj:lovasz2}
    For some $\varepsilon > 0$ and for all sufficiently large $p$, $\vartheta(G_{p, \{0, g\}}) \leq (\frac{1}{\sqrt{2}} - \varepsilon)\sqrt{p}$.
\end{conjecture}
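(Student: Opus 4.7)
The plan is to exhibit an explicit feasible solution to the dual SDP for $\vartheta(G_{p, \{0, g\}})$ whose largest eigenvalue is at most $(\frac{1}{\sqrt{2}} - \eps)\sqrt{p} + O(1)$ for some fixed $\eps > 0$. Recall the dual characterization $\vartheta(G) = \min_M \lambda_{\max}(M)$, where $M$ ranges over real symmetric matrices indexed by $V(G)$ with $M_{ii} = 1$ and $M_{ij} = 1$ on every non-edge $\{i, j\}$, while the entries on edges are free; optimizing over those edge entries is the core of the problem.

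First I would rule out the naive Hoffman-type witness as being sufficient. Taking $M^{(0)}_{ij} = -\chi(i - j)$ for $i \neq j$ and $M^{(0)}_{ii} = 1$, so that $M^{(0)}$ equals $-1$ on edges and $+1$ on non-edges, Proposition~\ref{prop:paley-spectral} yields $\lambda_{\max}(M^{(0)}) = 1 - \lambda_{\min}(\bV \bS_{G_p} \bV^{\top})$, where $\bV$ is the truncation onto $V \colonequals V(G_{p, \{0, g\}})$. Assuming Conjecture~\ref{conj:min-eval} at $a = 2$, this gives only $\vartheta(G_{p, \{0, g\}}) \leq \frac{\sqrt{3}}{2}\sqrt{p} + o(\sqrt{p})$, which is \emph{weaker} than Hanson-Petridis. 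Hence any successful proof must genuinely exploit the freedom in the off-diagonal edge entries, not just the bottom eigenvalue.

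Second, I would parametrize a richer family $M_\varphi$ of dual witnesses by modulating edge weights through a function $\varphi : \FF_p \to \RR$, for instance setting $M_\varphi(x, y) = -\chi(x - y)\varphi(x)\varphi(y)$ on edges (with $+1$ on non-edges and diagonal), constrained to respect the involution $\sigma(x) = g - x$ that preserves $\{0, g\}$ and therefore $V$. Writing $\bV \bS_{G_p} \bV^{\top}$ via Proposition~\ref{prop:paley-spectral} and expanding the quadratic form defining $\lambda_{\max}(M_\varphi)$ using the character-sum machinery of Section~\ref{sec:pf:thm:level1}, combined with Weil's bound (Theorem~\ref{thm:weil}) and the Katz monomial estimate (Theorem~\ref{thm:katz-monomial-bound}) to handle the cross-terms, one should be able to reduce the asymptotics of $\lambda_{\max}(M_\varphi) / \sqrt{p}$ to an edge-of-spectrum quantity for a rank-$O(1)$ perturbation of the pair of projections $(\bP_p, \bE_V)$ appearing in Conjecture~\ref{conj:asymp-free}. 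Optimizing over $\varphi$ within this limiting picture then becomes a finite-dimensional convex problem, which numerical evidence suggests has optimum strictly below $1/\sqrt{2}$.

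The main obstacle is twofold. On the technical side, the reduction above requires an edge-of-spectrum statement of the flavor of Theorem~\ref{thm:edge-km}, but applied to a rank-$O(1)$ deformation of $(\bP_p, \bE_V)$; as already discussed in Section~\ref{sec:challenges-eval}, edge-level control is substantially harder than the bulk convergence of Conjecture~\ref{conj:weak-conv}, and the perturbation only adds to the difficulty. On the conceptual side, the constant $1/\sqrt{2}$ coincides exactly with the Hanson-Petridis constant, whose proof uses the polynomial method rather than any spectral argument; there is accordingly no \emph{a priori} reason a spectral witness of this shape should beat it, so the strict improvement $\eps > 0$ must ultimately be extracted from a sharp quantitative computation in the free-probabilistic limit rather than any soft comparison, and this is where I expect the crux of a proof to lie.
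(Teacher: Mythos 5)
This statement is labeled a \emph{Conjecture} in the paper, not a theorem, and the paper itself states explicitly that it has not been proved: the authors write that ``the natural approach to Conjecture~\ref{conj:lovasz2} is to build a feasible point for [the] SDP dual to $\vartheta(G_{p, \{0, g\}})$'' and that they ``have not succeeded in finding a construction that is tractable to analyze theoretically,'' attributing the difficulty to the reduced symmetry of $G_{p, \{0, g\}}$ (its adjacency matrix is not circulant, unlike $G_{p, \{0\}}$) and the lack of any simple structure in the numerically computed dual optimizer. So there is no proof in the paper to compare against.

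Your write-up is appropriately a \emph{plan} rather than a proof, and it is aligned with the route the paper identifies as the natural one: work with the dual characterization $\vartheta(G) = \min_M \lambda_{\max}(M)$ and build an explicit feasible witness. Your first paragraph, showing that the naive witness $M^{(0)}_{ij} = -\chi(i - j)$ on off-diagonals gives only $\frac{\sqrt{3}}{2}\sqrt{p}$ (worse than Hanson--Petridis) even granting Conjecture~\ref{conj:min-eval} at $a = 2$, is a correct and useful observation that matches the $a = 2$ entry of Theorem~\ref{thm:clique-bound}. The richer family $M_\varphi$ you propose in the second paragraph is speculative; you are candid that it would require an edge-of-spectrum result of the flavor of Theorem~\ref{thm:edge-km} applied to a deformation of $(\bP_p, \bE_V)$, and you correctly flag via Section~\ref{sec:challenges-eval} why that is currently out of reach. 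The one thing to be careful about is the framing: you should not present the limiting free-probability optimization as if it were known to yield an optimum strictly below $1/\sqrt{2}$; the paper's own numerics (Figure~\ref{fig:lovasz-2}) suggest the bound does improve on Hanson--Petridis, but neither you nor the paper has a mechanism to extract a provable $\eps > 0$, and the paper specifically notes that the dual variables show no exploitable structure. In short, your assessment of the state of the problem is accurate; what you have is a roadmap to an open problem, which is exactly where the paper leaves it as well.
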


The natural approach to Conjecture~\ref{conj:lovasz2} is to build a feasible point for SDP dual to $\vartheta(G_{p, \{0, g\}})$.
We have not succeeded in finding a construction that is tractable to analyze theoretically.
The main difficulty seems to be precisely the lower amount of symmetry (or higher amount of pseudorandomness) in $G_{p, \{0, g\}}$ compared to $G_p$ and $G_{p, \{0\}}$; numerically, the dual variables at optimality do not seem to exhibit any particularly simple structure that we can easily exploit.

\section{Towards higher-degree estimates}
\label{app:higher-degree}

\subsection{One special degree 2 estimate}
\label{app:level2}

We give one example of a degree~2 necklace character sum that actually reduces to the degree~1 necklace character sums treated in Theorem~\ref{thm:level1}.
Unfortunately, this seems to be a special phenomenon thanks to the symmetry of this sum, which we do not expect to apply more generally.
\begin{theorem}
    \label{thm:level2}
    For any distinct $z, z^{\prime} \in \FF_p$,
    \begin{equation}
        |\Sigma(\{z, z^{\prime}\}, \dots, \{z, z^{\prime}\})| \leq kp^{(k + 1) / 2} + 2^k p^{k / 2} = O_k(p^{(k + 1) / 2}).
    \end{equation}
\end{theorem}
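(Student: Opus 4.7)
The plan is to reduce this degree-2 necklace sum to the degree-1 sum $\Sigma(\{0\}, \ldots, \{0\})$ already controlled by Theorem~\ref{thm:level1}, plus a smaller inclusion-exclusion correction. First I would normalize to $z = 0$ and $z' = 1$: translating $x_i \mapsto x_i + z$ leaves $\Sigma$ unchanged, and rescaling $x_i \mapsto (z' - z) x_i$ multiplies $\Sigma$ by $\chi(z' - z)^k = \pm 1$. Next I would apply the M\"{o}bius involution $u_i = v_i / (v_i - 1)$, a bijection of $\FF_p \setminus \{0, 1\}$ to itself. A direct computation using $\chi(-1) = 1$ (valid since $p \equiv 1 \Mod{4}$) and multiplicativity of $\chi$ yields $\chi(u_i(u_i - 1)) = \chi(v_i)$ and $\chi(u_{i + 1} - u_i) = \chi(v_{i + 1} - v_i) \cdot \chi((v_{i + 1} - 1)(v_i - 1))$. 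Each factor $\chi(v_i - 1)$ appears in exactly two consecutive edge terms of the cyclic product, so they square to $1$ whenever all $v_i \neq 1$. Since terms with some $u_i$ or $v_i$ in $\{0, 1\}$ automatically vanish from $\chi(0) = 0$, extending back to $v \in \FF_p^k$ yields
\[
\Sigma = \Sigma(\{0\}, \ldots, \{0\}) - R,
\]
where $R$ collects the contributions from tuples with at least one $v_j = 1$. By Theorem~\ref{thm:level1} (Approach~2), $|\Sigma(\{0\}, \ldots, \{0\})| \leq k p^{(k + 1)/2}$, which will furnish the first term in the claimed bound.

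To bound $R$, I would apply inclusion-exclusion over the events $\{v_j = 1\}$, giving $R = \sum_{\emptyset \neq S \subseteq [k]} (-1)^{|S| + 1} R_S$ where $R_S$ is the sum with $v_j = 1$ fixed for every $j \in S$. For $S$ containing two cyclically adjacent indices, $R_S$ vanishes because $\chi(v_j - v_{j - 1}) = \chi(0) = 0$. For admissible $S$ (no two adjacent), the cycle $[k]$ breaks into $|S|$ free segments of lengths $\ell_1, \ldots, \ell_{|S|}$ summing to $k - |S|$, and $R_S$ factors as $\prod_s P_{\ell_s}$, where
\[
P_\ell \colonequals \sum_{w_1, \ldots, w_\ell \in \FF_p} \chi(w_1 - 1) \chi(w_\ell - 1) \prod_{m = 1}^{\ell - 1} \chi(w_{m + 1} - w_m) \prod_{m = 1}^\ell \chi(w_m)
\]
is a ``path'' character sum.

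Finally I would bound each $P_\ell$ by adapting Approach~1 of the proof of Theorem~\ref{thm:level1}. The substitution $y_m = w_{m + 1}/w_m$ (with $w_1$ kept free) is a bijection $(\FF_p^\times)^\ell \to (\FF_p^\times)^\ell$ (with no spurious degree of freedom), after which the boundary factors may be combined and the multiplicative dependence on $Y = \prod_m y_m$ Fourier-expanded over $\what{\FF_p^\times}$; Proposition~\ref{prop:jacobi} then identifies $P_\ell$ with a sum of products of Jacobi sums of the form $\frac{1}{p - 1}\sum_{\psi} J(\psi, \chi)^\ell J(\chi\overline{\psi}, \chi)$, with the degenerate characters $\psi \in \{\eps, \chi\}$ contributing zero. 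Using only the trivial estimate $|J(\phi_1, \phi_2)| \leq \sqrt{p}$ term by term gives $|P_\ell| \leq p^{(\ell + 1)/2}$, so $|R_S| \leq \prod_s p^{(\ell_s + 1)/2} = p^{k/2}$, and summing over at most $2^k$ admissible $S$ yields $|R| \leq 2^k p^{k/2}$. The main obstacle is the Jacobi-sum derivation for $P_\ell$: unlike the closed cycle of Theorem~\ref{thm:level1}, the path lacks a continuous scaling symmetry, so the boundary factors $\chi(w_1 - 1)$ and $\chi(w_\ell - 1)$ must be paired carefully with the multiplicative structure introduced by the substitution, and one must verify that the degenerate characters really do contribute zero rather than a main term of size $p^{(\ell + 1)/2}$.
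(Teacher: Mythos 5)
Your proposal is correct, and the concern you raise at the end about the Jacobi-sum derivation for $P_\ell$ actually works out cleanly: one gets $P_\ell = \frac{1}{p-1}\sum_\psi J(\psi,\chi)^\ell J(\chi\overline{\psi},\chi)$, the terms $\psi\in\{\eps,\chi\}$ vanish because one of the two Jacobi sums is $J(\eps,\chi)=0$, and for all other $\psi$ both factors have modulus exactly $\sqrt p$ by Proposition~\ref{prop:jacobi}, giving $|P_\ell|\le p^{(\ell+1)/2}$ as needed. Your overall reduction differs from the paper's in two genuine respects. First, you normalize the two marked points to $\{0,1\}$ and use the \Mobius\ involution $u\mapsto u/(u-1)$ (which fixes $0$ and swaps $1\leftrightarrow\infty$), whereas the paper normalizes to $\{0,z\}$ and uses the inversion $x\mapsto x^{-1}$ (which swaps $0\leftrightarrow\infty$ and sends $z\mapsto z^{-1}$); both are $\mathrm{PGL}_2$ moves sending one of the two marked points to infinity, so they are variants of the same idea. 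Second, and more substantively, your remainder bound proceeds by inclusion--exclusion over which $v_j=1$, factorization into cyclic ``path'' sums $P_{\ell_s}$, and explicit Jacobi-sum estimates, while the paper simply writes the restricted sum as $\Tr(\widetilde{\bD}\bS)^k$ with $\bD=\chi(z)\be_0\be_0^\top+\widetilde{\bD}$, expands the binomial, and kills the cross terms with the one-line operator-norm bound $|\be_0^\top\bH\be_0|\le\|\bS\|^k=p^{k/2}$, giving $(2^k-1)p^{k/2}$ at once. Your approach produces essentially the same final bound (the admissible $S$ are independent sets in the $k$-cycle, of which there are $<2^k$, each contributing at most $\prod_s p^{(\ell_s+1)/2}=p^{k/2}$), but is considerably longer; on the other hand, it yields the perhaps independently interesting identity $P_\ell=\frac{1}{p-1}\sum_\psi J(\psi,\chi)^\ell J(\chi\overline{\psi},\chi)$ for path sums, which the operator-norm argument does not. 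Worth noting: your $P_\ell$ can also be bounded in one line as $P_\ell=\be_1^\top(\bS\bD)^\ell\bS\be_1$ with $\bD=\diag(\chi(x))_x$, so $|P_\ell|\le\|\bS\|^{\ell+1}=p^{(\ell+1)/2}$, which would let you bypass the Jacobi-sum step entirely and bring your proof closer in spirit to the paper's.
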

\begin{proof}
    Let us write $\Sigma \colonequals \Sigma(\{z, z^{\prime}\}, \dots, \{z, z^{\prime}\})$.
By changing variables $x_i \leftarrow x_i + z^{\prime}$ in $\Sigma$, we may assume without loss of generality that $z^{\prime} = 0$ (and thus $z \neq 0$).
We then have
\begin{align*}
  \Sigma
  &= \sum_{x_1, \dots, x_k \in \FF_p} \chi\big(x_1(x_1 - z)(x_1 - x_2) \cdots x_k(x_k - z)(x_k - x_1)\big) \\
  &= \sum_{x_1, \dots, x_k \in \FF_p^{\times}} \chi\big(x_1(x_1 - z)(x_1 - x_2) \cdots x_k(x_k - z)(x_k - x_1)\big) \\
  &= \sum_{x_1, \dots, x_k \in \FF_p^{\times}} \chi\big((1 - zx_1^{-1})(x_1^{-1} - x_2^{-1}) \cdots (1 - zx_k^{-1})(x_k^{-1} - x_1^{-1})\big) \\
  &= \chi(z)^k\sum_{x_1, \dots, x_k \in \FF_p^{\times}} \chi\big((z^{-1} - x_1)(x_1 - x_2) \cdots (z^{-1} - x_k)(x_k - x_1)\big) \numberthis
\end{align*}

This sum only differs by the restriction to $\FF_p^{\times}$ from the degree 1 necklace character sum
\begin{align*}
  \Sigma(\{z^{-1}\}, \dots, \{z^{-1}\})
  &= \sum_{x_1, \dots, x_k \in \FF_p}\chi\big((z^{-1} - x_1)(x_1 - x_2) \cdots (z^{-1} - x_k)(x_k - x_1)\big)
    \intertext{and our plan will be to bound the difference between these two. To that end, we view the above in linear-algebraic terms as}
  &= \Tr(\bD\bS)^k, \numberthis
\end{align*}
where $\bD \in \RR^{\FF_p \times \FF_p}$ is the diagonal matrix with diagonal entries $D_{xx} = \chi(z^{-1} - x)$ and $\bS$ is the $\{\pm 1\}$-valued adjacency matrix of $G_p$.
Let $\widetilde{\bD}$ be $\bD$ with the row and column indexed by $0 \in \FF_p$ set to zero.
Then, we have
\begin{equation}
    \bD = \chi(z)\be_0\be_0^{\top} + \widetilde{\bD}.
\end{equation}
Note that the sum we were originally interested in is
\begin{equation}
    \Sigma = \chi(z)^k \Tr(\widetilde{\bD}\bS)^k.
\end{equation}

As in our arguments in Section~\ref{sec:prf:thm:conj-equiv}, let us define
\begin{align}
  \bH_0 &\colonequals \chi(z)\be_0\be_0^{\top}, \\
  \bH_1 &\colonequals \widetilde{\bD}.
\end{align}
Then, we have
\begin{equation}
  \Sigma(\{z^{-1}\}, \dots, \{z^{-1}\}) = \chi(z)^k\Sigma + \sum_{\substack{s_1, \dots, s_k \in \{0, 1\} \\ \text{some } s_i = 0}} \Tr(\bH_{s_1} \cdots \bH_{s_k}).
\end{equation}
Rearranging, we have
\begin{align*}
  |\Sigma|
  &\leq |\Sigma(\{z^{-1}\}, \dots, \{z^{-1}\})| + \sum_{\substack{s_1, \dots, s_k \in \{0, 1\} \\ \text{some } s_i = 0}} |\Tr(\bH_{s_1}\bS \cdots \bH_{s_k}\bS)|
  \intertext{Here, the first quantity is bounded by Theorem~\ref{thm:level1}. Each term in the second sum may be written $|\be_0^{\top} \bH \be_0| \leq \|\bH\|$ for some $\bH$ with $\|\bH\| \leq \|\bS\|^{k} \leq p^{k / 2}$. Thus, we find}
  &\leq kp^{(k + 1) / 2} + 2^k p^{k / 2}, \numberthis
\end{align*}
where we have substituted in the explicit bound \eqref{eq:level1-explicit-bound} from our proof of Theorem~\ref{thm:level1}, completing the proof.
\end{proof}

\subsection{General reduction to Gauss sum polynomials}

We mention one approach that might prove fruitful for the general case of Conjecture~\ref{conj:char-sum}, which generalizes our first approach to the proof of Theorem~\ref{thm:level1}.
We will show that we can rewrite a general necklace character sum as a large polynomial in Jacobi sums, and therefore in angles of Gauss sums.

The following standard result expresses that the Jacobi sums are the multiplicative Fourier transform of additive translation of a multiplicative character.
\begin{proposition}
    \label{prop:jacobi-ft}
    For all $x \in \FF_p^{\times}$,
    \begin{equation}
        \phi(1 - x) = \frac{1}{p - 1}\sum_{\psi \in \what{\FF_p^{\times}}} J(\phi, \psi) \overline{\psi}(x).
    \end{equation}
\end{proposition}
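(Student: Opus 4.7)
The plan is to prove this identity as a direct Fourier-type inversion, treating the Jacobi sums as the multiplicative Fourier transform of the function $x \mapsto \phi(1-x)$ restricted to $\FF_p^\times$. The tool is the character orthogonality relation already recorded in Proposition~\ref{prop:char-fourier}, which gives $\frac{1}{p-1}\sum_{\psi \in \what{\FF_p^\times}} \psi(u) = \One\{u = 1\}$ for all $u \in \FF_p$ (keeping in mind the convention $\psi(0) = 0$, which makes the $u = 0$ case also work).

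The first step would be to substitute the definition $J(\phi, \psi) = \sum_{y \in \FF_p} \phi(y) \psi(1-y)$ into the right-hand side and swap the order of summation, yielding
\begin{equation}
    \frac{1}{p-1}\sum_{\psi \in \what{\FF_p^\times}} J(\phi, \psi)\, \overline{\psi}(x) = \sum_{y \in \FF_p} \phi(y) \cdot \frac{1}{p-1} \sum_{\psi \in \what{\FF_p^\times}} \psi(1-y)\overline{\psi}(x).
\end{equation}
Next, I would use multiplicativity to combine $\psi(1-y)\overline{\psi}(x) = \psi((1-y) x^{-1})$ whenever $1 - y \neq 0$ (when $1 - y = 0$, the factor $\psi(1-y)$ vanishes for every $\psi$ by the convention, so those terms contribute nothing). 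Since we have assumed $x \in \FF_p^\times$, the inversion $x^{-1}$ is well-defined, and Proposition~\ref{prop:char-fourier} then collapses the inner sum to $\One\{(1-y)x^{-1} = 1\} = \One\{y = 1 - x\}$. Summing out $y$ leaves exactly $\phi(1-x)$.

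The only mild technicality is the case $x = 1$, where the substitution $y = 1 - x = 0$ would formally pick up the term $\phi(0) = 0$, matching the left-hand side $\phi(1-1) = \phi(0) = 0$. It is cleanest to note that in this case the inner sum reduces to the orthogonality relation at $u = 1 - y$, picking out $y = 0$, and $\phi(0) = 0$ ensures the identity still holds.

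There is no real obstacle here: the identity is essentially the statement that, for fixed $\phi$, the map $\psi \mapsto J(\phi, \psi)$ is (up to the normalization $\frac{1}{p-1}$) the multiplicative Fourier transform of $y \mapsto \phi(1-y)$ on $\FF_p^\times$, so the stated formula is just its inversion. The only point worth being careful about is consistent use of the convention $\psi(0) = 0$ so that the formal manipulations remain valid when $1 - y$ or $x$ touches $0$.
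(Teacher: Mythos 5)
The paper states this proposition as a standard fact and does not supply a proof, so there is no in-paper argument to compare against. Your proof is correct: substituting the definition of $J(\phi,\psi)$, swapping the sums, combining $\psi(1-y)\overline{\psi}(x)=\psi((1-y)x^{-1})$ for $x \in \FF_p^\times$ (consistently with the convention $\psi(0)=0$), and applying the orthogonality relation of Proposition~\ref{prop:char-fourier} collapses the sum to the single term $y = 1-x$, giving $\phi(1-x)$. Your handling of the boundary case $x=1$ (where both sides vanish via $\phi(0)=0$) is also right, so this is a complete and clean Fourier-inversion proof of the stated identity.
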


Suppose we are interested in controlling a general necklace character sum
\begin{equation}
    \Sigma(Z_1, \dots, Z_k) = \sum_{x_1, \dots, x_k \in \FF_p} \chi(x_2 - x_1) \cdots \chi(x_k - x_{k - 1}) \chi(x_1 - x_k) \prod_{i = 1}^k \prod_{z \in Z_i} \chi (x_i - z).
\end{equation}
For constant degree $a$, since $|Z_1 \cup \cdots \cup Z_k| \leq a$, by a suitable change of variables we may assume without loss of generality that $0 \notin Z_i$ for all $i$.
By the argument from Theorem~\ref{thm:level2}, it also suffices to control the restriction to $x_i \in \FF_p^{\times}$, so we will look at the quantity
\begin{equation}
    \widetilde{\Sigma} \colonequals \sum_{x_1, \dots, x_k \in \FF_p^{\times}} \chi(x_2 - x_1) \cdots \chi(x_k - x_{k - 1}) \chi(x_1 - x_k) \prod_{i = 1}^k \prod_{z \in Z_i} \chi (x_i - z).
\end{equation}

Let us define $z_{ij}$ such that $Z_i = \{z_{i1}, \dots, z_{im_i}\}$ for $m_i = |Z_i|$.
We will reserve the indices $i, j$ for enumerating these variables below.
We also assume all index arithmetic is performed modulo $k$.
We apply Proposition~\ref{prop:jacobi-ft} first to each of the $\chi(x_b - z)$ terms and then to each of the $\chi(x_b - x_{b - 1})$, obtaining
\begin{align*}
  \widetilde{\Sigma}
  &= \prod_{i, j} \chi(z_{ij})\frac{1}{(p - 1)^{\sum_{i = 1}^k m_i}}\sum_{\psi_{ij} \in \what{\FF_p^{\times}}}\sum_{x_1, \dots, x_k \in \FF_p^{\times}} \prod_{b = 1}^k \chi(x_b - x_{b - 1}) \prod_{i, j} J(\chi, \psi_{ij}) \overline{\psi_{ij}}(z_{ij}^{-1}x_i) \\
  &= \prod_{i, j} \chi(z_{ij})\frac{1}{(p - 1)^{k + \sum m_i}}\sum_{\psi_{ij} \in \what{\FF_p^{\times}}}\sum_{\phi_1, \dots, \phi_k \in \what{\FF_p^{\times}}} \sum_{x_1, \dots, x_k \in \FF_p^{\times}} \\
  &\hspace{2cm} \prod_{b = 1}^k \overline{\phi_b}(x_bx_{b - 1}^{-1})\chi(x_b) J(\chi, \phi_b) \prod_{i, j} J(\chi, \psi_{ij}) \overline{\psi_{ij}}(z_{ij}^{-1}x_i) \\
  &= \prod_{i, j} \chi(z_{ij})\frac{1}{(p - 1)^{k + \sum m_i}}\sum_{\psi_{ij} \in \what{\FF_p^{\times}}} \sum_{\phi_1, \dots, \phi_k \in \what{\FF_p^{\times}}} \\
  &\hspace{2cm} \prod_{i, j} \psi_{ij}(z_{ij}) J(\chi, \psi_{ij}) \prod_{b = 1}^k J(\chi, \phi_b) \left(\sum_{x \in \FF_p^{\times}} \left(\overline{\phi_b}\phi_{b - 1}\chi \prod_{j = 1}^{m_b} \overline{\psi_{bj}}\right) (x)\right)
    \intertext{Now, each remaining sum is zero unless $\phi_{b - 1} = \phi_b \chi \prod_{j = 1}^{m_b} \psi_{bj}$, in which case it is $p - 1$. Thus, we have}
  &= \prod_{i, j} \chi(z_{ij})\frac{1}{(p - 1)^{\sum m_i}} \sum_{\phi \in \what{\FF_p^{\times}}} \sum_{\substack{\psi_{ij} \in \what{\FF_p^{\times}} \\ \prod_{i, j} \psi_{ij} = \chi^k}}  \\
  &\hspace{2cm} \prod_{i, j} \psi_{ij}(z_{ij}) J(\chi, \psi_{ij}) \prod_{b = 0}^{k - 1}J\left(\chi, \phi\chi^b \prod_{1 \leq i \leq b} \prod_{j = 1}^{m_i} \psi_{ij}\right). \numberthis
\end{align*}
Finally, we may use Proposition~\ref{prop:jacobi} on each Jacobi sum to obtain a large sum of products of angles of Gauss sums, albeit ``twisted'' by the character evaluations $\psi_{ij}(z_{ij})$.
(We may handle the constraint in our summation by setting, say, $\psi_{11} = \chi^k \prod_{(i, j) \neq (1, 1)} \overline{\psi_{ij}}$, though this would break the appealing symmetry of our expression above.)
That detail aside, this is quite similar to the situation treated by Katz's bound from Theorem~\ref{thm:katz-monomial-bound}, except that we have a ``multi-dimensional'' average of angles of Gauss sums, summing over $\sum_{i = 1}^k |Z_i| - 1$ many multiplicative characters $\psi_{ij}$ instead of just one.
Unfortunately, to the best of our knowledge, Katz's bound has not been generalized to this setting.

\section{Limitations on restricted isometry property bounds}
\label{app:rip}

Up to rescaling, the matrices $\bP_p^{(\pm)}$ are the Gram matrices of \emph{unit norm tight frames} associated to the Paley graph, and very close to the Gram matrices of the \emph{Paley equiangular tight frame~(ETF)}, which are rescaled projection matrices with one more dimension and with all off-diagonal entries having equal magnitude.
The work \cite{BFMW-2013-DeterministicRIP} investigated the \emph{restricted isometry property (RIP)} for these matrices, which amounts to showing that all small principal submatrices are uniformly close to a multiple of the identity.
The reader may consult their exposition and especially their Theorem~23 for further details on this connection.

This is a similar question to the one we have considered, except that it asks to consider the spectrum of \emph{arbitrary} submatrices of $\bP^{(\pm)}_p$ (or, by Proposition~\ref{prop:paley-spectral} equivalently, of $\bA_{G_p}$) rather than the special submatrices corresponding to localizations or, per Remark~\ref{rem:low-deg}, general low-degree subgraphs.
In this appendix, we give an example showing that our quite precise conjectures about the spectra of localizations \emph{do not} in general extend to arbitrary induce subgraphs.
While the size of subgraph or submatrix we consider, of order $\Theta(p)$, is not the same as the smaller $o(p)$ submatrices of interest for RIP, this gives some evidence that our approach may not be the correct one for studying RIP for Paley ETFs.

We will produce a suitable induced subgraph of $G_p$ that behaves very differently from the $G_{p, I}$ of similar size.
Let $F_p$ be the induced subgraph on the ``quartic residues'' or non-zero fourth powers modulo $p$.
If $g$ is a multiplicative generator of $\FF_p^{\times}$, these are $g^4, g^8, \dots, g^{p - 1} = 1$.
Thus $|V(F_p)| = \frac{p - 1}{4}$, so $F_p$ has as many vertices as a degree 2 localization of $G_p$, all of which are isomorphic to $G_{p, \{0, g\}}$.

Conveniently, as in the discussion of $G_{p, \{0\}}$ in Section~\ref{sec:equidistribution-level1}, $\bA_{F_p}$ is circulant when the vertices are ordered as above, since $\chi(g^{4b} - g^{4a}) = \chi(g^{4(b - a)} - 1)$, only depending on the value of $b - a$ modulo $\frac{p - 1}{4}$.
Thus, again the eigenvalues may be computed directly by a Fourier transform,
\begin{equation}
    \sum_{j = 0}^{(p - 1) / 4 - 1} \One\{g^{4j} - 1 \in \SS_p\}\, e_{p - 1}(4aj) \text{ for } a = 1, \dots, \frac{p - 1}{4}.
\end{equation}
Each function $g^j \mapsto e(\frac{aj}{p - 1})$ is a multiplicative character of $\FF_p^{\times}$, so we may equivalently express the eigenvalues as a small correction of the character sums
\begin{equation}
    S(\chi, \psi) \colonequals \sum_{y \in \FF_p} \chi(y^4 - 1) \psi(y^4) \text{ for } \psi \in \what{\FF_p^{\times}},
\end{equation}
where each eigenvalue will be counted four times when these are computed over all $\psi \in \what{\FF_p^{\times}}$.

In Figure~\ref{fig:rip}, we illustrate that the e.s.d.\ of $F_p$ has a very different-looking shape from that of $G_{p, \{0, g\}}$.
We also use the above observations to compute the minimum eigenvalue for large $p$, and show that it is typically \emph{smaller} (that is, more negative, by a leading order $\Theta(\sqrt{p})$ amount) than the prediction of Conjecture~\ref{conj:min-eval}.
We illustrate this in Figure~\ref{fig:rip-min-eval}: $-\lambda_{\min}(\bA_{F_p})$ clearly exceeds the prediction of Conjecture~\ref{conj:min-eval} of $\frac{\sqrt{3}}{4}\sqrt{p}$, and appears instead to scale with only very small fluctuations as $\frac{1}{2}\sqrt{p}$.
The following thus seems very plausible.

\begin{conjecture}
    We have
    \begin{equation}
        \lim_{p \to \infty} \frac{\lambda_{\min}(\bA_{F_p})}{\sqrt{p}} = -\frac{1}{2}.
    \end{equation}
\end{conjecture}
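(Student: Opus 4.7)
The plan is to diagonalize $\bA_{F_p}$ via its circulant structure and reduce the conjecture to a family of Weil character sums. Listing the vertices of $F_p$ in the order $g^0, g^4, \ldots, g^{p-5}$, the adjacency matrix is symmetric circulant (the required symmetry $c_j = c_{-j}$ uses $\chi(-1) = 1$, which holds since $p \equiv 1 \Mod{4}$). Applying the discrete Fourier transform and expanding $\One\{g^{4j} - 1 \in \SS_p\} = \tfrac{1}{2}(1 + \chi(g^{4j} - 1))$, each non-trivial eigenvalue takes the form
\begin{equation*}
    \lambda(\phi) = -\tfrac{1}{2} + \tfrac{1}{8} S(\phi), \qquad S(\phi) \colonequals \sum_{y \in \FF_p} \chi(y^4 - 1)\, \phi(y),
\end{equation*}
where $\phi$ runs over the $(p-1)/4 - 1$ non-trivial fourth-power characters of $\FF_p^{\times}$. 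Using $p \equiv 1 \Mod{4}$, we factor $y^4 - 1 = (y - 1)(y + 1)(y - i)(y + i)$ for $i \in \FF_p$ with $i^2 = -1$, so $S(\phi)$ is a mixed character sum with ramification at the five points $\{1, -1, i, -i, 0\}$ and respective characters $(\chi, \chi, \chi, \chi, \phi)$. Since the trivial eigenvalue is a positive degree count, $\lambda_{\min}(\bA_{F_p}) = \min_\phi \lambda(\phi)$, and the conjecture reduces to showing $\min_\phi S(\phi)/\sqrt{p} \to -4$.

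The ``easy'' direction $\liminf_p \lambda_{\min}(\bA_{F_p})/\sqrt{p} \geq -\tfrac{1}{2}$ would follow by applying Weil's bound to $S(\phi)$ uniformly. For $\phi \neq \eps$, the product character ``at infinity'' is $\chi^4\phi = \phi \neq \eps$, so the associated Kummer-type sheaf is everywhere non-trivial and the mixed-character form of Weil's theorem (see \cite[Ch.\ 11]{IK-2021-AnalyticNumberTheory}) yields $|S(\phi)| \leq 4\sqrt{p} + O(1)$; for $\phi = \eps$, the polynomial $y^4 - 1$ is squarefree, and Theorem~\ref{thm:weil} gives the same bound directly. Hence $\lambda_{\min}(\bA_{F_p}) \geq -\tfrac{1}{2}\sqrt{p} - o(\sqrt{p})$.

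The matching ``hard'' direction $\limsup_p \lambda_{\min}(\bA_{F_p})/\sqrt{p} \leq -\tfrac{1}{2}$ requires producing, for each sufficiently large $p$, some $\phi_{\star}$ with $S(\phi_{\star}) \leq -4\sqrt{p} + o(\sqrt{p})$. The natural route is a Deligne-Katz style equidistribution theorem applied to the parametric family of sheaves $\mathcal{L}_{\chi(y^4 - 1)} \otimes \mathcal{L}_{\phi(y)}$ as $\phi$ varies: one checks directly that $\overline{S(\phi)} = S(\overline{\phi}) = S(\phi)$, so $S(\phi) \in \RR$, which is compatible with a limiting empirical measure given by the pushforward of Haar measure on a real form of a rank-4 classical group (orthogonal or symplectic) under the trace map, supported on $[-4, 4]$. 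Since $\Theta(p)$ distinct characters are sampled against this measure, the minimum should converge to the left edge $-4$. The main obstacle is to identify this monodromy group explicitly and verify that its trace distribution actually reaches $-4$; this requires non-trivial algebraic-geometric analysis specific to this one-parameter family of sheaves. A more elementary (though laborious) alternative is a moment method analogous to our proof of Theorem~\ref{thm:level1}: compute the empirical moments $\tfrac{4}{p - 1}\sum_\phi (S(\phi)/\sqrt{p})^{k}$ for each fixed $k \geq 1$ by Fourier-expanding all characters into Gauss sums and bounding the resulting polynomials via Katz's Theorem~\ref{thm:katz-monomial-bound}; matching these moments against those of a candidate limiting measure and applying a Stieltjes/Markov-type extremal argument would then yield the required $\phi_{\star}$.
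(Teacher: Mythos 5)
The statement you were asked to prove is a \emph{conjecture} in the paper: Appendix~\ref{app:rip} states it alongside numerical evidence (Figure~\ref{fig:rip-min-eval}) but does not prove it, so there is no paper proof to compare against. Your proposal correctly reproduces the paper's reduction: writing the eigenvalues via the circulant structure as affine functions of the character sums $S(\phi) = \sum_y \chi(y^4 - 1)\phi(y)$ matches the paper's expression $S(\chi, \psi) = \sum_y \chi(y^4 - 1)\psi(y^4)$ upon identifying $\phi = \psi^4$ with the fourth-power characters, and your normalization $\lambda(\phi) = -\tfrac12 + \tfrac18 S(\phi)$ is right. Your ``easy direction'' is a genuine contribution beyond what the paper records explicitly: the Euler--Poincar\'e count for the rank-one sheaf $\mathcal{L}_{\chi(y^4-1)}\otimes\mathcal{L}_{\phi(y)}$ with the six ramification points $\{0, \pm1, \pm i, \infty\}$ (noting $\chi^4 = \eps$ so $\infty$ is ramified only through $\phi$) does give $h^1_c = 4$, hence $|S(\phi)| \leq 4\sqrt{p}$ and $\liminf_p \lambda_{\min}(\bA_{F_p})/\sqrt{p} \geq -\tfrac12$. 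This explains why the observed value in Figure~\ref{fig:rip-min-eval} hugs $-\tfrac12\sqrt{p}$ from above.

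The gap is in the ``hard direction,'' and it is exactly the obstruction the paper flags for its own Conjecture~\ref{conj:min-eval} in Section~\ref{sec:challenges-eval}. Even a complete Deligne--Katz equidistribution theorem for the family $\{S(\phi)\}$---giving weak convergence of the empirical measure of $S(\phi)/\sqrt{p}$ to a law supported on $[-4, 4]$---does \emph{not} by itself force $\min_\phi S(\phi)/\sqrt{p} \to -4$. Weak convergence controls bulk statistics, not extremes: a vanishing fraction of the $(p-1)/4 - 1$ values could stay bounded away from $-4$. To conclude edge convergence you would need, as the paper explains via Theorem~\ref{thm:edge-km}, quantitative control of $\Theta(\log p)$-many moments with only sub-exponential multiplicative error, which is a far stronger statement than the fixed-$k$ moment estimates that suffice for weak convergence (and which Katz's Theorem~\ref{thm:katz-monomial-bound} would supply). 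Your phrase ``the minimum should converge to the left edge'' therefore elides the central difficulty rather than resolving it. The moment-method alternative you mention has the same issue: matching fixed moments against a candidate law proves weak convergence only, and the ``Stieltjes/Markov-type extremal argument'' you invoke would require uniformity in $k$ growing with $p$ that is not established. In short, the setup and the one-sided bound are correct and in the spirit of the paper, but the conjecture remains open exactly where the proposal is vague.
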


\begin{remark}
    Despite this counterexample, it is not out of the question that the combinatorial techniques developed to prove Theorem~\ref{thm:edge-km} in \cite{Kunisky-2023-MANOVAProductProjections} could still be used to prove RIP for frames related to the Paley graph, especially since the same calculations can be carried out even for pairs of projections that are not necessarily asymptotically free.
\end{remark}

\clearpage
\begin{figure}
    \begin{center}
        \begin{minipage}{0.47\textwidth}
            \includegraphics[scale=0.5]{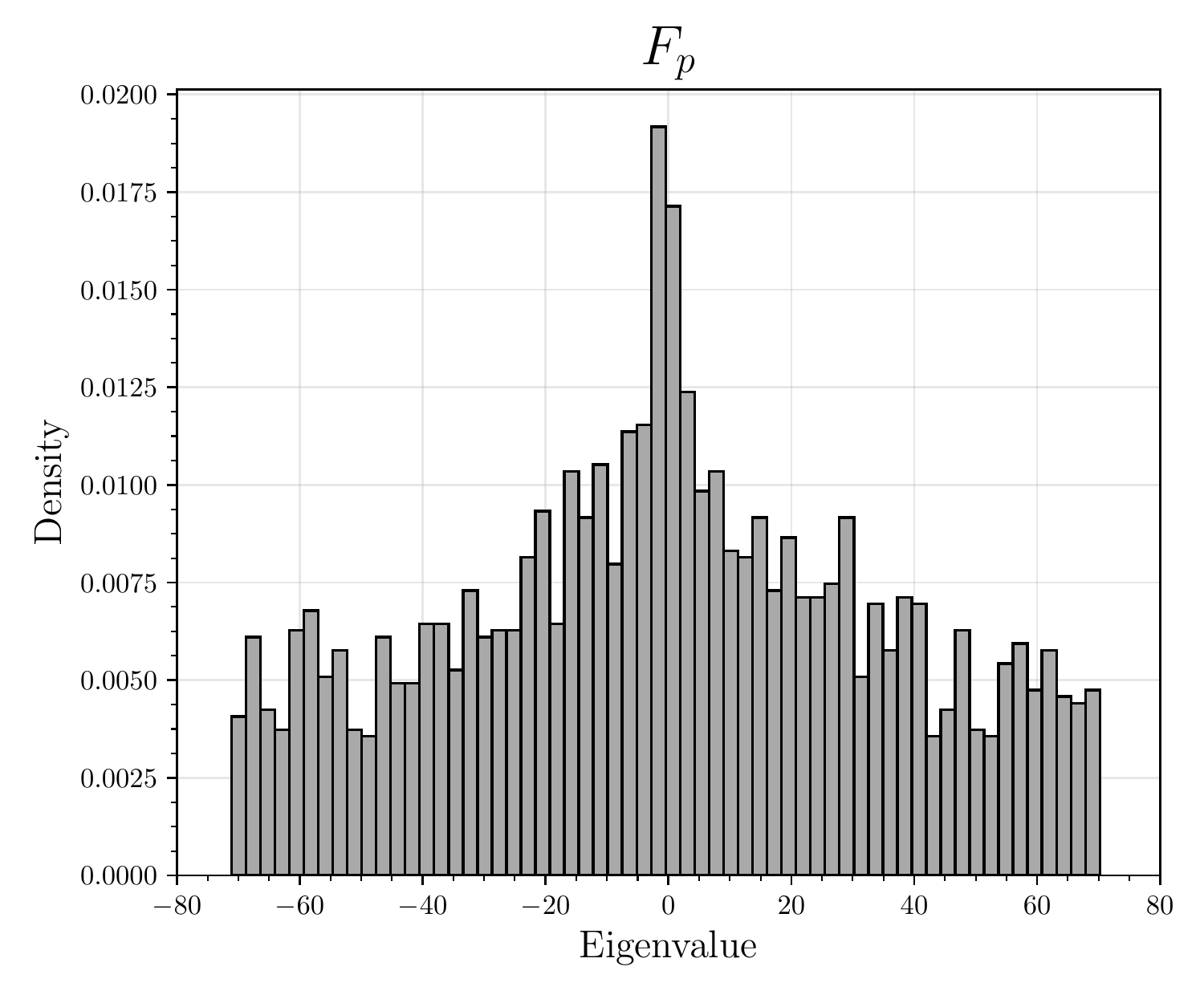}
        \end{minipage}
        \begin{minipage}{0.47\textwidth}
            \includegraphics[scale=0.5]{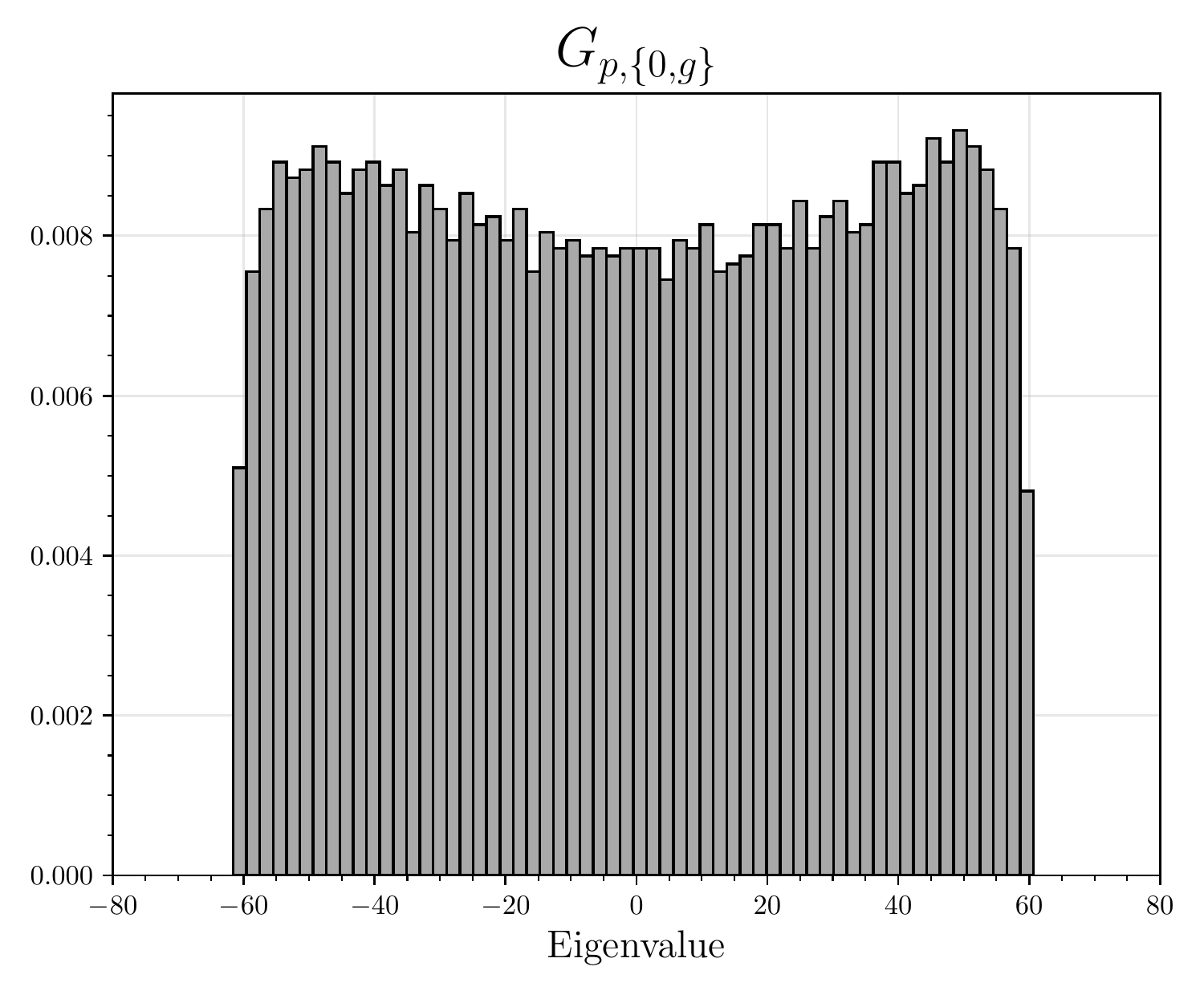}
        \end{minipage}
    \end{center}

    \caption{\textbf{Induced subgraph with non-Kesten-McKay e.s.d.} We compare the empirical spectral distribution of the induced subgraph $F_p$ of $G_p$ discussed in Appendix~\ref{app:rip} with that of $G_{p, \{0, g\}}$ for $p = \num[group-separator={,}]{20021}$.}
    \label{fig:rip}
\end{figure}

\begin{figure}
    \begin{center}
        \includegraphics[scale=0.6]{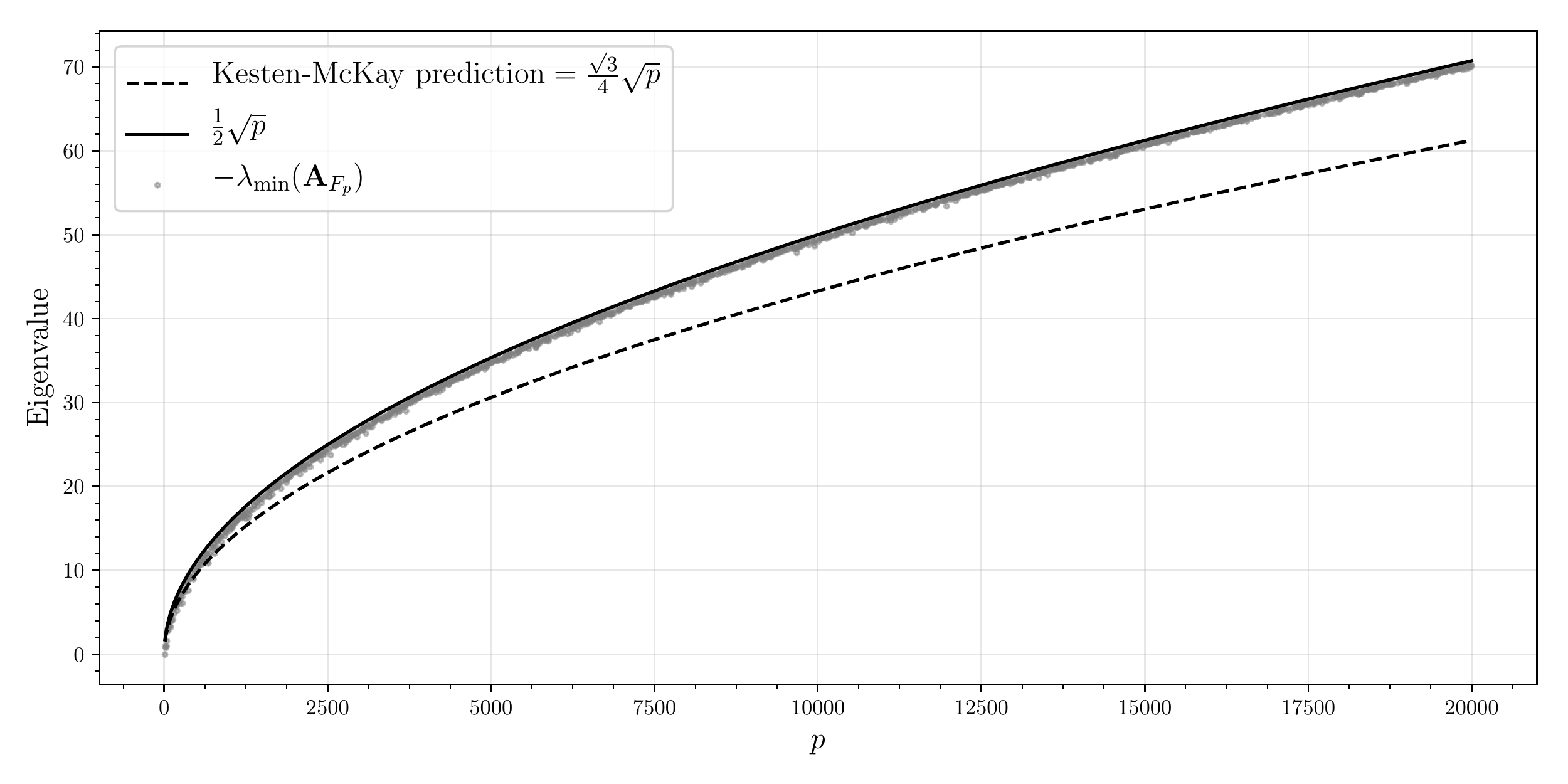}
    \end{center}

    \caption{\textbf{Minimum eigenvalue of $F_p$ subgraph.} We plot the minimum eigenvalue of the induced subgraph $F_p$ of $G_p$ discussed in Appendix~\ref{app:rip} together with the prediction of Conjecture~\ref{conj:min-eval} (for subgraphs induced by localization, unlike $F_p$) and a conjectural larger scaling of $-\frac{1}{2}\sqrt{p}$ that this minimum eigenvalue actually appears to obey closely.}
    \label{fig:rip-min-eval}
\end{figure}

\end{document}